\theoremstyle{plain}
\newtheorem{theorem}[equation]{Theorem}
\newtheorem{corollary}[equation]{Corollary}
\newtheorem{lemma}[equation]{Lemma}
\newtheorem{proposition}[equation]{Proposition}
\theoremstyle{definition}
\newtheorem{defi}[equation]{Definition}
\newtheorem{remark}[equation]{Remark}
\newcommand{\mydia}{\hfill $\Diamond$}
\newcommand{\assPro}[1]{% arg1: Symbol of the process
	\ensuremath{\langle #1 \rangle}}
\newcommand{\ind}{\text{\large{$\mathds{1}$}}}
\newcommand{\myd}{\mbox{\upshape d}}
\long\def\symbolfootnote[#1]#2{\begingroup%
\def\thefootnote{\fnsymbol{footnote}}\footnote[#1]{#2}\endgroup}
\numberwithin{equation}{section}
\title{One-dimensional Stochastic Differential Equations \\ with Generalized and Singular Drift\thanks{Work supported in part by the European Community's FP 7 Programme under contract PITN-GA-2008-213841, Marie Curie ITN "Controlled Systems".}}
\author{Stefan Blei \\ stefan.blei@uni-jena.de \and Hans-J\"urgen Engelbert \\ hans-juergen.engelbert@uni-jena.de}
\date{Friedrich-Schiller-Universit\"at Jena, \\ Fakult\"at f\"ur Mathematik und Informatik, \\ Institut f\"ur Stochastik, \\ D-07743 Jena, Germany \\[2ex] \today}
\begin{document}
\maketitle
\hrule 
\begin{abstract}
\noindent Introducing certain singularities, we generalize the class of one-dimensional stochastic differential equations with so-called generalized drift. Equations with generalized drift, well-known in the literature, possess a drift that is described by the semimartingale local time of the unknown process integrated with respect to a locally finite signed measure $\nu$. The generalization which we deal with can be interpreted as allowing more general set functions $\nu$, for example signed measures which are only $\sigma$-finite. However, we use a different approach to describe the singular drift. For the considered class of one-dimensional stochastic differential equations, we derive necessary and sufficient conditions for existence and uniqueness in law of solutions.
\\[2ex]
\emph{Keywords:} Singular stochastic differential equations, local times, generalized drift, singular drift, uniqueness in law, space transformation, Bessel process, Bessel equation\\[1ex]
\emph{2010 MSC:} 60H10, 60J55 
\end{abstract}
\hrule
\section{Introduction and Preliminaries}
\noindent Throughout this paper, $(\Omega, \mathcal{F}, \mathbf{P})$ stands for a complete probability space equipped with a filtration $\mathbb{F} = (\mathcal{F}_t)_{t \geq 0}$ which satisfies the usual conditions, i.e., $\mathbb{F}$ is right-continuous and $\mathcal{F}_0$ contains all sets from $\mathcal{F}$ which have $\mathbf{P}$-measure zero. For a process $X = (X_t)_{t \geq 0}$ the notation $(X,\mathbb{F})$ indicates that $X$ is $\mathbb{F}$-adapted. The processes considered in the following belong to the class of continuous semimartingales up to a stopping time $S$ and local times of such processes will play an important role. Therefore, in the Appendix we summarize some facts about continuous semimartingales up to a stopping time $S$ and their local times, which we use in the sequel. By writing (A.), we refer to a formula or a result of the Appendix.\\
\indent In the present paper, our purpose is to investigate one-dimensional stochastic differential equations (SDEs) with generalized and singular drift in the framework of continuous semimartingales. In general, this kind of SDEs admits exploding solutions. Therefore, the convenient state space is the extended real line $\overline{\mathbb{R}} = \mathbb{R}\cup \{-\infty, +\infty\}$ equipped with the $\sigma$-algebra $\mathscr{B}(\overline{\mathbb{R}})$ of Borel subsets. \\
\indent SDEs with \emph{generalized and singular drift} are of the form
\begin{equation}\label{eqn:SDE_mvasd}
	X_t = X_0 + \int_0^t b(X_s) \, \myd B_s + \int_\mathbb{R} L_m^X(t,y) \, \myd f(y)\,,
\end{equation}
where $b$ is a measurable real function and $B$ denotes a Wiener processes. Furthermore, the function $f$ which appears as the integrator in the drift of Eq. (\ref{eqn:SDE_mvasd}) is assumed to be non-negative, right-continuous and of locally bounded variation such that its reciprocal $1/f$ is locally integrable.\footnote{Here and in the following we use the convention $1/a = +\infty$ if $a=0$.} We call a function $f$ with these properties a \emph{drift function}. Moreover, $L_m^X$ denotes a certain local time of the unknown process $X$ specified in
\begin{defi}\label{def:solution}
	A continuous $(\overline{\mathbb{R}}, \mathscr{B}(\overline{\mathbb{R}}))$-valued stochastic process $(X,\mathbb{F})$ 
	defined on a probability space $(\Omega,\mathcal{F},\mathbf{P})$ is called a solution of Eq. (\ref{eqn:SDE_mvasd})
	if the following conditions are fulfilled: \medskip
	
	(i)   $X_0$ is real-valued.\medskip
	
  (ii)  $X_t = X_{t \wedge S_\infty^X}$, $t \geq 0$,	$\mathbf{P}$-a.s., 
								 where $S_\infty^X := \inf\{t \geq 0:|X_t| = +\infty\}$.\footnote{$\inf \emptyset = +\infty$.}\medskip
								 
	(iii) $(X,\mathbb{F})$ is a semimartingale up to $S_\infty^X$.\medskip
	
	(iv)  There exists a random function $L^X_m$ on $[0,S_\infty^X)\times \mathbb{R}$ into $[0,+\infty)$ that is a version of the local time of $X$ 
				defined on $\{t<S_\infty^X\}$ in the sense of an occupation times density with respect to the measure $m(\myd x) := 2\,f(x)\, \myd x$, i.e.,
				\[
					\int_0^t h(X_s) \, \myd \assPro{X}_s = \int_{\mathbb{R}} h(x) \, L_m^X(t,x)\,m(\myd x)\,, \qquad t < S_\infty^X, 
					\ \mathbf{P}\text{-a.s.}
				\]
				holds for all non-negative measurable functions $h$. Thereby, $L_m^X(\, . \, ,x)$ is $\mathbb{F}$-adapted for all $x \in \mathbb{R}$. 
				Moreover, $L_m^X$ is $\mathbf{P}$-a.s. continuous and increasing in $t$ as well as right-continuous in $x$ with limits from the left.\medskip
								 
	(v)	 There exists a Wiener process $(B,\mathbb{F})$ such that Eq. (\ref{eqn:SDE_mvasd}) is satisfied for all $t < S_\infty^X$ $\mathbf{P}$-a.s.
\end{defi}
\begin{defi}\label{def:uniqueness}
	We say that the solution of Eq. (\ref{eqn:SDE_mvasd}) (or of any other SDE appearing in the sequel) is unique 
	in law if any two solutions 
	$(X^1,\mathbb{F}^1)$ and $(X^2,\mathbb{F}^2)$ with coinciding initial distributions defined on the probability spaces 
	$(\Omega^1,\mathcal{F}^1,\mathbf{P}^1)$ and $(\Omega^2,\mathcal{F}^2,\mathbf{P}^2)$, respectively, possess the same image law on the space 
	$C_{\overline{\mathbb{R}}}([0,+\infty))$ of continuous functions defined on $[0,+\infty)$ and taking values in $\overline{\mathbb{R}}$.
\end{defi}
\indent The $\mathbb{F}$-stopping time $S^X_\infty$ in Definition \ref{def:solution} is called the \emph{explosion time} of $X$. To distinguish between $L_m^X$ and the (right) local time $L_+^X$, defined via (\ref{eqn:gen_ito_formula}), we also use the expression \emph{(right) semimartingale local time} when we refer to $L_+^X$. \\
\indent The idea to introduce such a local time $L_m^X$ in the context of SDEs for Dirichlet processes goes back to H.J. Engelbert and J. Wolf \cite{engelbertwolf}. Moreover, it was already used by S. Blei \cite{blei_bessel_2011} as a helpful tool in the investigation of an SDE for the $\delta$-dimensional Bessel process for $\delta \in (1,2)$, which turns out to be an important example of an equation of type (\ref{eqn:SDE_mvasd}) (see Section \ref{sec:skew_solutions}).\\
\indent By introducing certain singularities, the class of SDEs of the form (\ref{eqn:SDE_mvasd}) generalizes the class of SDEs with so-called generalized drift. SDEs with generalized drift have the following structure:
\begin{equation}\label{eqn:SDE_mvd}
	X_t = X_0 + \int_0^t b(X_s) \, \myd B_s + \int_\mathbb{R} L_+^X(t,y) \, \nu(\myd y)\,,
\end{equation}
where $b$ is a measurable real function and $\nu$ denotes a set function defined on the bounded Borel sets of the real line $\mathbb{R}$ such that it is a finite signed measure on $\mathscr{B}([-N,N])$ for every $N \in \mathbb{N}$. In this equation $B$ stands again for a Wiener processes and $L_+^X$ denotes the right semimartingale local time of the unknown process $X$. Omitting condition (iv), the notion of a solution of Eq. (\ref{eqn:SDE_mvd}) is introduced analogously to Definition \ref{def:solution}. Clearly, equations without drift
\begin{equation}\label{eqn:SDE_without_drift}
	Y_t = Y_0 + \int_0^t \sigma(Y_s) \, \myd B_s\,,
\end{equation}
where $\sigma$ is as well a measurable real function, are a special kind of Eq. (\ref{eqn:SDE_mvd}). \\
\indent SDEs of type (\ref{eqn:SDE_mvd}) with generalized drift have been studied previously by many authors. We refer the reader to Harrison and Shepp \cite{harrison_shepp}, N.I. Portenko \cite{portenko}, D.W. Stroock and M. Yor \cite{stroock_yor} and J.F. Le Gall \cite{LeGall_1983}, \cite{LeGall_1984}. H.J. Engelbert and W. Schmidt \cite{engelbert_schmidt:1985}, \cite{engelbert_schmidt:1989_III} derived rather weak necessary and sufficient conditions on existence and uniqueness of solutions to SDEs with generalized drift, which we recall in Theorem \ref{theorem:e_u_gen_drift} below. More recently, R.F. Bass and Z.-Q. Chen \cite{bass_chen} also considered SDEs of type (\ref{eqn:SDE_mvd}). \\
\indent We call the set function $\nu$ in Eq. (\ref{eqn:SDE_mvd}) \emph{drift measure} and we always assume additionally
\begin{equation}\label{eqn:cond_atoms}
	\nu(\{x\}) < \frac{1}{2}\,, \qquad x \in \mathbb{R}\,.
\end{equation}
Condition (\ref{eqn:cond_atoms}) is motivated by the fact that, in general, there is no solution of Eq. (\ref{eqn:SDE_mvd}) if $\nu(\{x\}) > 1/2$ for some $x \in \mathbb{R}$. The case $\nu(\{x\}) = 1/2$ for some $x \in \mathbb{R}$ corresponds to a reflecting barrier at the point $x$, which requires different methods to treat Eq. (\ref{eqn:SDE_mvd}) than by assuming (\ref{eqn:cond_atoms}) (cf. W. Schmidt \cite{schmidt:1989}, R.F. Bass and Z.-Q. Chen \cite{bass_chen}). In S. Blei and H.J. Engelbert \cite{blei_engelbert_2012} the reader can find a complete treatment of the features of Eq. (\ref{eqn:SDE_mvd}) in the cases $\nu(\{x\}) > 1/2$ and $\nu(\{x\}) = 1/2$ for some $x \in \mathbb{R}$. \\
\indent To see how equations of type (\ref{eqn:SDE_mvasd}) generalize equations of the form (\ref{eqn:SDE_mvd}), we consider the integral equation
\begin{equation}\label{eqn:integral_eqn}
	g_\nu(x) = \left\{\begin{array}{ll}
						1 - 2 \displaystyle\int_{[0,x]} g_\nu(y-) \, \nu(\myd y), & x \geq 0, \medskip \\
						1 + 2 \displaystyle\int_{(x,0)} g_\nu(y-) \, \nu(\myd y), & x < 0,
				 \end{array}\right.
\end{equation}
which has a unique c\`adl\`ag solution $g_\nu$. Note that for this statement condition (\ref{eqn:cond_atoms}) is needed. Moreover, $g_\nu$ is strictly positive as well as locally of bounded variation and the same properties also hold for the reciprocal function $1/g_\nu$. The explicit form of the solution $g_\nu$ can be found in \cite{engelbert_schmidt:1989_III}, (4.26). Setting $f_\nu := 1/g_\nu$, integration by parts gives
\begin{equation}\label{eqn:drift_measure}
	\nu(\myd y) =  \frac{1}{2} \, f^{-1}_\nu(y) \, \myd f_\nu(y)\, .
\end{equation}
Taking any solution $(X,\mathbb{F})$ of Eq. (\ref{eqn:SDE_mvd}), because of (\ref{eqn:drift_measure}), its drift can be expressed as
\[\begin{split}
 \int_\mathbb{R} L_+^X(t,y) \, \nu(\myd y) 
 			&= \int_\mathbb{R} \frac{1}{2} \, L_+^X(t,y) \, f^{-1}_\nu(y) \, \myd f_\nu(y)\,.
\end{split}\]
Therefore, defining 
\[
	L_m^X(t,y) := \frac{1}{2} \, L_+^X(t,y) \, f^{-1}_\nu(y)\,, \qquad (t,y) \in [0,S_\infty^X) \times \mathbb{R},
\]
we can write
\[
	X_t = X_0 + \int_0^t b(X_s) \, \myd B_s + \int_\mathbb{R} L_m^X(t,y) \, \myd {f_\nu}(y)\,.
\]
Furthermore, as an immediate consequence of the occupation times formula (\ref{eqn:occupationtime}) for $X$, we obtain that $L_m^X$ satisfies condition (iv) of Definition \ref{def:solution} with respect to the measure $m(\myd x) = 2 \, f_\nu(x) \, \myd x$. Altogether, $(X,\mathbb{F})$ is also a solution of Eq. (\ref{eqn:SDE_mvasd}) with diffusion coefficient $b$ and drift function $f_\nu$. \\
\indent Conversely, let us assume additionally that the reciprocal $1/f$ of the drift function $f$ in Eq. (\ref{eqn:SDE_mvasd}) is also of locally bounded variation or, equivalently, that $f$ and $f_-$ have no zeros.\footnote{For any real function $f$ with left hand limits $f(x-)$ we set $f_-(x) = f(x-)$, $x\in\mathbb{R}$.} Then, defining $\nu$ via (\ref{eqn:drift_measure}) by using $f$ on the right-hand side, we obtain a feasible drift measure $\nu$ fulfilling (\ref{eqn:cond_atoms}) and Eq. (\ref{eqn:SDE_mvasd}) reduces to an equation of type (\ref{eqn:SDE_mvd}) with $\nu$ as drift measure (see Remark \ref{remark:connection_mvd_mvasd}). Note that the corresponding function $f_\nu=1/g_\nu$, where $g_\nu$ is obtained via (\ref{eqn:integral_eqn}), differs from $f$ at most by a multiplicative constant. \\
\indent In contrast to Eq. (\ref{eqn:SDE_mvd}) regarded as an equation of type (\ref{eqn:SDE_mvasd}), one of the new features of Eq. (\ref{eqn:SDE_mvasd}) is that we do not postulate that $1/f$ is also of locally bounded variation. This is equivalent to allow $f$ and $f_-$ to have zeros. Responsible for the singularity of Eq. (\ref{eqn:SDE_mvasd}), these zeros play an important role in the following analysis of Eq. (\ref{eqn:SDE_mvasd}). We set $F_+ := \{x \in \mathbb{R} : f(x) = 0\}$ and $F_- := \{x \in \mathbb{R} : f(x-) = 0\}$. From our assumption that $1/f$ is locally integrable it follows immediately that the closed set $F:= F_+ \cup F_-$ is of Lebesgue-measure zero. Using an arbitrary drift function $f$ analogously as in (\ref{eqn:drift_measure}) to define $\nu(\myd y) :=  1/2 \, f^{-1}(y) \, \myd f(y)$,\footnote{We always use the convention $0 \cdot +\infty = 0$ and $a \cdot +\infty = +\infty$, $a \in \mathbb{R}\setminus\{0\}$.}
we obtain a set function $\nu$ which is in general not a finite signed measure on $\mathscr{B}([-N,N])$ for every $N \in \mathbb{N}$. Indeed, taking for example the drift function 
\[
	f(x) = \ind_{(-\infty,0)}(x) + \sqrt{x} \, \ind_{(0, +\infty)}(x), \qquad x\in\mathbb{R},
\]
for any $N \in \mathbb{N}$, the corresponding $\nu$ is a signed measure on $\mathscr{B}([-N,N])$ which is only $\sigma$-finite. But as seen from the drift function $f(x) = \sqrt{|x|}$, $x\in\mathbb{R}$, it is also possible that $\nu$ is no longer a signed measure on $\mathscr{B}([-N,N])$ because for some sets it takes the value $+\infty$ and for other sets the value $-\infty$. \\
\indent Besides these examples of a singleton $F$, it is of course possible that $F$ is even an uncountable (e.g., Cantor like) set. Nevertheless, already in the case that $F$ consists only of one point, surprising and interesting effects like skewness and reflection can be observed for solutions of Eq. (\ref{eqn:SDE_mvasd}). In the context of the complexity of the set $F$, it becomes clear quite quickly that, in general, solutions of Eq. (\ref{eqn:SDE_mvasd}) go beyond the scope of semimartingales. But it is the objective of this paper to approach the problem in a first step keeping within the framework of semimartingales. \\
\indent The paper is organized as follows. We begin with stating some useful properties of the local time $L_m^X$ of a solution $(X,\mathbb{F})$ of Eq. (\ref{eqn:SDE_mvasd}) in Section \ref{sec:prop_loc_time}. Afterwards in Section \ref{sec:space_transform} we investigate the structure of a solution $(X,\mathbb{F})$ of Eq. (\ref{eqn:SDE_mvasd}). In particular, we show the connection of Eq. (\ref{eqn:SDE_mvasd}) to another equation arising from a space transformation and prepare the study of existence and uniqueness in law of solutions of Eq. (\ref{eqn:SDE_mvasd}), which is done in Section \ref{sec:symmetric_solutions} and \ref{sec:skew_solutions}. \\
\indent As preliminaries in the context of existence and uniqueness of solutions we briefly recall some known facts for the subclass of equations of type (\ref{eqn:SDE_mvd}), which we will use later. For any real measurable function $h$, we introduce the sets
\[
	N_h := \{x \in \mathbb{R} : h(x) = 0 \}
\]
and
\[
	E_h := \{x \in \mathbb{R}: \int_U h^{-2}(y) \, \myd y = + \infty \text{ for all open sets } U \text{ containing } x\}\,.
\]
The following property of a solution of Eq. (\ref{eqn:SDE_mvd}) is proven in \cite{engelbert_schmidt:1989_III}, Proposition (4.34). For the special case of an equation of type (\ref{eqn:SDE_without_drift}) without drift, the corresponding statement can also be found in \cite{engelbert_schmidt:1989_III}, Proposition (4.14).
\begin{lemma}\label{lemma:stopping}
	Let $(X,\mathbb{F})$ be a solution of Eq. (\ref{eqn:SDE_mvd}). Then we have $X_t = X_{t \wedge D_{E_b}^X}$, $t \geq 0$, 
	$\mathbf{P}$-a.s., where $D_{E_b}^X$ denotes the first entry time of $X$ into the set $E_b$.
\end{lemma}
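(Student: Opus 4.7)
\noindent My approach is to reduce Eq.~(\ref{eqn:SDE_mvd}) to a driftless equation of the form (\ref{eqn:SDE_without_drift}) via the scale transformation attached to the drift measure $\nu$, and then to invoke the driftless-case analogue (Proposition~(4.14) of \cite{engelbert_schmidt:1989_III}), transporting the exceptional set $E_b$ by a change of variables. As a preliminary, $E_b$ is closed: if $y\notin E_b$, then some open $U\ni y$ satisfies $\int_U b^{-2}(z)\,\myd z<\infty$, and every $y'\in U$ inherits this property, so $E_b^c$ is open. Consequently $\tau:=D_{E_b}^X$ is an $\mathbb{F}$-stopping time, and on $\{\tau<S_\infty^X\}$ continuity of $X$ forces $X_\tau\in E_b$.

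The core of the plan is to introduce the scale function $s(x):=\int_0^x g_\nu(y)\,\myd y$, where $g_\nu$ is the c\`adl\`ag solution of (\ref{eqn:integral_eqn}). Since $g_\nu$ and $1/g_\nu$ are strictly positive, locally bounded and of locally bounded variation, $s$ is a strictly increasing homeomorphism of $\mathbb{R}$ onto $s(\mathbb{R})$ whose distributional second derivative is the Stieltjes measure $\myd g_\nu=-2\,g_\nu(\cdot-)\,\nu(\cdot)$, read off from (\ref{eqn:integral_eqn}). Applying the generalized It\^o--Tanaka formula for differences of convex functions to $Y_t:=s(X_t)$, substituting (\ref{eqn:SDE_mvd}) for $\myd X$, and carefully handling the integration of $g_\nu$ against the local-time measure $\myd L_+^X(\cdot,a)$ (which is supported on $\{u\geq 0:X_u=a\}$), the local-time correction from the It\^o--Tanaka formula exactly cancels the local-time drift carried by $\myd X$, and one obtains on $[0,S_\infty^X)$ the driftless equation
\begin{equation*}
Y_t=s(X_0)+\int_0^t\tilde b(Y_u)\,\myd B_u\,,\qquad\tilde b(y):=g_\nu(s^{-1}(y))\,b(s^{-1}(y))\,.
\end{equation*}

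Now Proposition~(4.14) of \cite{engelbert_schmidt:1989_III} applies to $Y$ and yields $Y_t=Y_{t\wedge D_{E_{\tilde b}}^Y}$, so it remains to identify $D_{E_{\tilde b}}^Y$ with $\tau$. The change of variables $z=s(x)$, $\myd z=g_\nu(x)\,\myd x$, gives
\begin{equation*}
\int_{s(U)}\tilde b^{-2}(z)\,\myd z=\int_U\frac{\myd x}{g_\nu(x)\,b^2(x)}\,,
\end{equation*}
so local boundedness of $g_\nu$ and $1/g_\nu$ implies the equivalence $y\in E_{\tilde b}\Leftrightarrow s^{-1}(y)\in E_b$, i.e., $E_{\tilde b}=s(E_b)$. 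Because $s$ is a homeomorphism, $D_{E_{\tilde b}}^Y=\tau$, and inverting $s$ yields $X_t=s^{-1}(Y_t)=s^{-1}(Y_{t\wedge\tau})=X_{t\wedge\tau}$, as required.

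The most delicate part will be the It\^o--Tanaka calculation in the reduction step: when $\nu$ carries atoms, the correct one-sided value of $g_\nu$ must be used in each term so that the atomic contributions to the drift cancel exactly; the skew-Brownian-motion case $\nu=\beta\,\delta_0$ (with scale function $s(x)=x$ for $x<0$ and $s(x)=(1-2\beta)\,x$ for $x\ge 0$) is a useful sanity check for fixing the conventions. Routine secondary points are the localization by stopping times $\tau_n\uparrow S_\infty^X$ needed to justify the computations up to the explosion time, and the verification that $S_\infty^X$ corresponds to $S_\infty^Y$ under $s$.
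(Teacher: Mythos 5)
Your proposal is correct and takes essentially the approach the paper relies on: the paper gives no proof of Lemma \ref{lemma:stopping} itself but refers to \cite{engelbert_schmidt:1989_III}, Proposition (4.34), which is obtained there by exactly the reduction you describe --- transforming Eq.\ (\ref{eqn:SDE_mvd}) by the scale function with derivative $g_\nu$ so that the It\^o--Tanaka correction $\frac12\int L_+^X(t,y)\,\myd g_\nu(y)=-\int L_+^X(t,y)\,g_\nu(y-)\,\nu(\myd y)$ cancels the local-time drift, and then invoking the driftless-case Proposition (4.14) together with $E_{\tilde b}=s(E_b)$. The only loose end hiding in your ``routine'' remark about $S_\infty^X$ versus $S_\infty^Y$ is that when $s(\pm\infty)$ is finite the process $Y=s(X)$ exits the open interval $s(\mathbb{R})$ rather than exploding, so showing that $Y$ is a solution of Eq.\ (\ref{eqn:SDE_without_drift}) on all of $[0,+\infty)$ (as Proposition (4.14) requires) needs the extension argument of \cite{engelbert_schmidt:1989_III}, Proposition (4.29) --- the same device the paper uses in the proof of Lemma \ref{lemma:eqn_without_drift_and_transformed_symmetric_sol}; your other flagged points (one-sided values of $g_\nu$ at atoms of $\nu$, the change of variables for $E_{\tilde b}$) check out.
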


\indent The next result gives conditions on existence and uniqueness of solutions of Eq. (\ref{eqn:SDE_mvd}). For the proofs we refer to H.J. Engelbert and W. Schmidt \cite{engelbert_schmidt:1989_III}, Theorem (4.35) and (4.37) (see also \cite{engelbert_schmidt:1985}, Theorem 3 and 4). These statements were established by reducing Eq. (\ref{eqn:SDE_mvd}) to an equation of type (\ref{eqn:SDE_without_drift}) without drift. Hence, analogous statements for equations of type (\ref{eqn:SDE_without_drift}), which are verified in \cite{engelbert_schmidt:1989_III}, Theorem (4.17) and (4.22) (see also \cite{engelbert_schmidt:1985}, Theorem 1 and 2), could be used to derive the results for Eq. (\ref{eqn:SDE_mvd}).
\begin{theorem}\label{theorem:e_u_gen_drift}
	(i) For every initial distribution there exists a solution of Eq. (\ref{eqn:SDE_mvd}) if and only if the condition $E_b \subseteq N_b$ 
	for the diffusion coefficient $b$ is satisfied. \medskip
	
	(ii) For every initial distribution there exists a unique solution of Eq. (\ref{eqn:SDE_mvd}) if and only if the condition $E_b = N_b$ 
	for the diffusion coefficient $b$ is satisfied.
\end{theorem}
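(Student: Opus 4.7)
The plan is to reduce Eq.~(\ref{eqn:SDE_mvd}) to a driftless equation of type~(\ref{eqn:SDE_without_drift}) by a bijective space transformation, and then invoke the corresponding existence/uniqueness theorem for driftless SDEs from Engelbert--Schmidt. Let $g_\nu$ be the unique c\`adl\`ag solution of~(\ref{eqn:integral_eqn}); recall that $g_\nu$ is strictly positive and that both $g_\nu$ and $1/g_\nu$ are locally of bounded variation. Set
\[
G_\nu(x) := \int_0^x g_\nu(y-)\,\myd y, \qquad x \in \mathbb{R}.
\]
Since $g_\nu > 0$, the function $G_\nu$ is a strictly increasing continuous homeomorphism of $\mathbb{R}$ onto an open interval $I \subseteq \mathbb{R}$; denote its inverse by $H_\nu$. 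The purpose of $G_\nu$ is that its ``derivative'' $g_\nu$ encodes precisely the drift measure $\nu$ via relation~(\ref{eqn:drift_measure}).

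Given a solution $(X,\mathbb{F})$ of~(\ref{eqn:SDE_mvd}), I would apply the generalized It\^o--Tanaka formula to $Y := G_\nu(X)$ on $[0,S_\infty^X)$, exploiting that $g_\nu$ has bounded variation. The bounded-variation correction term produces exactly $-\int L_+^X(t,y)\,\nu(\myd y)$ in view of~(\ref{eqn:drift_measure}) and~(\ref{eqn:integral_eqn}), cancelling the generalized drift of~(\ref{eqn:SDE_mvd}). What remains is a driftless SDE
\[
Y_t = Y_0 + \int_0^t \tilde b(Y_s)\,\myd B_s, \qquad \tilde b(y) := (g_\nu \cdot b)(H_\nu(y)),
\]
valid up to the explosion time of $Y$, which corresponds to that of $X$ via $G_\nu(\pm\infty) = \partial I$. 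Conversely, applying $H_\nu$ to any solution $Y$ of this driftless equation and reversing the computation produces a solution of~(\ref{eqn:SDE_mvd}). This gives a one-to-one correspondence between solutions of the two equations that respects initial distributions and image laws on $C_{\overline{\mathbb{R}}}([0,+\infty))$.

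It remains to translate the conditions $E_b \subseteq N_b$ and $E_b = N_b$ into the corresponding conditions for $\tilde b$. Since $g_\nu > 0$, one has $N_{\tilde b} = G_\nu(N_b)$ trivially. For $E_{\tilde b}$, the change of variables $y = G_\nu(x)$ gives
\[
\int_{G_\nu(U)} \tilde b^{-2}(y)\,\myd y = \int_U g_\nu^{-1}(x)\,b^{-2}(x)\,\myd x,
\]
and since $g_\nu$ and $1/g_\nu$ are locally bounded, the local non-integrability of $b^{-2}$ at $x$ is equivalent to that of $\tilde b^{-2}$ at $G_\nu(x)$; hence $E_{\tilde b} = G_\nu(E_b)$. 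Combining with the driftless existence/uniqueness result (Engelbert--Schmidt, \cite{engelbert_schmidt:1989_III}, Theorem (4.17) and (4.22)), statements (i) and (ii) follow.

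The main obstacle is the careful execution of the It\^o--Tanaka step: one must work with $X$ only up to the stopping time $S_\infty^X$, handle the possible jumps of $g_\nu$ at atoms of $\nu$ (where condition~(\ref{eqn:cond_atoms}) is essential to keep $g_\nu > 0$), and verify that the correction term from the Meyer--It\^o formula matches $\int L_+^X(t,y)\,\nu(\myd y)$ exactly. A secondary, more routine point is to check that the explosion structure (Definition~\ref{def:solution}(ii),(iii)) is respected by $G_\nu$ and $H_\nu$ on the boundary of $I$, so that solutions of~(\ref{eqn:SDE_mvd}) and of the driftless equation are in genuine bijection, not merely up to behaviour before explosion.
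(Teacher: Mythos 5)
Your proposal is correct and follows exactly the route the paper indicates for this result: the paper gives no proof of its own but refers to Engelbert--Schmidt (\cite{engelbert_schmidt:1989_III}, Theorems (4.35) and (4.37)) and explicitly notes that those proofs proceed by reducing Eq.~(\ref{eqn:SDE_mvd}) to a driftless equation of type~(\ref{eqn:SDE_without_drift}) via the transformation built from $g_\nu$, which is precisely your construction. Your translation of the conditions, $N_{\tilde b}=G_\nu(N_b)$ and $E_{\tilde b}=G_\nu(E_b)$ on $G_\nu(\mathbb{R})$ using the local boundedness of $g_\nu$ and $1/g_\nu$, and the cancellation of the drift by the It\^o--Tanaka correction term via~(\ref{eqn:integral_eqn}) and~(\ref{eqn:drift_measure}), are the standard and correct ingredients of that reduction.
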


\section{Properties of the Local Time $\text{\textit{L}}_{\text{\textit{\lowercase{m}}}}^{\text{\textit{X}}}$}\label{sec:prop_loc_time}
\noindent In this part of the paper, for a solution $(X,\mathbb{F})$ of Eq. (\ref{eqn:SDE_mvasd}) we investigate the relation between the local time $L_m^X$ and the semimartingale local time $L_+^X$. In particular, we will see that $L_m^X$ inherits useful, well-known properties of $L_+^X$.
\begin{lemma}\label{lemma:connection_loc_times}
	Let $(X,\mathbb{F})$ be a solution of Eq. (\ref{eqn:SDE_mvasd}). Then we have
	\[
		 2\,f(y\pm)\,L^X_m(t,y\pm) = L_\pm^X(t,y), \qquad t < S_\infty^X,\ y\in \mathbb{R}, \ \mathbf{P}\text{-a.s.}
		 \footnote{Note that $L_m^X(t,y+) = L^X_m(t,y),	\ t < S_\infty^X, \ y\in\mathbb{R}, \ \mathbf{P}\text{-a.s.}$}
	\]
\end{lemma}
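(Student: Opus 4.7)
My plan is to recognize both sides as densities of the same random measure on $\mathbb{R}$, read off equality almost everywhere, and then upgrade to a pointwise identity via right-continuity.

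First, since $(X,\mathbb{F})$ is a continuous semimartingale up to $S^X_\infty$, the standard occupation times formula for the right semimartingale local time (recalled in the Appendix) gives, $\mathbf{P}$-a.s.\ and for every non-negative Borel $h$,
\begin{equation*}
	\int_0^t h(X_s)\,\myd \assPro{X}_s \,=\, \int_\mathbb{R} h(y)\,L^X_+(t,y)\,\myd y, \qquad t<S^X_\infty.
\end{equation*}
On the other hand, condition (iv) of Definition \ref{def:solution} expresses the same left-hand side as $\int_\mathbb{R} h(y)\,L^X_m(t,y)\cdot 2f(y)\,\myd y$. Subtracting these two identities, testing against a countable generating family such as $h=\ind_{(-\infty,a]}$ for $a\in\mathbb{Q}$, and passing from rational $t$ to arbitrary $t<S^X_\infty$ by continuity of $L^X_+(\cdot,y)$ and $L^X_m(\cdot,y)$ in $t$, I obtain a single $\mathbf{P}$-null set outside of which $L^X_+(t,y)=2f(y)\,L^X_m(t,y)$ for Lebesgue-a.e.\ $y\in\mathbb{R}$ and every $t<S^X_\infty$.

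Next I upgrade the a.e.\ identity to a pointwise one. Both $y\mapsto L^X_+(t,y)$ (by the definition of the right semimartingale local time) and $y\mapsto 2f(y)\,L^X_m(t,y)$ (since $f$ is right-continuous by assumption and $L^X_m(t,\cdot)$ is right-continuous by Definition \ref{def:solution}(iv)) are right-continuous functions on $\mathbb{R}$. Two such functions agreeing on a set of full Lebesgue measure must coincide everywhere: for each $y$ one can pick a sequence $z_n\downarrow y$ from the set of agreement, which is dense from the right of $y$ because its complement is Lebesgue-negligible. This yields the ``$+$'' version of the claim.

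The ``$-$'' version is then obtained by taking left limits in $y$, using the standard identity $L^X_-(t,y)=L^X_+(t,y-)$ for semimartingale local times together with the elementary fact that the left limit of the product of two functions with finite left limits equals the product of the left limits (applied to $f$ and $L^X_m(t,\cdot)$, both cadlag). The only real obstacle is the bookkeeping that makes the exceptional $\mathbf{P}$-null set uniform in $t$, $y$, and the test function $h$; this is handled cleanly by the countable-family reduction and the continuity-in-$t$ extension sketched above, and no further analytic input is required.
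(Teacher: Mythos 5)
Your argument is correct and follows essentially the same route as the paper's proof: compare the occupation times formula for $L_+^X$ with the defining property of $L_m^X$ in Definition \ref{def:solution}(iv) to get $2f(y)L_m^X(t,y)=L_+^X(t,y)$ for Lebesgue-a.e.\ $y$, then upgrade to all $y$ via right-continuity and obtain the ``$-$'' version by taking left limits. Your explicit handling of the countable test-function family and the uniformity of the null set in $t$ and $y$ is a careful elaboration of what the paper compresses into ``using the continuity properties of the involved objects.''
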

\begin{proof} Due to the occupation times formula (\ref{eqn:occupationtime}) and Definition \ref{def:solution}(iv) it holds
\[
 \int_{\mathbb{R}} h(y) \, L_m^X(t,y) \, 2 \, f(y)\,\myd y = \int_{\mathbb{R}} h(y) \, L_+^X(t,y) \, \myd y, \qquad t< S_\infty^X, \ \mathbf{P}\text{-a.s.}
\]
for every measurable function $h \geq 0$. This implies
\[
	2 \, f(y) \, L_m^X(t,y) = L_+^X(t,y) \qquad \lambda\text{-a.e.}, \ t<S_\infty^X, \ \mathbf{P}\text{-a.s.}
\]
Using the continuity properties of the involved objects, we obtain the assertions of the lemma.
\end{proof}
\begin{remark}\label{remark:connection_mvd_mvasd}
	By means of Lemma \ref{lemma:connection_loc_times} we can conclude: If the reciprocal $1/f$ 
	of the drift function $f$ is of locally bounded variation or, equivalently, if $F = \emptyset$, then Eq. (\ref{eqn:SDE_mvasd}) reduces to an 
	equation of type (\ref{eqn:SDE_mvd}). Indeed, using $\nu(\myd y) := 1/2 \, f^{-1}(y) \, \myd f(y)$ to define a feasible drift measure $\nu$ which 
	satisfies (\ref{eqn:cond_atoms}), it follows
  \[
	 \int_\mathbb{R} L_m^X(t,y)\, \myd f(y) 
	 		= \int_\mathbb{R} L_+^X(t,y) \, \frac{1}{2 \, f(y)} \, \myd f(y) 
	 		= \int_\mathbb{R} L_+^X(t,y)\, \nu(\myd y)
  \]
  for the drift part in Eq. (\ref{eqn:SDE_mvasd}). \mydia
\end{remark}

From Lemma \ref{lemma:connection_loc_times} we obtain the following corollaries. The first one is obvious. 
\begin{corollary}\label{corr:beingzero}
	For a solution $(X,\mathbb{F})$ of Eq. (\ref{eqn:SDE_mvasd}) it holds
	\[
		L_\pm^X(t,y)= 0, \qquad t < S_\infty^X, \ y \in F_\pm, \ \mathbf{P}\text{-a.s.}
	\]
\end{corollary}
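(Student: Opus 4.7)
The corollary is essentially a direct readout of Lemma \ref{lemma:connection_loc_times}, so the plan is short: evaluate the identity
\[
2\,f(y\pm)\,L^X_m(t,y\pm) = L_\pm^X(t,y)
\]
at a point $y$ belonging to the respective zero set. For $y\in F_+$ the defining property $f(y)=0$ together with the fact that, by Definition \ref{def:solution}(iv), $L_m^X(t,y+)=L_m^X(t,y)$ is a finite non-negative real number forces the left-hand side to vanish, whence $L_+^X(t,y)=0$. The case $y\in F_-$ is handled symmetrically, using $f(y-)=0$ and the finiteness of $L_m^X(t,y-)$ to conclude $L_-^X(t,y)=0$.

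The only point worth paying attention to is the order of quantifiers. Lemma \ref{lemma:connection_loc_times} is formulated with a single $\mathbf{P}$-null exceptional set valid simultaneously for all $t<S_\infty^X$ and all $y\in\mathbb{R}$, which is precisely what allows restricting $y$ to the (possibly uncountable) set $F_\pm$ without any further work; the same null set serves the corollary. No appeal to the conventions $1/0=+\infty$ or $0\cdot(+\infty)=0$ is needed here, because the relevant factor of $L_m^X$ is finite by hypothesis on the solution.

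I expect no real obstacle; the content of the corollary is that the zeros of $f$ (respectively $f_-$) are precisely the places where the right (respectively left) semimartingale local time must be absent, and this has already been absorbed into the factor $2f(y\pm)$ appearing in Lemma \ref{lemma:connection_loc_times}.
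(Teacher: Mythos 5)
Your proof is correct and follows exactly the route the paper intends: the paper simply declares the corollary ``obvious'' from Lemma \ref{lemma:connection_loc_times}, and your argument---evaluating $2\,f(y\pm)\,L_m^X(t,y\pm)=L_\pm^X(t,y)$ at $y\in F_\pm$ and using the finiteness of $L_m^X(t,y\pm)$ guaranteed by Definition \ref{def:solution}(iv)---is precisely that reading, including the correct observation that the lemma's single exceptional null set covers all $y$ simultaneously.
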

Corollary \ref{corr:beingzero} shows that we always have $L_+^X(t,y)=0$ if $y \in F_+$ and $L_-^X(t,y)=0$ if $y \in F_-$. For $L_m^X$, in general, this does not hold. In contrast, as seen from Lemma \ref{lemma:connection_loc_times}, $L_m^X$ gives a precise description of the asymptotic behaviour of $L_+^X(t,y) \, f^{-1}(y)$ in the zeros of $f$ and $f_-$.
\begin{corollary}\label{corr:comp_interval}
	The local time $L_m^X$ of a solution $(X,\mathbb{F})$ of Eq. (\ref{eqn:SDE_mvasd}) satisfies
	\[
		L_m^X(t,y\pm) = 0 \text{ on } \left\{y \notin \left[\min_{0 \leq s \leq t} X_s, \max_{0 \leq s \leq t} X_s\right]\right\}, 
										\ t < S_\infty^X, \ \mathbf{P}\text{-a.s.}
	\]
\end{corollary}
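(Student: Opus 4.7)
The plan is to derive the pointwise vanishing of $L_m^X(t,y\pm)$ outside the running range of $X$ directly from the occupation times identity in Definition \ref{def:solution}(iv), combined with the spatial regularity of $L_m^X$ and the local integrability of $1/f$. Lemma \ref{lemma:connection_loc_times} together with the standard support property of the semimartingale local time $L^X_\pm$ would do the job at points where $f(y\pm)\neq 0$, but is useless where the drift function vanishes, and this is the case that needs to be handled with care.

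First I would fix $\omega$ in the full-measure set on which Definition \ref{def:solution}(iv) holds and $L_m^X(t,\cdot)$ is right-continuous with left-hand limits, and let $t<S^X_\infty(\omega)$. Set $a:=\min_{0\le s\le t}X_s$ and $b:=\max_{0\le s\le t}X_s$ and consider the case $y>b$ (the case $y<a$ is strictly analogous with $U$ replaced by $(-\infty,a)$). Choosing $U=(b,+\infty)$ and plugging $h=\ind_U$ into the occupation times identity gives
\[
0=\int_0^t \ind_U(X_s)\,\myd\assPro{X}_s=2\int_U L_m^X(t,x)\,f(x)\,\myd x,
\]
where the first equality holds because $X_s\in[a,b]$ for every $s\le t$.

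Since $1/f$ is locally integrable, $\{f=0\}$ has Lebesgue measure zero, so the above identity forces $L_m^X(t,x)=0$ for Lebesgue-a.e.\ $x\in U$. Right-continuity of $L_m^X(t,\cdot)$ in $x$, together with a sequence $x_n\downarrow y$ chosen from within the Lebesgue-a.e.\ set on which $L_m^X(t,\cdot)$ vanishes, yields $L_m^X(t,y+)=L_m^X(t,y)=0$. Similarly, the existence of left-hand limits combined with a sequence $x_n\uparrow y$ lying in $(b,y)$ from within the same null-complement set gives $L_m^X(t,y-)=0$.

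The main obstacle is precisely the possible vanishing of $f$ at points outside $[a,b]$, which prevents a direct reduction to the (easier) support property of $L^X_\pm$ via Lemma \ref{lemma:connection_loc_times}. The remedy is to use local integrability of $1/f$, which controls the size of $\{f=0\}$ and thereby lets the occupation times identity deliver a Lebesgue-a.e.\ statement about $L_m^X$ itself, rather than only about the product $fL_m^X$; the spatial cadlag regularity of $L_m^X$ then upgrades this to the pointwise conclusion asserted in the corollary.
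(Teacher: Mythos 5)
Your proof is correct. It differs from the paper's argument only in how the preliminary Lebesgue-a.e.\ statement is obtained: the paper invokes the standard support property of the semimartingale local times $L_\pm^X$ together with Lemma \ref{lemma:connection_loc_times}, which yields $L_m^X(t,y\pm)=0$ off $\left[\min_{0\le s\le t}X_s,\max_{0\le s\le t}X_s\right]\cup F$, and then removes the exceptional Lebesgue-null set $F$ by the c\`adl\`ag regularity of $L_m^X(t,\cdot)$; you instead plug the indicator of the complement of the running range directly into the occupation times identity of Definition \ref{def:solution}(iv), use the non-negativity of $L_m^X$ and $f$ and the fact that $\{f=0\}$ is Lebesgue-null (a consequence of the local integrability of $1/f$) to get $L_m^X(t,\cdot)=0$ a.e.\ there, and then perform the same regularity upgrade. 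The two routes are essentially equivalent — Lemma \ref{lemma:connection_loc_times} is itself proved from the occupation times formula, so you have in effect unrolled the paper's proof to its primitives — but yours is marginally more self-contained in that it does not need the support property of $L_\pm^X$, whereas the paper's is shorter given that Lemma \ref{lemma:connection_loc_times} has already been established. Your framing of the points where $f$ vanishes as the ``main obstacle'' is a little overstated: the paper disposes of them by exactly the same a.e.-plus-regularity device that you use, so they do not require a genuinely new idea in either version.
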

\begin{proof} It is well-known that for the semimartingale local times we have 
\[
	L_\pm^X(t,y) = 0 \text{ on } \left\{y \notin \left[\min_{0 \leq s \leq t} X_s, \max_{0 \leq s \leq t} X_s\right]\right\}, 
										\ t < S_\infty^X, \ \mathbf{P}\text{-a.s.}
\]
From Lemma \ref{lemma:connection_loc_times} we see immediately
\[
	L_m^X(t,y \pm) = 0 \text{ on } \left\{y \notin \left[\min_{0 \leq s \leq t} X_s, \max_{0 \leq s \leq t} X_s\right] \cup F \right\}, 
										 \ t < S_\infty^X, \ \mathbf{P}\text{-a.s.}
\]
Since $F$ has Lebesgue measure zero, the continuity properties of $L_m^X$ imply the desired result.
\end{proof}

Based on Corollary \ref{corr:comp_interval}, we use the convention $L_m^X(t,\pm\infty ) := 0$, $t < S_\infty^X$. \\
\indent Next we prove that, similar to property (\ref{eqn:int_wrt_loc_time}) of the semimartingale local times, the measures $L_m^X(\myd t , y)$ and $L_m^X(\myd t, y-)$ do not charge the set $\{0 \leq t < S_\infty^X: X_t \neq y\}$.
\begin{theorem}\label{theorem:supp_loc_time}
	Let $(X,\mathbb{F})$ be a solution of Eq. (\ref{eqn:SDE_mvasd}). Then, for every $y \in \mathbb{R}$, $L_m^X(\myd t,y\pm)$ is carried by 
	$\{0 \leq t < S_\infty^X : X_t = y\}$ $\mathbf{P}$-a.s., i.e.,
	\[
		\int_0^t \ind_{\{y\}}(X_s) \, L_m^X(\myd s, y\pm) = L_m^X(t,y\pm), \qquad t < S_\infty^X, \ y \in \mathbb{R}, \ \mathbf{P}\text{-a.s.}
	\]
\end{theorem}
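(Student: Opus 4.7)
The plan is to bootstrap the analogous well-known support property (\ref{eqn:int_wrt_loc_time}) of the semimartingale local times $L_\pm^X$ to $L_m^X$ via Lemma \ref{lemma:connection_loc_times}, distinguishing two cases depending on whether $y \in F_\pm$. I describe the argument for $L_m^X(\myd t, y+) = L_m^X(\myd t, y)$; the case of $L_m^X(\myd t, y-)$ is symmetric with left-sided approximations.

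If $y \notin F_+$, i.e., $f(y+) > 0$, Lemma \ref{lemma:connection_loc_times} gives $L_m^X(t, y) = L_+^X(t, y)/(2 f(y+))$ $\mathbf{P}$-a.s. for all $t < S_\infty^X$, so as a measure in $t$, $L_m^X(\myd t, y)$ is a strictly positive constant multiple of $L_+^X(\myd t, y)$ and inherits the support property from (\ref{eqn:int_wrt_loc_time}).

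The non-trivial case is $y \in F_+$, i.e., $f(y+) = 0$, where Lemma \ref{lemma:connection_loc_times} degenerates to $0 = 0$ and I argue by approximation. Since $F_+$ has Lebesgue measure zero, I pick a sequence $z_n \downarrow y$ with $f(z_n) > 0$; the previous case applied to each $z_n$ shows that $L_m^X(\myd t, z_n)$ is carried by $\{s : X_s = z_n\}$. Right-continuity of $L_m^X$ in $x$ yields $L_m^X(t, z_n) \to L_m^X(t, y)$ pointwise in $t$. Fixing any $T < S_\infty^X$, the $L_m^X(\cdot, z_n)$ are continuous increasing distribution functions on $[0, T]$ converging pointwise to the continuous increasing $L_m^X(\cdot, y)$ with matching totals at $T$; hence the associated finite measures $L_m^X(\myd t, z_n)$ converge weakly on $[0, T]$ to $L_m^X(\myd t, y)$. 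For any $\delta > 0$, the set $F_\delta := \{s \in [0, T] : |X_s - y| \leq \delta\}$ is closed by continuity of $X$, and once $|z_n - y| \leq \delta$ one has $\{s \leq T : X_s = z_n\} \subseteq F_\delta$, so $L_m^X(F_\delta, z_n) = L_m^X(T, z_n)$ and $\limsup_n L_m^X(F_\delta, z_n) = L_m^X(T, y)$. The closed-set half of Portmanteau's theorem then gives $L_m^X(F_\delta, y) \geq L_m^X(T, y)$, hence $L_m^X(F_\delta, y) = L_m^X(T, y)$. Letting $\delta \downarrow 0$ along $\delta = 1/n$, continuity of $X$ together with finiteness of $L_m^X(T, y)$ yields that the mass of $L_m^X(\cdot, y)$ on $[0, T]$ is concentrated on $\{s \leq T : X_s = y\}$; since $T < S_\infty^X$ was arbitrary, the claim follows.

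The main technical obstacle is the weak convergence step in the second case: translating the pointwise convergence of the continuous increasing functions $L_m^X(\cdot, z_n)$ into weak convergence of the associated finite measures on $[0, T]$ and then invoking the Portmanteau inequality in the correct direction (closed sets), combined with matching total masses, to squeeze out the equality $L_m^X(F_\delta, y) = L_m^X(T, y)$. Once this is in place, Lemma \ref{lemma:connection_loc_times} and the classical property (\ref{eqn:int_wrt_loc_time}) of $L_\pm^X$ handle the rest.
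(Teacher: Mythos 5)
Your proposal is correct and follows essentially the same route as the paper: the identical case split on $y\in F_+$, approximation by a decreasing sequence of points outside $F_+$, weak convergence of the time-measures $L_m^X(\myd s,\cdot)$ via right-continuity in the spatial variable, and a portmanteau argument. The only (cosmetic) difference is that you apply the closed-set half of portmanteau to the sets $\{s\le T:|X_s-y|\le\delta\}$ and squeeze using the convergence of total masses, whereas the paper applies the open-set half to the complementary sets and shows they carry zero mass directly.
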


\begin{proof} We prove the statement for $L_m^X$. The verification for the left-continuous version of $L_m^X$ can be done in an analogous way. Using property (\ref{eqn:int_wrt_loc_time}) of the semimartingale local time $L_+^X$ and Lemma \ref{lemma:connection_loc_times}, we conclude
\begin{equation}\label{eqn:first_part}
	\begin{split}
		\int_0^t \ind_{\{y\}}(X_s) \, L_m^X(\myd s, y) &= \frac{1}{2f(y)} \, \int_0^t \ind_{\{y\}}(X_s) \, L_+^X(\myd s, y)\\
																									 &= \frac{1}{2f(y)} \, L_+^X(t, y) \\
																									 &= L_m^X(t, y), \qquad t < S_\infty^X, \ y \in \mathbb{R}\setminus F_+, \ \mathbf{P}\text{-a.s.}
	\end{split}
\end{equation}
Therefore, it remains to show that the asserted relation is also fulfilled for the points of the set $F_+$. Let $\Omega_0 \in \mathcal{F}$ be such that $\mathbf{P}(\Omega_0) = 1$, $X_0(\omega) \in \mathbb{R}$, $\omega \in \Omega_0$, and with the property that, for all $\omega \in \Omega_0$, $L_m^X(\,.\,,\,.\,)(\omega)$ satisfies \ref{def:solution}(iv) as well as (\ref{eqn:first_part}). Given $\omega \in \Omega_0$ and $a \in F_+$, we can, because $F$ has Lebesgue measure zero, choose a sequence $(y_n)_{n \in \mathbb{N}} \subseteq \mathbb{R}\setminus F_+$ with $y_{n+1} \leq y_n$ and $\lim_{n \rightarrow +\infty} y_n = a$. Moreover, let $(S_i)_{i \in \mathbb{N}} \subseteq [0,+\infty)$ be an increasing sequence of real numbers with $S_i < S_\infty^X(\omega)$, $i \in \mathbb{N}$, and $\lim_{i \rightarrow +\infty} S_i = S_\infty^X(\omega)$. Then, for fixed $i \in \mathbb{N}$ 
\[
	\int_B L_m^X(\myd s,a)(\omega) \quad \text{and} \quad \int_B L_m^X(\myd s,y_n)(\omega), \ n \in \mathbb{N}, \qquad B \in 
	\mathscr{B}([0,S_i]),
\]
are finite measures on $([0,S_i],\mathscr{B}([0,S_i]))$. Since $L_m^X(t,\, . \,)(\omega)$ is right-continuous in $a$ for every $t \in [0,S_i]$, the sequence of measures
\[
	\left(\int_{\text{.}} L_m^X(\myd s, y_n)(\omega)\right)_{n \in \mathbb{N}}
\]
converges weakly to the measure $\int_{\text{.}} L_m^X(\myd s, a)(\omega)$. From the continuity of $X_{.}(\omega)$, we obtain that the sets
\[
	T_k := [0,S_i] \cap (X_{.}(\omega))^{-1}\textstyle\left((-\infty,a-\frac{1}{k})\cup (a+\frac{1}{k},+\infty)\right), \qquad k \in \mathbb{N},
\]
are open in $[0,S_i]$. By the weak convergence of the considered measures, for every $k \in \mathbb{N}$ it follows 
\[
	\int_{T_k} L_m^X(\myd s, a)(\omega) \leq \liminf\limits_{n \rightarrow +\infty} \int_{T_k} L_m^X(\myd s, y_n)(\omega)\,.
\]
Additionally, for every $k \in \mathbb{N}$ there exists an $n_0(k) \in \mathbb{N}$ such that $y_n \in (a-\frac{1}{k},a+\frac{1}{k})$, $n \geq n_0(k)$, and together with (\ref{eqn:first_part}) we conclude
\[
	\int_{T_k} L_m^X(\myd s, y_n)(\omega) = 0, \qquad n \geq n_0(k),
\]
and therefore 
\[
	\int_{T_k} L_m^X(\myd s, a)(\omega) = 0, \qquad k \in \mathbb{N}.
\]
Via the continuity from below of the considered measure we get
\[\begin{split}
	\int_{\{t \in [0,S_i]: X_t(\omega) \neq a\}} L_m^X(\myd s, a)(\omega) 
						& = \int_{\bigcup\limits_{k=1}^{+\infty} T_k} L_m^X(\myd s, a)(\omega) \\
						& = \lim_{k \rightarrow +\infty} \int_{T_k} L_m^X(\myd s, a)(\omega) \\
						& = 0.
\end{split}\]
Considering the measure on $[0,S_\infty^X(\omega))$ and applying again the continuity from below, we finally observe
\[\begin{split}
	\int_{\{t \in [0,S_\infty^X(\omega)): X_t(\omega) \neq a\}} L_m^X(\myd s, a)(\omega) 
						& = \int_{\bigcup\limits_{i=1}^{+\infty} \{t \in [0,S_i]: X_t(\omega) \neq a\}} L_m^X(\myd s, a)(\omega) \\
						& = \lim_{i \rightarrow +\infty} \int_{\{t \in [0,S_i]: X_t(\omega) \neq a\}} L_m^X(\myd s, a)(\omega) \\
						& = 0.
\end{split}\]
Since $\omega \in \Omega_0$ and $a \in F_+$ were chosen arbitrarily, the proof is finished. 
\end{proof}

The last theorem allows us to conclude that the local time $L_m^X$ is also $\mathbf{P}$-a.s. continuous in the state variable except in the points of $F_-$.
\begin{corollary}\label{corollary:left_continuity}
	Let $(X,\mathbb{F})$ be a solution of Eq. (\ref{eqn:SDE_mvasd}). Then it holds
	\[
		L_m^X(t,y) = L_m^X(t,y-), \qquad t < S_\infty^X ,\ y \in \mathbb{R}\setminus F_-, \ \mathbf{P}\text{-a.s.}
	\]
\end{corollary}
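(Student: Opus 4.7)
The plan is to combine Lemma \ref{lemma:connection_loc_times} with the standard balance identity between the right and left semimartingale local times of $X$. Writing $X = X_0 + M + A$ with $M_t = \int_0^t b(X_s)\,\myd B_s$ continuous and $A_t = \int_{\mathbb{R}} L_m^X(t,z)\,\myd f(z)$ of locally bounded variation and continuous in $t$, Tanaka's formula applied with the two sign conventions for $\sgn(0)$ yields
\[
L_+^X(t,y) - L_-^X(t,y) = 2\int_0^t \ind_{\{y\}}(X_s)\,\myd A_s\,, \qquad y\in\mathbb{R},\ t<S_\infty^X,\ \mathbf{P}\text{-a.s.}
\]
The essential step is to evaluate the right-hand side in terms of $L_m^X$ and the jumps of $f$.

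To that end, one interprets $\myd A_s$ by differentiating under the Stieltjes integral defining $A$ and applies Fubini's theorem to get
\[
\int_0^t \ind_{\{y\}}(X_s)\,\myd A_s = \int_{\mathbb{R}}\int_0^t \ind_{\{y\}}(X_s)\,L_m^X(\myd s,z)\,\myd f(z)\,.
\]
By Theorem \ref{theorem:supp_loc_time} the inner integral vanishes for $z\neq y$ and equals $L_m^X(t,y)$ at $z=y$, so only the atom of $\myd f$ at $y$ contributes and the right-hand side equals $L_m^X(t,y)\bigl(f(y)-f(y-)\bigr)$.

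Inserting this into the balance identity and then substituting $L_+^X(t,y) = 2 f(y)\,L_m^X(t,y)$ and $L_-^X(t,y) = 2 f(y-)\,L_m^X(t,y-)$ from Lemma \ref{lemma:connection_loc_times}, the terms proportional to $f(y)$ cancel on both sides, and the identity collapses to
\[
f(y-)\,L_m^X(t,y-) = f(y-)\,L_m^X(t,y)\,.
\]
For $y\notin F_-$ we have $f(y-)>0$, hence $L_m^X(t,y)=L_m^X(t,y-)$, which is the claim. A joint-continuity argument in the remaining variables, together with the right-continuity of $L_m^X(t,\cdot)$ and the existence of left limits in $y$, allows the exceptional null set to be chosen uniformly.

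The principal technical point will be the Fubini step, because $A$ is not a usual stochastic integral but is built from the two-parameter object $L_m^X$ integrated against the deterministic but only locally finite signed measure $\myd f$. I would handle it by decomposing $\myd f$ into its continuous and atomic parts and treating each piece separately by monotone approximation, exploiting the adaptedness, the $t$-continuity, and the right-continuity with left limits in $y$ of $L_m^X$ guaranteed by Definition \ref{def:solution}(iv).
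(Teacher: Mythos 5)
Your proposal is correct and follows essentially the same route as the paper: the identity $L_+^X(t,y)-L_-^X(t,y)=2\int_0^t \ind_{\{y\}}(X_s)\,\myd V_s$ (which the paper cites as (\ref{eqn:loc_time_and_variation_process})), a Fubini interchange combined with Theorem \ref{theorem:supp_loc_time} to reduce the drift contribution to $2L_m^X(t,y)(f(y)-f(y-))$, and then Lemma \ref{lemma:connection_loc_times} to cancel the $f(y)$ terms and divide by $f(y-)>0$ for $y\notin F_-$. The only cosmetic difference is that you flag the Fubini step as delicate and propose splitting $\myd f$ into atomic and continuous parts, whereas the paper performs the interchange directly; both are fine.
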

\begin{proof} Using property (\ref{eqn:loc_time_and_variation_process}) and Theorem \ref{theorem:supp_loc_time}, we conclude
\[
	\begin{split}
		L_+^X(t,y) - L_-^X(t,y) &= 2 \int_0^t \ind_{\{y\}}(X_s) \, \int_{\mathbb{R}} L_m^X(\myd s,z) \, \myd f(z) \\
													&= 2 \int_{\mathbb{R}} \int_0^t \ind_{\{y\}}(X_s) \,  L_m^X(\myd s,z) \, \myd f(z) \\ 
													&= 2 \int_{\{y\}} L_m^X(t,z) \, \myd f(z) \\ 
													&= 2 \,L_m^X(t,y) \,(f(y) - f(y-)), \qquad t < S_\infty^X,\ y \in \mathbb{R},\ \mathbf{P}\text{-a.s.} 
	\end{split}
\]
Together with Lemma \ref{lemma:connection_loc_times} this yields
\[
	\begin{split}
		L_+^X(t,y) - L_-^X(t,y) &= L_+^X(t,y) - 2 \,L_m^X(t,y) \, f(y-), \qquad t < S_\infty^X,\ y \in \mathbb{R}, \ \mathbf{P}\text{-a.s.}, 
	\end{split}
\]
and finally
\[
	L_m^X(t,y) = \frac{1}{2f(y-)} \, L_-^X(t,y) = L_m^X(t,y-), \qquad t < S_\infty^X,\ y \in \mathbb{R}\setminus F_-, \ \mathbf{P}\text{-a.s.},
\]
the desired result. 
\end{proof}

The last lemma which we give in this section is used below to infer the existence of certain stochastic integrals.
\begin{lemma}\label{lemma:integrability}
Let $(X,\mathbb{F})$ be a solution of Eq. (\ref{eqn:SDE_mvasd}). Then
\[
	\int_{\mathbb{R}} \frac{1}{f(y)} \, L_m^X(t,y) \, \myd y < +\infty, \qquad t 	< S_\infty^X, \ \mathbf{P}\text{-a.s.}
\]
and
\[
	\int_0^t \left(\frac{1}{f}(X_s)\right)^2 \, \myd \assPro{X}_s < +\infty, \qquad t 	< S_\infty^X, \ \mathbf{P}\text{-a.s.}
\]
hold true.
\end{lemma}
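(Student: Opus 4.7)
The plan is to prove the first integrability statement by a pathwise boundedness argument, and then deduce the second from it by inserting a suitable test function into the occupation times identity built into Definition \ref{def:solution}(iv).

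First I would exploit that, for each $t<S_\infty^X$, the map $y\mapsto L_m^X(t,y)$ has compact support and is c\`adl\`ag on $\mathbb{R}$, and is therefore bounded. The compact support is supplied by Corollary \ref{corr:comp_interval}, which states that $L_m^X(t,\cdot)$ vanishes outside the random interval $[\min_{0\le s\le t}X_s,\max_{0\le s\le t}X_s]$; this interval is compact because $X$ is real-valued and continuous on $[0,t]$ for $t<S_\infty^X$. The right-continuity in $y$ together with the existence of left-hand limits is built into Definition \ref{def:solution}(iv), so the standard compactness argument for c\`adl\`ag functions yields that $C_t:=\sup_{y\in\mathbb{R}}L_m^X(t,y)$ is a.s.\ finite on $\{t<S_\infty^X\}$. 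Combined with the local integrability of $1/f$, which is part of the definition of a drift function, this gives
\[
  \int_{\mathbb{R}} \frac{1}{f(y)} \, L_m^X(t,y) \, \myd y
  \;\le\; C_t \int_{\min_{0\le s\le t}X_s}^{\max_{0\le s\le t}X_s} \frac{1}{f(y)} \, \myd y
  \;<\; +\infty,
\]
which settles the first claim.

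For the second integral I would apply the occupation times identity from Definition \ref{def:solution}(iv) directly with the nonnegative measurable function $h(y):=1/f(y)^2$, which takes the value $+\infty$ precisely on $F_+$. Using the paper's conventions $0\cdot(+\infty)=0$ and $a\cdot(+\infty)=+\infty$ for $a\neq 0$, the right-hand side of that identity becomes
\[
  \int_{\mathbb{R}} \frac{1}{f(y)^2} \, L_m^X(t,y) \cdot 2 f(y) \, \myd y,
\]
and since $F_+$ has Lebesgue measure zero this reduces at once to $2\int_{\mathbb{R}}(1/f(y))\,L_m^X(t,y)\,\myd y$, which is finite by the first step.

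The only point that requires any care is the treatment of the singular set $F_+$ when inserting $h=1/f^2$ into the occupation times identity; this is resolved cleanly by the convention $0\cdot(+\infty)=0$ together with $\lambda(F)=0$. Everything else is a routine combination of compact support, c\`adl\`ag regularity of $L_m^X(t,\cdot)$, local integrability of $1/f$, and the defining identity in Definition \ref{def:solution}(iv).
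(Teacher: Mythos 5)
Your proposal is correct and follows essentially the same route as the paper: compact support of $L_m^X(t,\cdot)$ from Corollary \ref{corr:comp_interval} plus c\`adl\`ag boundedness and local integrability of $1/f$ for the first integral, and the occupation times identity of Definition \ref{def:solution}(iv) with $h=1/f^2$ to reduce the second integral to twice the first. The only difference is the order of the two steps (the paper reduces the second to the first before proving the first), which is immaterial.
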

\begin{proof} The occupation times formula for $L_m^X$ implies
\[\begin{split}
	\int_0^t \left(\frac{1}{f}(X_s)\right)^2 \, \myd \assPro{X}_s 
					&= \int_{\mathbb{R}} \left(\frac{1}{f(y)}\right)^2 \, L_m^X(t,y) \, 2 \, f(y) \, \myd y \\
					&= 2\int_{\mathbb{R}} \frac{1}{f(y)} \, L_m^X(t,y) \, \myd y, \qquad t < S_\infty^X, \ \mathbf{P}\text{-a.s.}
\end{split}\]
Therefore, it is enough to show the first statement. Because of Corollary \ref{corr:comp_interval} we have
\[
	\int_{\mathbb{R}} \frac{1}{f(y)} \, L_m^X(t,y) \, \myd y
	 = \int_{\left[\min\limits_{0\leq s \leq t} X_s, 
	   \max\limits_{0\leq s \leq t} X_s\right]} \frac{1}{f(y)} \, L_m^X(t,y) \, \myd y,
	   \qquad t < S_\infty^X, \ \mathbf{P}\text{-a.s.}
\]
Moreover, for $\mathbf{P}$-a.e. $\omega\in \Omega$ and $t < S_\infty^X(\omega)$ we can find a constant $K_t(\omega) > 0$ such that
$L_m^X(t, \, . \,)(\omega)$ is bounded by $K_t(\omega)$ on $\left[\min\limits_{0\leq s \leq t} X_s(\omega), \max\limits_{0\leq s \leq t} X_s(\omega)\right]$. For $\mathbf{P}$-a.e. $\omega \in \Omega$ and $t < S_\infty^X(\omega)$ it follows
\[\begin{split}
	\int_{\mathbb{R}} \frac{1}{f(y)} \, L_m^X(t,y)(\omega) \, \myd y 
		& \leq K_t(\omega) \int_{\left[\min\limits_{0\leq s \leq t}X_s(\omega), \max\limits_{0\leq s \leq t} X_s(\omega)\right]} \frac{1}{f(y)}
			\, \myd y 
		< +\infty,
\end{split}\]
where we used the fact that $1/f$ is locally integrable.
\end{proof}

\section{Space Transformation}\label{sec:space_transform}
\noindent As pointed out in the introduction equations of type (\ref{eqn:SDE_mvd}) with generalized drift are contained as a special case in the class of equations (\ref{eqn:SDE_mvasd}) with generalized and singular drift. Using a certain space transformation, the so-called Zvonkin transformation (see \cite{zvonkin}), it is well-known that Eq. (\ref{eqn:SDE_mvd}) can be reduced to an equation (\ref{eqn:SDE_without_drift}) without drift. Based on the generalized It\^o formula, this method has been used by many authors (see e.g. \cite{bass_chen}, \cite{engelbert_schmidt:1985}, \cite{engelbert_schmidt:1989_III}, \cite{LeGall_1984}, \cite{stroock_yor}) to study Eq. (\ref{eqn:SDE_mvd}). They were able to derive conditions on existence and uniqueness of solutions of Eq. (\ref{eqn:SDE_mvd}) from the well-known criteria on existence and uniqueness of solutions of Eq. (\ref{eqn:SDE_without_drift}). \\
\indent For the treatment of Eq. (\ref{eqn:SDE_mvasd}) we want to use a similar approach. A natural candidate for an appropriate transformation of Eq. (\ref{eqn:SDE_mvasd}) is the strictly increasing and continuous primitive
\[
	G(x) := \int_0^x \frac{1}{f(y)} \, \myd y, \qquad x \in \overline{\mathbb{R}},
\]
of the locally integrable reciprocal $1/f$ of the drift function $f$.
By $H$ we denote the inverse of $G$ given on $G(\mathbb{R})=(G(-\infty),G(+\infty))$. We extend the functions $H$, $f$, $1/f$ and $b$ by setting
\[
	H(x) := \left\{\begin{array}{ll}
							+ \infty, & x \in [G(+\infty),+\infty], \bigskip \\ 
							- \infty, & x \in [-\infty,G(-\infty)],
				       \end{array}\right.
\]
\[
	\frac{1}{f(\pm\infty)} = b(\pm\infty) := 0 \qquad \text{and} \qquad f(\pm \infty) := +\infty\,.
\]
Clearly, $H$ satisfies
\begin{equation}\label{eqn:representation_H}
	H(x) = \int_0^x (f \circ H)(y) \, \myd y, \qquad x \in \mathbb{R}\,.
\end{equation}
The open set $\mathbb{R}\setminus F$ can be uniquely decomposed into at most countably many open intervals, i.e.,
\begin{equation}\label{eqn:components}
	\mathbb{R}\setminus F = \bigcup_{i=0}^{|F|} \, (a_i,b_i)\,, \quad \text{where } a_i, b_i \in F \cup \{-\infty,+\infty\}
\end{equation}
and\footnote{$\mathbb{N}_0 = \{0,1,2,\ldots\}$} $|F| \in \mathbb{N}_0 \cup \{+\infty\}$ denotes the number of elements in $F$. Note that $|F| + 1$ is just the number of intervals $(a_i,b_i)$ in the representation (\ref{eqn:components}). Already in Corollary \ref{corollary:left_continuity} it turned out that the subset $F_-$ of $F$ takes up a special role. We remark that $F$ is countable if and only if $F_-$ is countable. Indeed, from (\ref{eqn:components}) the reader can see
\[
  \left\{x \in \mathbb{R}: \, (x,x+\varepsilon) \cap F\neq \emptyset, \, (x-\varepsilon,x) \cap F \neq \emptyset \ \forall\,\varepsilon > 0\right\} 		= F\setminus \bigcup_{i=0}^{|F|} \{a_i, b_i\}  \subseteq F_- \subseteq F,
\]
where on the left-hand side stands the set of all points which are accumulation points from the left and from the right in $F$. Hence, we can conclude. \\
\indent For a continuous $(\overline{\mathbb{R}}, \mathscr{B}(\overline{\mathbb{R}}))$-valued process $(Y,\mathbb{F})$ we introduce the $\mathbb{F}$-stopping time 
\[
	S^Y_{G(\mathbb{R})} := \inf\{t \geq 0: Y_t = G(+\infty) \text{ or } Y_t = G(-\infty)\}\,,
\]
Now, in full generality, we can give the following structure of a transformed solution $Y = G(X)$.
\begin{theorem}\label{theorem:general_structure_space_transformation}
	Let $(X,\mathbb{F})$ be a solution of Eq. (\ref{eqn:SDE_mvasd}) with generalized and singular drift. Then the process $(Y,\mathbb{F})$ defined by 
	$Y =G(X)$ is continuous with values in $(\overline{\mathbb{R}}, \mathscr{B}(\overline{\mathbb{R}}))$ and stopped when it leaves the open
	interval $G(\mathbb{R})$, i.e., $Y_t = Y_{t \wedge S^Y_{G(\mathbb{R})}}$, $t \geq 0$, $\mathbf{P}$-a.s. Moreover, setting $\sigma := (b/f)\circ H$,
	then $Y$ satisfies
	\[
		Y_t = Y_0 + \int_0^t \sigma(Y_s) \, \myd B_s + \sum_{i = 0}^{|F|} \left(L_m^X(t,a_i) - L_m^X(t,b_i-) \right), 
					\qquad t < S^Y_{G(\mathbb{R})}, \ \mathbf{P}\text{-a.s.},
	\]
	where in case $|F| = +\infty$ the process
	\[
		\left(\left(\displaystyle\sum_{i = 0}^{+\infty} \left(L_m^X(t,a_i) - L_m^X(t,b_i-) \right)\right)_{t \geq 0},\, 
		\mathbb{F}\right)
	\]
	fulfils the relation
	\[
		\lim_{n \rightarrow +\infty} \sup_{0 \leq t \leq T} \left| \sum_{i=0}^n \left( L_m^X(t,a_i) - L_m^X(t,b_i) \right) 
		- \sum_{i = 0}^{+\infty} \left(L_m^X(t,a_i) - L_m^X(t,b_i-) \right)\right| = 0
	\]
	in probability on $\{T < S^Y_{G(\mathbb{R})}\}$ for all $T \geq 0$.
\end{theorem}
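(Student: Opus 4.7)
Part (i) is immediate from the continuity and strict monotonicity of $G:\overline{\mathbb{R}}\to\overline{\mathbb{R}}$: $Y=G(X)$ is continuous $\overline{\mathbb{R}}$-valued, and since $G$ bijects $\mathbb{R}$ onto $G(\mathbb{R})$, we have $Y_t\in\overline{\mathbb{R}}\setminus G(\mathbb{R})$ iff $X_t=\pm\infty$, so $S^Y_{G(\mathbb{R})}=S^X_\infty$; the stopping identity then transfers directly from Definition~\ref{def:solution}(ii).

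For the semimartingale representation of $Y$, I would apply the It\^o--Tanaka formula piecewise on the components $(a_i,b_i)$ of $\mathbb{R}\setminus F$. The key observation is that on any compact subinterval $[c,d]\subset(a_i,b_i)$, both $f$ and $f_-$ are bounded away from zero, so $1/f$ is bounded and of locally bounded variation there, making $G$ a difference of two convex functions on $[c,d]$. For $X_0\in[c,d]$, the It\^o--Tanaka formula applied to $X$ stopped at the exit time $\tau_{[c,d]}$ of $[c,d]$ expresses $G(X_{t\wedge\tau_{[c,d]}})-G(X_0)$ as a stochastic integral against $dX$ plus a Stieltjes correction $\tfrac{1}{2}\int L^X_+(t\wedge\tau_{[c,d]},y)\,d(1/f)(y)$. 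Expanding the first summand via the SDE for $X$, invoking Theorem~\ref{theorem:supp_loc_time} (the measure $L^X_m(ds,y)$ is carried by $\{X_s=y\}$) and Lemma~\ref{lemma:connection_loc_times} (i.e.\ $L^X_+=2fL^X_m$), and using the Stieltjes identity $f\,d(1/f)=-(1/f_-)\,df$ valid on $(a_i,b_i)$, the two drift-type contributions cancel exactly, leaving
\[
    G(X_{t\wedge\tau_{[c,d]}})-G(X_0)=\int_0^{t\wedge\tau_{[c,d]}}\sigma(Y_s)\,dB_s.
\]
Thus, inside a single component $Y$ is a time-changed local martingale, and all drift of $Y$ must come from the behaviour of $X$ at $F$.

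To assemble the global decomposition, I would track $Y=G(X)$ across the endpoints $a_i,b_i\in F$ as $X$ transitions in and out of each component. The plan is to show that each component contributes exactly $L^X_m(t,a_i)-L^X_m(t,b_i-)$ to the drift of $Y$: the right-continuity of $L^X_m$ at $a_i$ (Definition~\ref{def:solution}(iv)) yields the $+L^X_m(t,a_i)$ contribution from inward approaches of $X$, while the left-limit at $b_i$, which exists by Corollary~\ref{corollary:left_continuity}, supplies $-L^X_m(t,b_i-)$. A further localization (stopping $X$ to stay in $[-N,N]$ and letting $N\to\infty$) then extends the identity up to $S^X_\infty$. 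For $|F|=+\infty$, the claimed uniform convergence in probability of the partial sums on $\{T<S^Y_{G(\mathbb{R})}\}$ follows from Corollary~\ref{corr:comp_interval}: $L^X_m(T,\cdot)$ is supported in the random compact interval $[\min_{s\le T}X_s,\max_{s\le T}X_s]$, so only finitely many components meet this interval almost surely and the tail of the sum vanishes uniformly on $[0,T]$.

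The \emph{main obstacle} will be the gluing step: identifying the boundary contributions precisely as $L^X_m(t,a_i)-L^X_m(t,b_i-)$ when $F$ may be complicated (e.g.\ Cantor-like, or containing accumulation points that are not isolated endpoints of any component). This relies on the asymmetric continuity of $L^X_m$ on $F_+$ versus $F_-$ encoded in Corollary~\ref{corollary:left_continuity}, and on a delicate limit argument as the stopping interval $[c,d]$ is expanded to $(a_i,b_i)$.
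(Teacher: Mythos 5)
Your part (i) and the overall strategy (approximate $G$ by functions that are differences of convex functions supported inside the components of $\mathbb{R}\setminus F$, apply the generalized It\^o formula, and simplify the drift via Theorem \ref{theorem:supp_loc_time}, Lemma \ref{lemma:connection_loc_times} and integration by parts) match the paper. But there are two genuine gaps. First, the gluing step that you yourself flag as ``the main obstacle'' is not resolved, and the localization you choose makes it unnecessarily hard: by stopping $X$ at the exit time of a compact subinterval $[c,d]\subset(a_i,b_i)$ you are forced into an excursion-by-excursion reassembly as $X$ leaves and re-enters each component, which is exactly the delicate argument you do not supply. The paper avoids this entirely: it applies It\^o \emph{globally} (no stopping) to $G_{n,k}(x)=\int_0^x \frac{1}{f}\ind_{\bigcup_{i=0}^n[p_k^i,q_k^i)}(y)\,\myd y$ with $p_k^i\downarrow a_i$, $q_k^i\uparrow b_i$; since $(g_{n,k}f)=\ind_{\bigcup[p_k^i,q_k^i)}$, the residual drift is read off as $\sum_{i=0}^n\bigl(L_m^X(t,p_k^i)-L_m^X(t,q_k^i)\bigr)$ with no gluing, and the boundary contributions $L_m^X(t,a_i)-L_m^X(t,b_i-)$ then come from letting $k\to+\infty$ using the right-continuity and left limits of $L_m^X$ in the space variable, together with Theorem \ref{theorem:convergence} and Lemma \ref{lemma:integrability} for the stochastic integrals.

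Second, your treatment of the case $|F|=+\infty$ is wrong. You argue that only finitely many components meet the random compact interval $\bigl[\min_{s\le T}X_s,\max_{s\le T}X_s\bigr]$, so the tail of the sum vanishes. This fails whenever $F$ has accumulation points inside that interval (e.g.\ $F=\{1/n\}\cup\{0\}$ or a Cantor-like $F$, which the paper explicitly admits): then infinitely many endpoints $a_i,b_i$ lie in the interval and infinitely many terms may be nonzero, so Corollary \ref{corr:comp_interval} does not truncate the sum. Note also that the theorem does not assert pointwise or absolute convergence of the series; the process $\sum_{i=0}^{+\infty}\bigl(L_m^X(t,a_i)-L_m^X(t,b_i-)\bigr)$ is \emph{defined} as $G(X_t)-G(X_0)-\int_0^t(b/f)(X_s)\,\myd B_s$, and the content of the last claim is that the partial sums converge to this process uniformly on $[0,T]$ in probability. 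That identification requires passing $n\to+\infty$ in the finite-$n$ identity via the monotone approximation $g_n\uparrow 1/f$ and Theorem \ref{theorem:convergence} (whose hypothesis is supplied by Lemma \ref{lemma:integrability}); this step is absent from your proposal.
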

\begin{remark}\label{remark:diffcoeff_infty}
	We point out that the statement of Theorem \ref{theorem:general_structure_space_transformation} includes the existence of the 
	stochastic integral appearing in the decomposition of $Y$, i.e., the property
	\[
		\int_0^t \sigma^2(Y_s) \, \myd s < +\infty, \qquad t < S^Y_{G(\mathbb{R})}, \ \mathbf{P}\text{-a.s.}
	\]
	In particular, because $|\sigma(x)| = +\infty$, $x \in G(F_+ \cap \{b \neq 0\})$,\footnote{For $A \subseteq \mathbb{R}$, $G(A)$ denotes the image of 
	the set $A$ under $G$.} from this follows that
	\begin{equation}\label{eqn:no_occ_time}
		\int_0^t \ind_{G(F_+ \cap \{b \neq 0\})} (Y_s) \, \myd s = 0, \qquad t < S^Y_{G(\mathbb{R})}, \ \mathbf{P}\text{-a.s.},
	\end{equation}
	i.e., $Y$ has no occupation time in $G(F_+ \cap \{b \neq 0\})$ $\mathbf{P}$-a.s. This can also be derived immediately from the fact that any 
	solution $(X,\mathbb{F})$ of Eq. (\ref{eqn:SDE_mvasd}) has no occupation time in $F_+ \cap \{b \neq 0\}$:
	\[\begin{split}
		\int_0^t \ind_{F_+ \cap \{b \neq 0\}} (X_s) \, \myd s 
				& = \int_0^t \ind_{F_+ \cap \{b \neq 0\}} (X_s) \, b^{-2}(X_s) \, \myd \assPro{X}_s \\
				& = \int_{F_+ \cap \{b \neq 0\}} \, L_+^X(t,x) \, b^{-2}(x) \, \myd x \\
				& = 0, \qquad t < S_\infty^X, \ \mathbf{P}\text{-a.s.}\,,
	\end{split}\]
	where we used the occupation times formula (\ref{eqn:occupationtime}) and the fact that $F_+$ is of Lebesgue measure zero. Clearly, this also 
	implies (\ref{eqn:no_occ_time}). It might be uncommon to consider SDEs with diffusion coefficient $\sigma$ taking also infinite values. However, 
	because of (\ref{eqn:no_occ_time}) we can alter $\sigma$ on $G(F_+ \cap \{b \neq 0\})$ by setting, e.g.,
	\[
		\widetilde{\sigma} = \sigma \, \ind_{\mathbb{R}\setminus G(F_+ \cap \{b \neq 0\})} + \ind_{G(F_+ \cap \{b \neq 0\})}
	\]
	and replace $\sigma$ by $\widetilde{\sigma}$ without changing the statement of Theorem \ref{theorem:general_structure_space_transformation}. 
	In the following we still use 
	$\sigma = (b/f)\circ H$ but keep in mind that, always when it is necessary, we can use an appropriate real-valued coefficient. In this context 
	we also refer to Remark \ref{remark:occupation_time_in_zero}. \mydia
\end{remark}
\begin{proof}[Proof of Theorem \ref{theorem:general_structure_space_transformation}]
Since $(X,\mathbb{F})$ is a solution of Eq. (\ref{eqn:SDE_mvasd}), the process $(Y,\mathbb{F})$ is obviously continuous with values in $(\overline{\mathbb{R}}, \mathscr{B}(\overline{\mathbb{R}}))$ such that $Y_0 \in G(\mathbb{R})$ $\mathbf{P}$-a.s. Clearly,
\[
	S_\infty^X = \inf\{t \geq 0: X_t \notin \mathbb{R}\} = \inf\{t \geq 0: Y_t \notin G(\mathbb{R})\} = S^Y_{G(\mathbb{R})}
\]
and from Definition \ref{def:solution}(ii) we obtain immediately 
\[
	Y_t = G(X_t) 
			= G(X_{t \wedge S_\infty^X}) 
			= G(X_{t \wedge S^Y_{G(\mathbb{R})}}) 
			= Y_{t \wedge S^Y_{G(\mathbb{R})}}, \qquad t \geq 0, \ \mathbf{P}\text{-a.s.}
\]
To show the claimed structure of $Y$, one is tempted to apply the generalized It\^o formula (\ref{eqn:gen_ito_formula}), but $G$ restricted to $\mathbb{R}$ is, in general, not the difference of convex functions which is caused by the potentially non-empty set $F$. To overcome this difficulty, we use the decomposition (\ref{eqn:components}) of the open set $\mathbb{R}\setminus F$. For every  $i \in \{0, \ldots, |F|\} \cap \mathbb{N}_0$ we choose two sequences $(p_k^i)_{k \in \mathbb{N}}$ and $(q_k^i)_{k \in \mathbb{N}}$ such that
\[
	a_i < p_{k+1}^i < p_k^i < q_k^i < q_{k+1}^i < b_{i}, \ k \in\mathbb{N}, \qquad \lim_{k \rightarrow +\infty} p_k^i = a_i 
	\qquad \text{and} \qquad \lim_{k \rightarrow +\infty} q_k^{i} = b_{i}
\]
are satisfied and define
\[
	g_{n,k} := \frac{1}{f} \, \ind_{\bigcup\limits_{i=0}^n \left[p_k^i, q_k^i \right)} 
					= \sum\limits_{i=0}^n \frac{1}{f} \, \ind_{\left[p_k^i\, , \, q_k^i\right)}, \qquad k \in \mathbb{N}, \ n \in  \{0, \ldots, |F|\} \cap 
					\mathbb{N}_0\,,
\]
as well as
\[
	g_n := \frac{1}{f} \, \ind_{\bigcup\limits_{i=0}^n \left(a_i, b_i \right)}, \qquad n \in  \{0, \ldots, |F|\} \cap \mathbb{N}_0\,.
\]
Moreover, we introduce the increasing and continuous functions
\[
	G_{n,k}(x) := \int_0^x g_{n,k} (y) \, \myd y, \qquad x \in \overline{\mathbb{R}}, \ k \in \mathbb{N}, \ n \in \{0, \ldots, |F|\} \cap \mathbb{N}_0\,,
\]
and
\[
	G_n(x) := \int_0^x g_n (y) \, \myd y, \qquad x \in \overline{\mathbb{R}}, \ n \in \{0, \ldots, |F|\} \cap \mathbb{N}_0\,.
\]
Clearly, the non-negative right-continuous functions $g_{n,k}$, $k \in \mathbb{N}$, $n \in  \{0, \ldots, |F|\} \cap \mathbb{N}_0$, are of locally bounded variation, since every summand in the finite sum of the definition of $g_{n,k}$ is of locally bounded variation. Therefore, for arbitrary $n,k \in \mathbb{N}$ we conclude that $G_{n,k}$ restricted to $\mathbb{R}$ is the difference of convex functions. Applying the generalized It\^o formula (\ref{eqn:gen_ito_formula}), we deduce
\begin{equation}\label{eqn:zwischenrechnung78}
	\begin{split}
		G_{n,k}(X_t)&= G_{n,k}(X_0) + \int_0^t g_{n,k}(X_s -) \, \myd X_s + \frac{1}{2} \int_\mathbb{R} L_+^X(t,y) \, \myd g_{n,k}(y) \\
								&= G_{n,k}(X_0) + \int_0^t g_{n,k}(X_s)\, b(X_s) \, \myd B_s + \int_0^t g_{n,k}(X_s-) \int_\mathbb{R} L_m^X(\myd s,y)\,\myd f(y) \\
								&\phantom{=======} + \frac{1}{2} \int_\mathbb{R} L_+^X(t,y) \, \myd g_{n,k}(y), 
	\end{split}
\end{equation}
$t < S^Y_{G(\mathbb{R})}, \ k \in \mathbb{N}, \ n \in \{0,\ldots,|F|\} \cap \mathbb{N}_0, \ \mathbf{P}\text{-a.s.}$
For the third summand in this decomposition we obtain
\[\begin{split}
	\int_0^t g_{n,k}(X_s-) \, \int_{\mathbb{R}} L_m^X(\myd s,y) \, \myd f(y)
		&= \int_{\mathbb{R}} \int_0^t g_{n,k}(X_s-) \, L_m^X(\myd s,y) \, \myd f(y) \\
		&= \int_{\mathbb{R}} g_{n,k}(y-) \, L_m^X(t,y) \, \myd f(y),
\end{split}\]
$t < S^Y_{G(\mathbb{R})}, \ k \in \mathbb{N}, \ n \in \{0,\ldots,|F|\} \cap \mathbb{N}_0, \ \mathbf{P}\text{-a.s.}$, where we used Theorem \ref{theorem:supp_loc_time}. Moreover, integration by parts gives
\[\begin{split}
	\int_{\mathbb{R}} g_{n,k}(y-) \, L_m^X(t,y) \, \myd f(y)
		&= \int_{\mathbb{R}} L_m^X(t,y) \, \myd (g_{n,k} f)(y) - \int_{\mathbb{R}} L_m^X(t,y) \, f(y) \, \myd g_{n,k}(y) \\
		&= \int_{\mathbb{R}} L_m^X(t,y) \, \myd (g_{n,k} f)(y) - \frac{1}{2}\int_{\mathbb{R}} L_+^X(t,y) \, \myd g_{n,k}(y), 
\end{split}\]
$t < S^Y_{G(\mathbb{R})}, \ k \in \mathbb{N}, \ n \in \{0,\ldots,|F|\} \cap \mathbb{N}_0, \ \mathbf{P}\text{-a.s.}$, where in the last step we have applied Lemma \ref{lemma:connection_loc_times}. Finally, (\ref{eqn:zwischenrechnung78}) becomes
\[
	G_{n,k}(X_t) = G_{n,k}(X_0) + \int_0^t g_{n,k}(X_s)\, b(X_s) \, \myd B_s + \int_{\mathbb{R}} L_m^X(t,y) \, \myd (g_{n,k} f)(y), 
\]
$t < S^Y_{G(\mathbb{R})}, \ k \in \mathbb{N}, \ n \in \{0, \ldots, |F|\} \cap \mathbb{N}_0, \ \mathbf{P}\text{-a.s.}$ The integrator in the last term on the right-hand side has the following structure
\[
	g_{n,k}f = \ind_{\bigcup\limits_{i=0}^n \left[p_k^i,q_k^i \right)} = \sum_{i=0}^n \ind_{\left[p_k^i,q_k^i \right)}.
\]
Hence, calculating this integral, we obtain
\begin{equation}\label{eqn:approx_G_nk}
	G_{n,k}(X_t) = G_{n,k}(X_0) + \int_0^t g_{n,k}(X_s)\, b(X_s) \, \myd B_s + \sum_{i=0}^n \left(L_m^X(t,p_k^i) - L_m^X(t,q_k^i) \right),
\end{equation}
$t < S^Y_{G(\mathbb{R})}, \ k \in \mathbb{N}, \ n \in \{0, \ldots, |F|\} \cap \mathbb{N}_0, \ \mathbf{P}\text{-a.s.}$ \\
\indent Now, for arbitrary $n \in \{0, \ldots, |F|\} \cap \mathbb{N}_0$ we pass to the limit $k \rightarrow +\infty$. We observe
\[
	g_{n,k} \leq g_{n,k+1} \leq \frac{1}{f}, \qquad k \in \mathbb{N}, 
	\qquad \text{and} \qquad 
	\lim_{k \rightarrow +\infty} g_{n,k} = g_n \qquad \lambda\text{-a.e.}\footnote{$\lambda$ denotes the Lebesgue measure.}
\]
Together with Lemma \ref{lemma:integrability} this implies that the conditions of Theorem \ref{theorem:convergence} are fulfilled. For every $t \geq 0$ we obtain in probability
\[
	\lim_{k \rightarrow +\infty} \int_0^t g_{n,k}(X_s) \, b(X_s) \, \myd B_s = \int_0^t g_{n}(X_s) \, b(X_s) \, \myd B_s 
	\qquad \text{on } \{t < S^Y_{G(\mathbb{R})}\}.
\]
Since clearly
\[
	\lim_{k \rightarrow +\infty} G_{n,k}(x) = G_n(x), \qquad x \in \mathbb{R},
\]
and because of the continuity properties of $L_m^X$, for every $t \geq 0$, in addition we get in probability
\[\begin{split}
	& \lim_{k \rightarrow +\infty} \left(G_{n,k}(X_t) - G_{n,k}(X_0) - \sum_{i=0}^n \left(L_m^X(t,p_k^i) - L_m^X(t,q_k^i)\right)\right) \\
	& \phantom{======} = G_{n}(X_t) - G_{n}(X_0) - \sum_{i=0}^n \left(L_m^X(t,a_i) - L_m^X(t,b_i-)\right)
								    \qquad \text{on } \{t < S^Y_{G(\mathbb{R})}\}.
\end{split}\]
Finally, passing to the limit $k \rightarrow +\infty$ in (\ref{eqn:approx_G_nk}), for every $t \geq 0$ we conclude 
\[
	G_{n}(X_t) = G_{n}(X_0) + \int_0^t g_{n}(X_s) \, b(X_s) \, \myd B_s + \sum_{i=0}^n \left(L_m^X(t,a_i) - L_m^X(t,b_i-)\right)
\]
$\mathbf{P}$-a.s. on $\{t < S^Y_{G(\mathbb{R})}\}$. From the continuity in $t$ of the involved processes and by the arbitrariness of $n \in \{0, \ldots, |F|\} \cap \mathbb{N}_0$ it follows
\begin{equation}\label{eqn:limit_k}
	G_{n}(X_t) = G_{n}(X_0) + \int_0^t g_{n}(X_s) \, b(X_s) \, \myd B_s + \sum_{i=0}^n \left(L_m^X(t,a_i) - L_m^X(t,b_i-)\right),
\end{equation}
$t < S^Y_{G(\mathbb{R})}$, $n \in \{0, \ldots, |F|\} \cap \mathbb{N}_0$, $\mathbf{P}\text{-a.s.}$ \\
\indent In case of a finite set $F$, i.e., $|F| \in \mathbb{N}_0$, the claim of the theorem is proven. We only need to choose $n=|F|$. Then, since $g_{|F|} = \frac{1}{f}$ $\lambda$-a.e. and $G_{|F|} = G$, from (\ref{eqn:limit_k}) we conclude the desired form
\[
	Y_t = Y_0 + \int_0^t \sigma(Y_s) \, \myd B_s + \sum_{i = 0}^{|F|} \left(L_m^X(t,a_i) - L_m^X(t,b_i-) \right), 
	\qquad t < S^Y_{G(\mathbb{R})}, \ \mathbf{P}\text{-a.s.}
\]
\indent In case $|F| = +\infty$, now we pass to the limit $n \rightarrow +\infty$ in (\ref{eqn:limit_k}). By the definition of the functions $g_n$, $n \in \mathbb{N}_0$, we have
\[
	g_n \leq g_{n+1} \leq \frac{1}{f}, \quad  n \in \mathbb{N}_0, \qquad \text{and} \qquad	
	\lim_{n \rightarrow +\infty} g_n = \frac{1}{f} \qquad \lambda\text{-a.e.}
\]
Again, together with Lemma \ref{lemma:integrability} the assumptions of Theorem \ref{theorem:convergence} are fulfilled. For every $T \geq 0$ we derive 
\[
	\lim_{n \rightarrow +\infty} \sup_{0 \leq t \leq T} \left| \int_0^t g_n(X_s)\, b(X_s) \, \myd B_s -  \int_0^t \frac{b}{f}(X_s) \, \myd B_s\right| 
		= 0
\]
in probability on $\{T < S^Y_{G(\mathbb{R})}\}$. Since for all $T \geq 0$
\[
	\lim_{n \rightarrow +\infty} \sup_{x\in[-K,K]} |G_n(x) - G(x)| = 0, \qquad K \in\mathbb{N}\,,
\]
holds true, from (\ref{eqn:limit_k}) it follows
\[
	\lim_{n \rightarrow +\infty} \sup_{0 \leq t \leq T} \left| \sum_{i=0}^n \left(L_m^X(t,a_i) - L_m^X(t,b_i-)\right) - \left(G(X_t) - G(X_0) - \int_0^t \frac{b}{f}(X_s)\, \myd B_s\right) \right| = 0
\]
in probability on $\{T < S^Y_{G(\mathbb{R})}\}$. Defining the $\mathbb{F}$-adapted process 
\[
	\sum_{i = 0}^{+\infty} \left(L_m^X(t,a_i) - L_m^X(t,b_i-) \right) := G(X_t) - G(X_0) - \int_0^t \frac{b}{f}(X_s)\, \myd B_s, \qquad t \geq 0\,,
\]
we can conclude
\[
	Y_t = Y_0 + \int_0^t \sigma(Y_s) \, \myd B_s + \sum_{i = 0}^{+\infty} \left(L_m^X(t,a_i) - L_m^X(t,b_i-) \right), 
	\qquad t < S^Y_{G(\mathbb{R})}, \ \mathbf{P}\text{-a.s.},
\]
and the claim is also proven in case $|F| = +\infty$.
\end{proof}

Note that in case of Eq. (\ref{eqn:SDE_mvd}) expressed as an equation of type (\ref{eqn:SDE_mvasd}) the drift function is $f_\nu$ as introduced before (\ref{eqn:drift_measure}) and the corresponding set $F$ is empty. Hence, Theorem \ref{theorem:general_structure_space_transformation} contains the result (see e.g. \cite{engelbert_schmidt:1985}, Proposition 1) that Eq. (\ref{eqn:SDE_mvd}) can be transformed to an equation (\ref{eqn:SDE_without_drift}) without drift. However, the special feature of Eq. (\ref{eqn:SDE_mvasd}) is that, entailed by the set of singularities $F$, after applying the space transformation $G$ it cannot be excluded that there remains a drift term. \\
\indent As announced in the introduction, in our investigation we want to stay in the framework of continuous semimartingales. In the semimartingale case we want to concretise the remaining drift appearing in the decomposition of $Y = G(X)$. We point out in Theorem \ref{theorem:a_prioiri_semimart}, if $(Y,\mathbb{F}) = (G(X),\mathbb{F})$ is a continuous semimartingale up to $S^Y_{G(\mathbb{R})}$ then the decomposition of $Y$ has the form
\begin{equation}\label{eqn:SDE_transformed_eqn}
	Y_t = Y_0 + \int_0^t\sigma(Y_s) \, \myd B_s + \int_0^t \ind_{G(F_-)}(Y_s)\, \myd Y_s, \qquad t < S^Y_{G(\mathbb{R})}, \ \mathbf{P}\text{-a.s.}
\end{equation}
Consequently, under the a priori knowledge that $Y$ is a semimartingale up to $S^Y_{G(\mathbb{R})}$, in general, there remains a drift part that lives on the times when the process $Y$ is in the set $G(F_-)$. Note, if $(Y,\mathbb{F})$ is a continuous semimartingale up to $S^Y_{G(\mathbb{R})}$, then the term $\int_0^\cdot \ind_{G(F_-)}(Y_s)\, \myd Y_s$ in (\ref{eqn:SDE_transformed_eqn}) is actually a drift, i.e., of locally bounded variation. Indeed, let
\begin{equation}\label{eqn:semimartdecomp_of_Y}
	Y_t = Y_0 + M_t + V_t, \qquad t < S^Y_{G(\mathbb{R})}, \ \mathbf{P}\text{-a.s.}
\end{equation}
be the unique continuous semimartingale decomposition (see (\ref{eqn:semi_decomposition})) of $Y$. Since $F_-$ and therefore $G(F_-)$ are of Lebesgue measure zero, it follows immediately
\begin{equation}\label{eqn:drift_int_with_respect_to_process}
	\begin{split}
		\int_0^t \ind_{G(F_-)}(Y_s) \, \myd Y_s & = \int_0^t \ind_{G(F_-)}(Y_s) \, \myd M_s + \int_0^t \ind_{G(F_-)}(Y_s) \, \myd V_s \\
																						& = \int_0^t \ind_{G(F_-)}(Y_s) \, \myd V_s, \qquad t < S^Y_{G(\mathbb{R})}, \ \mathbf{P}\text{-a.s.}
	\end{split}
\end{equation}
Furthermore, taking (\ref{eqn:SDE_transformed_eqn}) into account, by the uniqueness of the continuous semimartingale decomposition of $Y$ we conclude
\[
	V_t = \int_0^t \ind_{G(F_-)}(Y_s) \, \myd Y_s, \qquad t < S^Y_{G(\mathbb{R})}, \ \mathbf{P}\text{-a.s.}
\]
\indent To handle the semimartingale case, we consider (\ref{eqn:SDE_transformed_eqn}) as a self-contained equation and fix the following definition of a solution.
\begin{defi}\label{def:sol_trans_eqn}
	A continuous $(\overline{\mathbb{R}}, \mathscr{B}(\overline{\mathbb{R}}))$-valued stochastic process $(Y,\mathbb{F})$ 
	defined on a probability space $(\Omega,\mathcal{F},\mathbf{P})$ is called a solution (up to the first exit from the open interval $G(\mathbb{R})$) 
	of Eq. (\ref{eqn:SDE_transformed_eqn})	if the following conditions are fulfilled: \medskip

	(i)   $Y_0 \in G(\mathbb{R})$. \medskip
	
	(ii)  $Y_t = Y_{t \wedge S^Y_{G(\mathbb{R})}}$, $t \geq 0$, $\mathbf{P}$-a.s. \medskip
	
	(iii) $(Y,\mathbb{F})$ is a semimartingale up to $S^Y_{G(\mathbb{R})}$. \medskip
	
	(iv)  There exists a Wiener process $(B,\mathbb{F})$ such that Eq. (\ref{eqn:SDE_transformed_eqn}) is satisfied for all 
								 					 $t < S^Y_{G(\mathbb{R})}$ $\mathbf{P}$-a.s. 
\end{defi}

Before we characterize solutions $(Y,\mathbb{F}) = (G(X),\mathbb{F})$ of Eq. (\ref{eqn:SDE_transformed_eqn}) by its semimartingale property, we show the following
\begin{lemma}\label{lemma:relation_loc_times}
	Let $(X,\mathbb{F})$ be a solution of Eq. (\ref{eqn:SDE_mvasd}). If the transformed process $(Y,\mathbb{F}) = (G(X), 
	\mathbb{F})$ is a continuous semimartingale up to $S^Y_{G(\mathbb{R})}$, then we have
	\[
		L_m^X(t,x\pm) = \frac{1}{2} \, L^Y_\pm(t,G(x)), \qquad t <  S^Y_{G(\mathbb{R})}, \ x \in \mathbb{R}, \ \mathbf{P}\text{-a.s.}
	\]
\end{lemma}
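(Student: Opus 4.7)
My plan is to identify $L_+^Y(t,G(x))$ with $2L_m^X(t,x)$ (and the left-continuous versions analogously) by matching the occupation-times identities for $Y$ and for $X$ under the change of variables $y=G(x)$. Since $(Y,\mathbb{F})$ is by assumption a continuous semimartingale up to $S^Y_{G(\mathbb{R})}$, it possesses semimartingale local times $L^Y_\pm$. From Theorem \ref{theorem:general_structure_space_transformation} together with the uniqueness of the continuous semimartingale decomposition, the continuous martingale part of $Y$ is forced to be $\int_0^\cdot \sigma(Y_s)\,\myd B_s$ with $\sigma = (b/f)\circ H$, and hence
\[
	\myd\langle Y\rangle_s = \sigma^2(Y_s)\,\myd s = f^{-2}(X_s)\,\myd\langle X\rangle_s, \qquad s < S^Y_{G(\mathbb{R})}.
\]
This identification is meaningful even though $f$ may vanish: the occupation-times formula for $X$ and Lebesgue-nullity of $F$ imply that $\langle X\rangle$ does not charge $\{f(X_s)=0\}$.

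Next, I would apply the occupation-times formula for $Y$ to an arbitrary non-negative measurable $h$ and transport the resulting integral to $X$ via $Y = G(X)$:
\[
	\int_{\mathbb{R}} h(y)\,L_+^Y(t,y)\,\myd y = \int_0^t h(Y_s)\,\myd\langle Y\rangle_s = \int_0^t (h\circ G)(X_s)\,f^{-2}(X_s)\,\myd\langle X\rangle_s.
\]
Invoking Definition \ref{def:solution}(iv) with $m(\myd x) = 2f(x)\,\myd x$ rewrites the right-hand side as $2\int_{\mathbb{R}}(h\circ G)(x)\,f^{-1}(x)\,L_m^X(t,x)\,\myd x$. The substitution $y=G(x)$, legitimate on the full-measure set $\mathbb{R}\setminus F$, then turns this into $2\int_{G(\mathbb{R})} h(y)\,L_m^X(t,H(y))\,\myd y$. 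Since $h\geq 0$ is arbitrary, this yields the Lebesgue-a.e.\ identity
\[
	L_+^Y(t,y) = 2\,L_m^X(t,H(y)) \qquad \text{for Lebesgue-a.e.\ } y,\ \mathbf{P}\text{-a.s.}
\]

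Finally, both sides are right-continuous in $y$: continuity and strict monotonicity of $H$ give $H(y+) = H(y)$, and $L_m^X(t,\cdot)$ is right-continuous by Definition \ref{def:solution}(iv). Therefore the displayed identity holds for every $y\in\mathbb{R}$, and specializing $y = G(x)$ produces the claim for $L_+^Y$. Passing to left-hand limits in $y$ and using $L_+^Y(t,y-) = L_-^Y(t,y)$ together with the fact that $H(y')$ approaches $H(y)$ strictly from the left as $y'\uparrow y$ gives $L_-^Y(t,G(x)) = 2L_m^X(t,x-)$, which completes the proof. The only delicate ingredient is that the change of variables $y=G(x)$ absorb the set $F$ without loss; this rests on $|G(F)| = \int_F f^{-1}(y)\,\myd y = 0$, which is immediate from local integrability of $1/f$ and Lebesgue-nullity of $F$.
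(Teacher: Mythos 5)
Your route is genuinely different from the paper's. The paper obtains the identity in one line by invoking the transformation lemma for local times (Lemma \ref{lemma:conversion_loc_time}, the Assing--Schmidt-type result) with the transformation $G$, which gives $L_\pm^{Y}(t,G(x)) = L_\pm^X(t,x)\,f^{-1}(x\pm)$ off $F$, and then converts $L_\pm^X$ into $L_m^X$ via Lemma \ref{lemma:connection_loc_times} and extends over $F$ by continuity. You instead re-derive the relation from scratch by matching the two occupation-time densities under the change of variables $y=G(x)$, obtaining the identity Lebesgue-a.e.\ and then upgrading to every point by one-sided continuity. This is a legitimate and in fact more self-contained argument (it avoids the external Lemma \ref{lemma:conversion_loc_time}), and your handling of the null set $F$, of the substitution $\myd y = f^{-1}(x)\,\myd x$, and of the left-limit version via $L_-^Y(t,y)=L_+^Y(t,y-)$ and the strict monotonicity of $H$ is all correct. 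Note that both routes hinge on the same analytic input, namely $\myd\assPro{Y}_s=\sigma^2(Y_s)\,\myd s$: in the paper this is exactly the quadratic-variation hypothesis of Lemma \ref{lemma:conversion_loc_time}, which the authors declare to be ``obviously'' satisfied.

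There is, however, a genuine gap in how you justify that input. You argue that Theorem \ref{theorem:general_structure_space_transformation} plus uniqueness of the continuous semimartingale decomposition ``forces'' the martingale part of $Y$ to be $\int_0^\cdot\sigma(Y_s)\,\myd B_s$. Uniqueness of the decomposition only identifies the martingale part once you know that the complementary term is of locally bounded variation; but the residual process $\Sigma_t=\sum_i\bigl(L_m^X(t,a_i)-L_m^X(t,b_i-)\bigr)$ in Theorem \ref{theorem:general_structure_space_transformation} is, at this stage, only an adapted continuous process obtained as a u.c.p.\ limit of bounded-variation processes --- such a limit need not be of bounded variation, and establishing that it is is precisely the content of Theorems \ref{theorem:a_prioiri_semimart} and \ref{theorem:a_prioiri_semimart_alt_formulation}, whose proofs use the present lemma. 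As written your step is therefore circular. The identity $\myd\assPro{Y}_s=\sigma^2(Y_s)\,\myd s$ is nevertheless true and can be proved without circularity: since $H$ restricted to $G(\mathbb{R})$ is a difference of convex functions (its derivative $f\circ H$ is of locally bounded variation there), the generalized It\^o formula applied to $X=H(Y)$ gives
\[
	\int_0^t b^2(X_s)\,\myd s = \assPro{X}_t = \int_0^t (f\circ H)^2(Y_s)\,\myd\assPro{Y}_s , \qquad t<S^Y_{G(\mathbb{R})},
\]
and since the measure $\myd\assPro{Y}_s$ does not charge $\{s: Y_s\in G(F_+)\}$ (by the occupation times formula for $Y$ and $\lambda(G(F_+))=0$), one may divide by $(f\circ H)^2(Y_s)=f^2(X_s)$ to obtain $\myd\assPro{Y}_s=\sigma^2(Y_s)\,\myd s$. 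With this repair your argument is complete.
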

\begin{proof} Obviously, the requirements of Lemma \ref{lemma:conversion_loc_time} are satisfied and together with Lemma \ref{lemma:connection_loc_times} we can conclude 
\[
	L_\pm^Y(t, G(x)) = L_\pm^X(t,x) \, \frac{1}{f(x\pm)} = 2 \, L_m^X(t,x\pm), 
	\qquad t < S^Y_{G(\mathbb{R})}, \ x\in\mathbb{R}\setminus F, \ \mathbf{P}\text{-a.s.}
\]
Finally, the continuity properties of $L_\pm^Y$ and $L_m^X$ and the fact that $F$ is of Lebesgue measure zero imply the claim.
\end{proof}
\begin{theorem}\label{theorem:a_prioiri_semimart}
	Let $(X,\mathbb{F})$ be a solution of Eq. (\ref{eqn:SDE_mvasd}). Then $(Y,\mathbb{F}) = (G(X), \mathbb{F})$ is a solution of Eq. 
	(\ref{eqn:SDE_transformed_eqn}) if and only if	$(Y,\mathbb{F})$ is a continuous semimartingale up to $S^Y_{G(\mathbb{R})}$.
\end{theorem}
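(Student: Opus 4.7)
The implication from $Y$ being a solution of Eq.~(\ref{eqn:SDE_transformed_eqn}) to $Y$ being a continuous semimartingale up to $S^Y_{G(\mathbb{R})}$ is immediate from Definition \ref{def:sol_trans_eqn}(iii). For the converse, assume $(Y,\mathbb{F}) = (G(X),\mathbb{F})$ is a continuous semimartingale up to $S^Y_{G(\mathbb{R})}$. Conditions (i) and (ii) of Definition \ref{def:sol_trans_eqn} are already contained in Theorem \ref{theorem:general_structure_space_transformation}, so the task is to derive the integral equation appearing in condition (iv). Writing the canonical semimartingale decomposition $Y = Y_0 + M + V$ and invoking Theorem \ref{theorem:general_structure_space_transformation} to obtain the alternative decomposition $Y_t = Y_0 + \int_0^t \sigma(Y_s)\,\myd B_s + D_t$ with $D_t = \sum_i(L_m^X(t,a_i) - L_m^X(t,b_i-))$, uniqueness of the canonical decomposition forces $M_t = \int_0^t \sigma(Y_s)\,\myd B_s$ and $V_t = D_t$. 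It thus remains to establish the identity $V_t = \int_0^t \ind_{G(F_-)}(Y_s)\,\myd Y_s$.

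Splitting $\myd Y = \myd M + \myd V$, the stochastic integral $\int_0^\cdot \ind_{G(F_-)}(Y_s)\,\myd M_s$ has quadratic variation $\int_0^t \ind_{G(F_-)}(Y_s)\sigma^2(Y_s)\,\myd s = \int_{G(F_-)} L_+^Y(t,y)\,\myd y$ by the occupation times formula, and this vanishes because $G$ is absolutely continuous (being the primitive of the locally integrable $1/f$) and therefore maps the Lebesgue-null set $F_-$ to a Lebesgue-null set. Hence it suffices to show that the finite-variation measure $\myd V = \myd D$ is carried by $\{s : X_s \in F_-\}$. Theorem \ref{theorem:supp_loc_time} tells us that $L_m^X(\myd s, a_i)$ charges only $\{X_s = a_i\}$ and $L_m^X(\myd s, b_i-)$ only $\{X_s = b_i\}$; together with the sup-norm convergence of the series from Theorem \ref{theorem:general_structure_space_transformation}, the problem reduces to showing that the contributions of endpoints $a_i, b_i \notin F_-$ cancel in the limit.

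For a finite left endpoint $a_i \notin F_-$ we first rule out that $a_i$ is a left accumulation point of $F$: along a sequence $x_n \uparrow a_i$ with $x_n \in F$, either infinitely many $x_n$ lie in $F_+$, so that $f(a_i-) = \lim_n f(x_n) = 0$, or infinitely many lie in $F_-$, so that $f(a_i-) = \lim_n f_-(x_n) = 0$ by left-continuity of $f_-$; both conclusions contradict $a_i \notin F_-$. Hence $a_i$ coincides with some right endpoint $b_{i'}$ (also outside $F_-$), and Corollary \ref{corollary:left_continuity} yields $L_m^X(t,a_i) = L_m^X(t,a_i-) = L_m^X(t,b_{i'}-)$, giving pairwise cancellation. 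Infinite endpoints contribute nothing by the convention $L_m^X(t, \pm\infty) = 0$. The only surviving terms come from finite right endpoints $b_j \notin F_-$ that are left accumulation points of $F$ on the right and hence unmatched by any $a_{i'}$; these we handle by a telescoping limit along the adjacent components $(a_{k_n},b_{k_n})$ with $a_{k_n} \downarrow b_j$, using right-continuity of $L_m^X(t,\cdot)$ in the state variable, Corollary \ref{corollary:left_continuity} applied at $b_j$, and the uniform convergence of the defining series. We expect this final telescoping step to be the principal technical obstacle when $F$ has a dense accumulation structure, and it requires careful use of the component ordering dictated by Theorem \ref{theorem:general_structure_space_transformation} together with the uniformity of the convergence there.
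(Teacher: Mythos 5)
The first serious step of your converse direction already contains a genuine gap. You write the canonical decomposition $Y=Y_0+M+V$, invoke Theorem \ref{theorem:general_structure_space_transformation} to get $Y_t=Y_0+\int_0^t\sigma(Y_s)\,\myd B_s+D_t$ with $D_t=\sum_i\bigl(L_m^X(t,a_i)-L_m^X(t,b_i-)\bigr)$, and then appeal to ``uniqueness of the canonical decomposition'' to conclude $M=\int_0^\cdot\sigma(Y_s)\,\myd B_s$ and $V=D$. But uniqueness of the decomposition only applies when \emph{both} competing finite-variation parts are actually of locally bounded variation; here $D$ is only known as a ucp-limit in probability of partial sums, and establishing that $D$ is of locally bounded variation is precisely the content of Theorem \ref{theorem:a_prioiri_semimart_alt_formulation}, i.e.\ essentially the assertion to be proved. (The general principle you are invoking is false without that: $W=W+0=0+W$ for a Wiener process $W$.) All you can legitimately deduce at this stage is that $D-V$ is a continuous local martingale, not that it vanishes. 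The second gap is one you acknowledge yourself: the pairwise-cancellation argument works for endpoints of components that are not one-sided accumulation points of $F$, but the ``telescoping limit'' for a right endpoint $b_j\in F_+\setminus F_-$ that is accumulated by $F$ from the right is left unproved. Since the series defining $D$ converges only uniformly on compacts in probability and is not known to converge absolutely, rearranging it to effect cancellation is not justified; for a Cantor-like $F$ there is no ordered sequence of adjacent components decreasing to $b_j$, and this step does not go through as sketched.

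The paper's proof sidesteps the combinatorics of $F$ entirely. Instead of the spatial decomposition into components, it uses the monotone truncations $\widetilde g_n=\frac1f\wedge n$ with primitives $\widetilde G_n$, applies the generalized It\^o formula to $\widetilde G_n(X)$ to obtain $\widetilde G_n(X_t)=\widetilde G_n(X_0)+\int_0^t\widetilde g_n(X_s)b(X_s)\,\myd B_s+\int_{\mathbb R}L_m^X(t,y)\,\myd(\widetilde g_nf)(y)$, and then — this is where the semimartingale hypothesis on $Y$ enters, via Lemma \ref{lemma:relation_loc_times} — rewrites the last term as $\frac12\int_{G(\mathbb R)}L_+^Y(t,y)\,\myd((\widetilde g_nf)\circ H)(y)$ and applies It\^o a second time to the primitive $I_n$ of $(\widetilde g_nf)\circ H$. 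Because $((\widetilde g_nf)\circ H)(y-)=\min\{1,n(f\circ H)(y-)\}\uparrow\ind_{\{(f\circ H)(\cdot-)>0\}}=\ind_{G(\mathbb R)\setminus G(F_-)}$ $\lambda$-a.e., dominated convergence against the pathwise finite measure $\myd V_s$ and Theorem \ref{theorem:convergence} for the martingale integrals yield in the limit $\Sigma_t:=Y_t-Y_0-\int_0^t\sigma(Y_s)\,\myd B_s=\int_0^t\ind_{G(F_-)}(Y_s)\,\myd V_s$. The right-hand side is manifestly of locally bounded variation, so \emph{only now} does uniqueness of the decomposition apply, identifying $M$ and $V$ and giving Eq.\ (\ref{eqn:SDE_transformed_eqn}). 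If you want to repair your argument, you should replace the component-wise cancellation by this (or some other) monotone approximation that produces the indicator $\ind_{G(F_-)}$ directly.
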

\begin{proof} Clearly, the necessity of the stated condition holds because of Definition \ref{def:sol_trans_eqn}. \\
\indent Now we assume that $(Y,\mathbb{F}) = (G(X), \mathbb{F})$ is a continuous semimartingale up to $S^Y_{G(\mathbb{R})}$. Obviously, conditions (i) and (ii) of Definition \ref{def:sol_trans_eqn} are satisfied. To establish condition (iv) we work, contrary to the proof of Theorem \ref{theorem:general_structure_space_transformation}, with an alternative approximation of $1/f$. We define right-continuous functions $\widetilde{g}_n$, $n \in \mathbb{N}$, of locally finite variation by
\[
	\widetilde{g}_n := \frac{1}{f} \wedge n, \qquad n \in \mathbb{N},
\]
and denote their primitives by
\[
	\widetilde{G}_n(x) := \int_0^x \widetilde{g}_n(y) \, \myd y, \qquad x \in \overline{\mathbb{R}}\,.
\]
Arguing in the same way as in the proof of Theorem \ref{theorem:general_structure_space_transformation} in (\ref{eqn:zwischenrechnung78}) and the following calculations, we obtain
\begin{equation}\label{eqn:decomposition_tilde_G_n}\begin{split}
	\widetilde{G}_n(X_t) 
		&= \widetilde{G}_n(X_0) + \int_0^t \widetilde{g}_n(X_s) \, b(X_s) \, \myd B_s \\
		&\phantom{==========}		+ \int_0^t \widetilde{g}_n(X_s-) \int_\mathbb{R} L_m^X(\myd s,y) \, \myd f(y)
				+ \frac{1}{2} \int_\mathbb{R} L_+^X(t,y) \, \myd \widetilde{g}_n(y) \\
		&= \widetilde{G}_n(X_0) + \int_0^t \widetilde{g}_n(X_s) \, b(X_s) \, \myd B_s + \int_{\mathbb{R}} L_m^X(t,y) \, \myd (\widetilde{g}_nf)(y),
\end{split}\end{equation}
$t < S^Y_{G(\mathbb{R})}, \ n\in \mathbb{N}, \ \mathbf{P}\text{-a.s.}$ 
The relations
\[
	\widetilde{g}_n \leq \widetilde{g}_{n+1} \leq \frac{1}{f}, \quad n \in \mathbb{N}, \qquad \text{and} \qquad
	\lim_{n \rightarrow +\infty} \widetilde{g}_n = \frac{1}{f}\,,
\]
as well as Lemma \ref{lemma:integrability} show that Theorem \ref{theorem:convergence} can be applied. Hence, for every $t \geq 0$ we conclude 
\[
	\lim_{n \rightarrow +\infty} \int_0^t \widetilde{g}_n(X_s) \, b(X_s) \, \myd B_s 
		=	\int_0^t \frac{b}{f}(X_s) \, \myd B_s  = \int_0^t \sigma(Y_s) \, \myd B_s
\]
in probability on $\{t < S^Y_{G(\mathbb{R})}\}$. Moreover, it holds true
\[
	\lim_{n \rightarrow +\infty} \widetilde{G}_n(x) = G(x), \qquad x \in \overline{\mathbb{R}}.
\]
Thus, for every $t \geq 0$ from (\ref{eqn:decomposition_tilde_G_n}) we obtain 
\begin{equation}\label{eqn:limit_1}
	\lim_{n \rightarrow +\infty} \int_\mathbb{R} L_m^X(t,y)\, \myd (\widetilde{g}_n f)(y) 
		= Y_t - Y_0 - \int_0^t \sigma(Y_s) \, \myd B_s =: \Sigma_t
\end{equation}
in probability on $\{t < S^Y_{G(\mathbb{R})}\}$. Additionally, our assumption that $(Y,\mathbb{F})$ is a continuous semimartingale up to $S^Y_{G(\mathbb{R})}$ allows us to apply Lemma \ref{lemma:relation_loc_times}. Therefore, for every $n \in \mathbb{N}$ we get
\[\begin{split}
	\int_\mathbb{R} L_m^X(t,y) \, \myd (\widetilde{g}_n f)(y) 
	 & = \frac{1}{2} \int_\mathbb{R} L_+^Y(t,G(y)) \, \myd (\widetilde{g}_n f)(y) \\
	 & = \frac{1}{2} \int_{G(\mathbb{R})} L_+^Y(t,y) \, \myd ((\widetilde{g}_n f)\circ H)(y), 
	 \qquad t < S^Y_{G(\mathbb{R})}, \ \mathbf{P}\text{-a.s.}
\end{split}\]
The functions $(\widetilde{g}_n f)\circ H$, $n \in \mathbb{N}$, appearing as integrators in the last expression are right-continuous and of locally bounded variation on $G(\mathbb{R})$. This implies that the continuous mappings $I_n : \overline{\mathbb{R}} \rightarrow \overline{\mathbb{R}}$, $n \in \mathbb{N}$, defined by
\[
	I_n(x) := \int_0^x ((\widetilde{g}_n f)\circ H)(y) \, \myd y, \qquad x \in \overline{\mathbb{R}}, \ n \in \mathbb{N},
\]
can be expressed as the difference of convex functions on $G(\mathbb{R})$. Therefore, applying the generalized It\^o formula (\ref{eqn:gen_ito_formula}) and using (\ref{eqn:semimartdecomp_of_Y}), we conclude
\begin{equation}\label{eqn:inter_step}\begin{split}
	& \int_{\mathbb{R}} L_m^X(t,y) \, \myd (\widetilde{g}_n f)(y) \\
		&\phantom{====}= \frac{1}{2} \int_{G(\mathbb{R})} L_+^Y(t,y) \, \myd ((\widetilde{g}_n f)\circ H)(y) \\
		&\phantom{====}= I_n(Y_t) - I_n(Y_0) - \int_0^t ((\widetilde{g}_n f) \circ H)(Y_s -) \, \myd M_s 
			- \int_0^t ((\widetilde{g}_n f) \circ H)(Y_s -) \, \myd V_s,
\end{split}\end{equation}
$t < S^Y_{G(\mathbb{R})}$, $\mathbf{P}$-a.s. Moreover, because of the relations
\[
	((\widetilde{g}_nf)\circ H)(y-) \, \ind_{G(\mathbb{R})}(y) 
			= \min\{1 , n \, (f \circ H)(y-) \} \, \ind_{G(\mathbb{R})}(y) 
			\leq \ind_{G(\mathbb{R})}(y),
\]
$y \in \mathbb{R}, \ n \in \mathbb{N},$ and
\[
	\lim_{n \rightarrow +\infty} ((\widetilde{g}_nf)\circ H)(y-) \, \ind_{G(\mathbb{R})}(y)
			= \ind_{\left\{x \in G(\mathbb{R}) :\, (f \circ H)(x-) > 0\right\}}(y), \qquad y \in \mathbb{R},
\]
as well as
\[
	\ind_{\{x \in G(\mathbb{R}) :\, (f \circ H)(x-) > 0\}} = \ind_{G(\mathbb{R})} \qquad \lambda\text{-a.e.},
\]
where we used the fact that $F_-$ and hence $G(F_-)$ have Lebesgue measure zero, we can apply Theorem \ref{theorem:convergence} again. For every $t \geq 0$ we conclude
\[
	\lim_{n \rightarrow +\infty} \int_0^t ((\widetilde{g}_n f) \circ H)(Y_s -) \, \myd M_s
			= \int_0^t  \ind_{G(\mathbb{R})} (Y_s) \, \myd M_s = M_t
\]
in probability on $\{t < S^Y_{G(\mathbb{R})}\}$. Since additionally it holds $\widetilde{g}_n f \leq \widetilde{g}_{n+1} f$, $n \in \mathbb{N}$, we have
\[
	\lim_{n \rightarrow +\infty} I_n(x) = \int_0^x \ind_{G(\mathbb{R})}(y)\, \myd y = x, \qquad x \in G(\mathbb{R}),
\]
and
\[\begin{split}
	\lim_{n \rightarrow +\infty} \int_0^t ((\widetilde{g}_n f) \circ H)(Y_s-) \, \myd V_s 
		& = \int_0^t \ind_{\{ x \in G(\mathbb{R}) : (f \circ H)(x-) > 0\}} (Y_s) \, \myd V_s \\
		& = \int_0^t \ind_{G(\mathbb{R}) \setminus G(F_-)} (Y_s) \, \myd V_s, 
		\qquad t < S^Y_{G(\mathbb{R})}, \ \mathbf{P}\text{-a.s.}
\end{split}\]
Summarizing these observations, from (\ref{eqn:inter_step}) we obtain
\[\begin{split}
	&\lim_{n \rightarrow +\infty} \int_{\mathbb{R}} L_m^X(t,y) \, \myd (\widetilde{g}_n f)(y) \\
		&\phantom{=,}= \lim_{n \rightarrow +\infty} \left( I_n(Y_t) - I_n(Y_0) - \int_0^t ((\widetilde{g}_n f) \circ H)(Y_s -) \, \myd M_s 
			- \int_0^t ((\widetilde{g}_n f) \circ H)(Y_s -) \, \myd V_s \right)\\
		&\phantom{=,}= Y_t - Y_0 - M_t - \int_0^t \ind_{G(\mathbb{R}) \setminus G(F_-)} (Y_s) \, \myd V_s \\
		&\phantom{=,}= \int_0^t \ind_{G(F_-)} (Y_s) \, \myd V_s
\end{split}\]
in probability on $\{t < S^Y_{G(\mathbb{R})}\}$. Comparing this result with (\ref{eqn:limit_1}) and using the continuity of the involved processes, we deduce
\[
	\Sigma_t = \int_0^t \ind_{G(F_-)} (Y_s) \, \myd V_s, \qquad t < S^Y_{G(\mathbb{R})}, \ \mathbf{P}\text{-a.s.}
\]
Therefore, $(\Sigma,\mathbb{F})$ is a continuous process of locally bounded variation on $[0,S^Y_{G(\mathbb{R})})$. From the uniqueness of the continuous semimartingale decomposition it follows
\[
	Y_t = Y_0 + \int_0^t \sigma(Y_s) \, \myd B_s + \int_0^t \ind_{G(F_-)} (Y_s) \, \myd V_s,
	\qquad t < S^Y_{G(\mathbb{R})}, \ \mathbf{P}\text{-a.s.},
\]
which we can rewrite (see (\ref{eqn:drift_int_with_respect_to_process})) as
\[
	Y_t = Y_0 + \int_0^t \sigma(Y_s) \, \myd B_s + \int_0^t \ind_{G(F_-)} (Y_s) \, \myd Y_s,
	\qquad t < S^Y_{G(\mathbb{R})}, \ \mathbf{P}\text{-a.s.},
\]
and the proof is finished. 
\end{proof}

We now pass to the inverse $H$ of the space transformation $G$ which can be easily handled as we demonstrate in the following
\begin{theorem}\label{theorem:spacetrans_H}
	Let $(Y,\mathbb{F})$ be a solution of Eq. (\ref{eqn:SDE_transformed_eqn}). Then $(X,\mathbb{F})$ given by $X = H(Y)$ is a solution to 
	Eq. (\ref{eqn:SDE_mvasd}).
\end{theorem}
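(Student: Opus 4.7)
The plan is to apply the generalized It\^o formula to $H(Y)$ and match the resulting decomposition with Eq.~(\ref{eqn:SDE_mvasd}). First I would verify that $H$ restricted to $G(\mathbb{R})$ is expressible as the difference of two convex functions: by (\ref{eqn:representation_H}), $H$ is the primitive of $f \circ H$, and since $f$ is right-continuous and of locally bounded variation and $H$ is continuous and strictly increasing, $f \circ H$ inherits these properties on $G(\mathbb{R})$, so $\myd(f \circ H)$ is a locally finite signed measure there. Setting $X := H(Y)$, conditions (i) and (ii) of Definition \ref{def:solution} are immediate because $Y_0 \in G(\mathbb{R})$ and $S_\infty^X = S^Y_{G(\mathbb{R})}$ by continuity and monotonicity of $H$. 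The generalized It\^o formula then yields
\[
X_t = X_0 + \int_0^t (f_- \circ H)(Y_s) \, \myd Y_s + \tfrac{1}{2} \int_{\mathbb{R}} L_+^Y(t,y) \, \myd (f \circ H)(y),
\]
which already shows that $X$ is a continuous semimartingale up to $S_\infty^X$ (condition (iii)).

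For the drift term I would perform a Lebesgue--Stieltjes change of variable $y = G(x)$ using the identity $(f \circ H) \circ G = f$ to obtain $\int_\mathbb{R} L_+^Y(t,G(x)) \, \myd f(x)$, and then define $L_m^X(t,x) := \tfrac{1}{2} L_+^Y(t, G(x))$ (with its left-continuous version given by $\tfrac{1}{2}L_-^Y(t,G(x))$). The drift then takes the desired form $\int_\mathbb{R} L_m^X(t,y) \, \myd f(y)$, and condition (iv) of Definition \ref{def:solution} follows by applying the occupation formula for $Y$ to the integrand $h(H(y)) (f \circ H)^2(y)$ and carrying out the same change of variables, using $\myd\assPro{X}_s = (f \circ H)^2(Y_s) \sigma^2(Y_s) \, \myd s$; adaptedness, continuity in $t$, monotonicity, and right-continuity in $x$ of $L_m^X$ are inherited from $L_\pm^Y$ via the continuous strictly increasing $G$. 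For the stochastic integral I would decompose $\myd Y_s = \sigma(Y_s) \, \myd B_s + \ind_{G(F_-)}(Y_s) \, \myd Y_s$; the contribution of the second summand vanishes since $f_- \circ H \equiv 0$ on $G(F_-)$, leaving $\int_0^t (f_- \circ H)(Y_s) \sigma(Y_s) \, \myd B_s$, which I will identify with $\int_0^t b(X_s) \, \myd B_s$.

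Verifying the pointwise identity $(f_- \circ H)(Y_s) \sigma(Y_s) = b(X_s)$ for $\myd s$-a.e.\ $s$ is the main obstacle, and I would carry it out by cases. On $\{X_s \notin F\}$ it is a direct algebraic identity from $\sigma = (b/f) \circ H$. On $\{X_s \in F_+\}$ the integrability of $\sigma^2(Y)$ combined with the occupation formula for $Y$ applied to the Lebesgue-null set $G(F_+)$ forces $Y$ to have no occupation time in $G(F_+ \cap \{b \neq 0\})$, so $b(X_s) = 0$ for a.e.\ such $s$, and both sides vanish under the standing conventions. On $\{X_s \in F_- \setminus F_+\}$ the analogous argument applied to $G(F_-)$ gives $\sigma(Y_s) = 0$ for a.e.\ such $s$, forcing $b(X_s) = 0$ since $f(X_s) > 0$ there. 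Combining the four ingredients yields Eq.~(\ref{eqn:SDE_mvasd}) with the Wiener process $B$ inherited from the solution $(Y,\mathbb{F})$ of Eq.~(\ref{eqn:SDE_transformed_eqn}), completing the verification.
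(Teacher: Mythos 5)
Your proposal is correct and follows essentially the same route as the paper: apply the generalized It\^o formula to $H(Y)$ using (\ref{eqn:representation_H}), kill the $\ind_{G(F_-)}$-part of the drift via $(f\circ H)(y-)=0$ on $G(F_-)$, transform the local-time integral by the change of variables $y=G(x)$, set $L_m^X(t,x):=\tfrac12 L_+^Y(t,G(x))$, and identify the stochastic integrand with $b(X_s)$ up to sets of zero occupation time. The only cosmetic difference is that you verify Definition \ref{def:solution}(iv) directly from the occupation times formula for $Y$ by a change of variables, whereas the paper invokes Lemma \ref{lemma:conversion_loc_time} to get $L_+^X(t,y)=2f(y)L_m^X(t,y)$ first; both are valid, and your more explicit case analysis on $F_+$ and $F_-\setminus F_+$ for the integrand identification is a sound elaboration of the paper's terser remark.
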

\begin{proof} Clearly, condition (i) and, since
\[
	S^Y_{G(\mathbb{R})} = \inf\{t \geq 0: Y_t \notin G(\mathbb{R})\} 
								  = \inf\{t \geq 0: X_t \notin \mathbb{R}\} = S_\infty^X\,,
\]
condition (ii) of Definition \ref{def:solution} are satisfied by $X$. From the representation (\ref{eqn:representation_H}) of $H$ we see that $H$ restricted to $G(\mathbb{R})$ is the difference of convex functions. Therefore, applying the generalized It\^o formula (\ref{eqn:gen_ito_formula}), we obtain that $(X,\mathbb{F})$ is a continuous semimartingale up to $S_\infty^X$ with decomposition
\[\begin{split}
	X_t &= H(Y_0) + \int_0^t (f \circ H)(Y_s -) \, \myd Y_s + \frac{1}{2} \int_{G(\mathbb{R})} L_+^Y(t,y) \, \myd (f \circ H)(y)\\
			&= H(Y_0) + \int_0^t (f \circ H)(Y_s -) \, \sigma(Y_s) \, \myd B_s \\
			&\phantom{=====} + \int_0^t (f \circ H)(Y_s -) \, \ind_{G(F_-)}(Y_s) \, \myd Y_s
								+ \frac{1}{2} \int_{\mathbb{R}} L_+^Y(t,G(y)) \, \myd f(y), \qquad t < S_\infty^X, \ \mathbf{P}\text{-f.s.}
\end{split}\]
Using $(f \circ H)(y -) \, \ind_{G(F_-)}(y) = 0$, $y \in \mathbb{R}$, we see that the third summand on the right-hand side vanishes. For treating the second summand on the right-hand side, we remark that $f$ has at most countably many discontinuities. Recalling the definition of $\sigma$, we obtain
\[\begin{split}
	\int_0^t (f \circ H)(Y_s-)\, \sigma(Y_s) \, \myd B_s &= \int_0^t (f \circ H)(Y_s)\, \sigma(Y_s) \, \myd B_s \\
																											 &= \int_0^t (f \circ H)(Y_s)\, (b \circ H)(Y_s) \, (f \circ H)^{-1}(Y_s) \, \myd B_s \\
																											 &= \int_0^t (b \circ H)(Y_s) \, \myd B_s, \qquad t < S_\infty^X, \ \mathbf{P}\text{-f.s.},
\end{split}\]
where in the last step we used additionally that $G(F_+)$ is of Lebesgue measure zero. Hence, setting
\[
	L_m^X(t,y) := \frac{1}{2}\, L_+^Y(t,G(y)), \qquad (t,y) \in [0,S_\infty^X) \times \mathbb{R},
\]
we can write
\[
	X_t = X_0 + \int_0^t b(X_s) \, \myd B_s + \int_\mathbb{R} L_m^X(t,y) \, \myd f(y), \qquad t < S_\infty^X,\ \mathbf{P}\text{-a.s.}
\]
Since $(X,\mathbb{F})$ satisfies the conditions of Lemma \ref{lemma:conversion_loc_time}, we obtain
\[
	L_+^X(t,y) = L_+^Y(t,G(y))\,f(y) = L_m^X(t,y) \, 2f(y), \qquad t < S_\infty^X, \ y \in \mathbb{R}, \ \mathbf{P}\text{-a.s.}
\]
Using the occupation times formula (\ref{eqn:occupationtime}), for every non-negative measurable function $h$ it follows
\[\begin{split}
	\int_0^t h(X_s) \, \myd \assPro{X}_s &= \int_\mathbb{R} h(y) \, L_+^X(t,y) \, \myd y \\
																			 &= \int_\mathbb{R} h(y) \, L_m^X(t,y) \, m(\myd y), \qquad t < S_\infty^X, \ \mathbf{P}\text{-a.s.}
\end{split}\]
Clearly, $L_m^X$ also fulfils the desired continuity properties of Definition \ref{def:solution}(iv).
\end{proof}
\begin{remark}\label{remark:occupation_time_in_zero}
Replacing $\sigma$ by the real-valued coefficient $\widetilde{\sigma}$ defined in Remark \ref{remark:diffcoeff_infty} does not change Eq. (\ref{eqn:SDE_transformed_eqn}). This follows from the fact that any solution $(Y,\mathbb{F})$ of Eq. (\ref{eqn:SDE_transformed_eqn}) (for $\sigma$ as well as for $\widetilde{\sigma}$) has no occupation time in $G(F_+ \cap \{b \neq 0\})$, which can be verified by the occupation times formula (cf. proof of (\ref{eqn:no_occ_time})). For this it is important that, as $\sigma$, the coefficient $\widetilde{\sigma}$ is different from zero on $G(F_+ \cap \{b \neq 0\})$. Otherwise, there would be also allowed solutions which are \emph{sticky} in $G(F_+ \cap \{b \neq 0\})$, i.e., solutions with a strictly positive occupation time in this set. However, in this paper we will not deal with sticky solutions. \mydia
\end{remark}

Motivated by our intention to keep within the class of semimartingales, we now introduce the notion of a \emph{good} solution of Eq. (\ref{eqn:SDE_mvasd}).
\begin{defi}\label{defi:good_solution}
	We say that a solution $(X,\mathbb{F})$ of Eq. (\ref{eqn:SDE_mvasd}) is \emph{good} if $(Y,\mathbb{F}) = (G(X),\mathbb{F})$ is a 
	continuous semimartingale up to $S^Y_{G(\mathbb{R})}$.
\end{defi}
\noindent Now the question arises: Under which conditions is $(X,\mathbb{F})$ a good solution? As a first result we stress the role of the process $\left( \bigl(\sum_{i = 0}^{|F|} (L_m^X(t,a_i) - L_m^X(t,b_i-) )\bigr)_{t \geq 0}, \mathbb{F} \right)$ which appears in Theorem \ref{theorem:general_structure_space_transformation} in the decomposition of $(Y,\mathbb{F})$. The following characterization of a good solution is just a reformulation of Theorem \ref{theorem:a_prioiri_semimart}. Indeed, we only need to follow the proof of Theorem \ref{theorem:a_prioiri_semimart}.
\begin{theorem}\label{theorem:a_prioiri_semimart_alt_formulation}
	A solution $(X,\mathbb{F})$ of Eq. (\ref{eqn:SDE_mvasd}) is good if and only if $\sum_{i = 0}^{|F|} (L_m^X(\, . \,,a_i) - L_m^X(\, . \,,b_i-) )$ is 
	of locally bounded variation on $[0, S^Y_{G(\mathbb{R})})$. In that case, for $(Y,\mathbb{F}) = (G(X),\mathbb{F})$ it holds
	\[
		\int_0^t \ind_{G(F_-)}(Y_s) \, \myd Y_s 
							= \sum_{i = 0}^{|F|} \left( L_m^X(\, . \,,a_i) - L_m^X(\, . \,,b_i-) \right), \qquad t < S_{G(\mathbb{R})}^Y, \ \mathbf{P}\text{-a.s.}
	\]
\end{theorem}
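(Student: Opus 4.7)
The plan is to recognize this result as a direct bookkeeping rephrasing of Theorem \ref{theorem:a_prioiri_semimart}, obtained by putting side by side the two decompositions of $Y = G(X)$ already available: the one produced by Theorem \ref{theorem:general_structure_space_transformation} (which is valid for any solution of Eq. (\ref{eqn:SDE_mvasd}) and features the sum $\sum_{i=0}^{|F|}(L_m^X(t,a_i) - L_m^X(t,b_i-))$ as its drift part) and the one of Eq. (\ref{eqn:SDE_transformed_eqn}) (which is valid precisely when $X$ is good, by Theorem \ref{theorem:a_prioiri_semimart}), then invoking uniqueness of the continuous semimartingale decomposition.

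For the sufficiency, starting from a solution $(X,\mathbb{F})$ of Eq. (\ref{eqn:SDE_mvasd}), Theorem \ref{theorem:general_structure_space_transformation} provides the representation
\[
Y_t = Y_0 + \int_0^t \sigma(Y_s)\,\myd B_s + S_t,\qquad t < S^Y_{G(\mathbb{R})},\ \mathbf{P}\text{-a.s.},
\]
with $S_t := \sum_{i=0}^{|F|}\bigl(L_m^X(t,a_i) - L_m^X(t,b_i-)\bigr)$. The stochastic integral is a continuous local martingale up to $S^Y_{G(\mathbb{R})}$ (using Remark \ref{remark:diffcoeff_infty} in case $\sigma$ is replaced by its real-valued modification $\widetilde{\sigma}$). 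Hence, if $S$ is of locally bounded variation on $[0,S^Y_{G(\mathbb{R})})$, the displayed identity is already a continuous semimartingale decomposition of $Y$, which makes $Y$ a continuous semimartingale up to $S^Y_{G(\mathbb{R})}$ and thus $(X,\mathbb{F})$ good in the sense of Definition \ref{defi:good_solution}. For the necessity and the identity, assume $(X,\mathbb{F})$ is good. Then Theorem \ref{theorem:a_prioiri_semimart} also yields
\[
Y_t = Y_0 + \int_0^t \sigma(Y_s)\,\myd B_s + \int_0^t \ind_{G(F_-)}(Y_s)\,\myd Y_s,\qquad t < S^Y_{G(\mathbb{R})},\ \mathbf{P}\text{-a.s.}
\]
The last term agrees, by (\ref{eqn:drift_int_with_respect_to_process}), with $\int_0^t \ind_{G(F_-)}(Y_s)\,\myd V_s$, where $V$ is the finite-variation component of $Y$; in particular it is continuous and of locally bounded variation up to $S^Y_{G(\mathbb{R})}$. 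Comparing the two decompositions of $Y$, both having the same continuous local martingale part $\int_0^\cdot \sigma(Y_s)\,\myd B_s$, the uniqueness of the continuous semimartingale decomposition forces
\[
S_t = \int_0^t \ind_{G(F_-)}(Y_s)\,\myd V_s = \int_0^t \ind_{G(F_-)}(Y_s)\,\myd Y_s,\qquad t < S^Y_{G(\mathbb{R})},\ \mathbf{P}\text{-a.s.},
\]
which gives both the bounded variation property of $S$ and the claimed formula.

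The one point that requires some care is the case $|F| = +\infty$, where $S$ was constructed in Theorem \ref{theorem:general_structure_space_transformation} only as a uniform-on-compacts limit in probability of finite partial sums, and it is not \emph{a priori} clear that $S$ is well-defined as a process nor that "locally of bounded variation on $[0,S^Y_{G(\mathbb{R})})$" is unambiguous for it. This is not a real obstacle however: the final line of the proof of Theorem \ref{theorem:general_structure_space_transformation} explicitly defines $S_t := G(X_t) - G(X_0) - \int_0^t (b/f)(X_s)\,\myd B_s$, so $S$ is automatically a continuous $\mathbb{F}$-adapted process on $[0,S^Y_{G(\mathbb{R})})$ for which "locally bounded variation" has its standard pathwise meaning. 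Once $S$ is viewed in this pathwise way, the uniqueness argument outlined above applies without further modification, and the proof is complete.
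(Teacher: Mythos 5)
Your argument is correct and is essentially the reading the authors intend: the paper offers no written proof beyond the remark that the theorem is a reformulation of Theorem \ref{theorem:a_prioiri_semimart}, and your combination of the decomposition from Theorem \ref{theorem:general_structure_space_transformation} (whose drift part is the sum in question) with the decomposition from Theorem \ref{theorem:a_prioiri_semimart}, identified via uniqueness of the continuous semimartingale decomposition together with (\ref{eqn:drift_int_with_respect_to_process}), is exactly that reformulation. Your explicit treatment of the case $|F|=+\infty$, where the sum is only defined as the continuous adapted process $G(X_t)-G(X_0)-\int_0^t (b/f)(X_s)\,\myd B_s$ so that ``locally of bounded variation'' has its usual pathwise meaning, is a welcome clarification of a point the paper leaves implicit.
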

Under additional assumptions on the set $F$ of singularities we can improve the preceding Theorem. For $F$ consisting only of isolated points we have
\begin{theorem}\label{theorem:isolated_points}
	Suppose that $|F \cap [-N,N]| < +\infty$, $N \in \mathbb{N}$. Then every solution $(X,\mathbb{F})$ of Eq. (\ref{eqn:SDE_mvasd}) is good. 
\end{theorem}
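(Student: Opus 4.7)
The plan is to invoke Theorem~\ref{theorem:a_prioiri_semimart_alt_formulation}: goodness of a solution $(X,\mathbb{F})$ is equivalent to the process
\[
  D_t := \sum_{i=0}^{|F|} \bigl( L_m^X(t, a_i) - L_m^X(t, b_i-) \bigr), \qquad t < S_\infty^X = S^Y_{G(\mathbb{R})},
\]
being of locally bounded variation on $[0, S^Y_{G(\mathbb{R})})$. I will establish this by a direct localization argument exploiting the hypothesis that $F$ meets each bounded interval only finitely often.

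First, I would introduce the localizing sequence $T_N := \inf\{t \geq 0 : |X_t| \geq N\} \wedge N$, $N \in \mathbb{N}$, which satisfies $T_N \uparrow S_\infty^X$ $\mathbf{P}$-a.s.\ thanks to the continuity of $X$ on $[0, S_\infty^X)$. On $[0, T_N]$ the process $X$ is confined to $[-N,N]$, so it is enough to show that $D$ is of bounded variation on every interval $[0, T_N]$.

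The decisive step uses Corollary~\ref{corr:comp_interval}: for $t \leq T_N$ and any $y \notin [-N,N]$, both $L_m^X(t, y)$ and $L_m^X(t, y-)$ vanish $\mathbf{P}$-a.s.\ (extended by the convention $L_m^X(\cdot, \pm\infty) := 0$). Since $a_i, b_i \in F \cup \{-\infty, +\infty\}$ and $|F \cap [-N,N]| < +\infty$ by assumption, only finitely many indices $i$ can contribute nontrivially to $D$ on $[0, T_N]$. Consequently, $D|_{[0,T_N]}$ is a \emph{finite} signed sum of processes of the form $L_m^X(\cdot, a_i)$ and $L_m^X(\cdot, b_i-)$. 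By Definition~\ref{def:solution}(iv), each $L_m^X(\cdot, y)$ is continuous and nondecreasing in $t$; the left-limit version $L_m^X(\cdot, y-)$ inherits these properties, being the pointwise limit of $L_m^X(\cdot, y_n)$ along any sequence $y_n \uparrow y$. Hence every summand is of bounded variation on $[0, T_N]$, and so is their finite sum. This yields the local bounded variation of $D$ and, by Theorem~\ref{theorem:a_prioiri_semimart_alt_formulation}, the goodness of $(X,\mathbb{F})$.

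I do not expect any serious obstacle here: given Theorem~\ref{theorem:a_prioiri_semimart_alt_formulation}, the argument reduces to the observation that the a priori infinite sum defining $D$ becomes locally finite under the isolated-points hypothesis, after which elementary monotonicity properties of $L_m^X$ suffice. The only minor technical point to record carefully is that the convergence issue in the definition of $D$ for $|F| = +\infty$ (which occurs in Theorem~\ref{theorem:general_structure_space_transformation}) trivializes on each $[0, T_N]$ because only finitely many terms are nonzero there.
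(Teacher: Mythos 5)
Your proposal is correct and follows essentially the same route as the paper: reduction via Theorem~\ref{theorem:a_prioiri_semimart_alt_formulation}, localization by the exit times from $[-N,N]$, and Corollary~\ref{corr:comp_interval} to see that only finitely many summands survive on each $[0,T_N]$, so the sum is locally of bounded variation. Your closing remark correctly identifies the one point the paper also spells out, namely that for $|F|=+\infty$ the in-probability limit defining the series in Theorem~\ref{theorem:general_structure_space_transformation} must be matched with the finite pointwise sum on $\{t\le T_N\}$.
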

\begin{proof} To prove the result, we use Theorem \ref{theorem:a_prioiri_semimart_alt_formulation}. If $F$ is finite, then the finite sum
$\sum_{i = 0}^{|F|} (L_m^X(\, . \,,a_i) - L_m^X(\, . \,,b_i-) )$ is obviously of locally bounded variation on  $[0, S^Y_{G(\mathbb{R})})$. Therefore, $(G(X),\mathbb{F})$ is a solution of Eq. (\ref{eqn:SDE_transformed_eqn}). In case of an infinite set $F$ satisfying $|F \cap [-N,N]| < +\infty$, $N \in \mathbb{N}$, for arbitrary $N \in \mathbb{N}$ we define the $\mathbb{F}$-stopping time $T_N := \inf \{t \geq 0: X_t \notin [-N,N]\}$.
Using Corollary \ref{corr:comp_interval}, for arbitrary $t \geq 0$ it is
\[
	\lim_{n \rightarrow +\infty} \sum_{i=0}^n \left( L_m^X(t,a_i) - L_m^X(t,b_i-) \right)
			= \sum_{a_i, b_i \in [-N,N]} \left( L_m^X(t,a_i) - L_m^X(t,b_i-) \right), \qquad  t \leq T_N, \ \mathbf{P}\text{-a.s.}
\]
and the sum on the right-hand side is finite. Additionally, since $\{t \leq T_N\} \subseteq \{t < S_\infty^X\}$, by Theorem \ref{theorem:general_structure_space_transformation} we have
\[
	\lim_{n \rightarrow +\infty} \sum_{i=0}^n \left(L_m^X(t,a_i) - L_m^X(t,b_i-)\right)
			= \sum_{i=0}^{+\infty} \left(L_m^X(t,a_i) - L_m^X(t,b_i-)\right)
\]
in probability on $\{t \leq T_N\}$. Therefore, using the continuity of the involved processes, we conclude
\[
	\sum_{i=0}^{+\infty} \left(L_m^X(t,a_i) - L_m^X(t,b_i-)\right) = \sum_{a_i, b_i \in [-N,N]} \left( L_m^X(t,a_i) - L_m^X(t,b_i-) \right), \qquad 
	t \leq T_N, \	\mathbf{P}\text{-a.s.}
\]
But since $N\in \mathbb{N}$ was chosen arbitrarily and since $T_N \uparrow S_{G(\mathbb{R})}^Y$, $N \rightarrow +\infty$, holds, this means that $\left(\sum_{i=0}^{+\infty} \left(L_m^X(t,a_i) - L_m^X(t,b_i-)\right)\right)_{t \geq 0}$ is of locally bounded variation on $[0,S_{G(\mathbb{R})}^Y)$, and hence $(G(X),\mathbb{F})$ is a solution of Eq. (\ref{eqn:SDE_transformed_eqn}).
\end{proof}
Under the assumption that the set $F$ is \emph{countable}, we can get more insight in the structure of the drift part of $(Y,\mathbb{F}) = (G(X),\mathbb{F})$ for good solutions $(X,\mathbb{F})$ of Eq. (\ref{eqn:SDE_mvasd}).
\begin{proposition}\label{proposition:drift_part_pointwise_sum}
	Suppose that $F$ is countable. Let $(X,\mathbb{F})$ be a good solution of Eq. (\ref{eqn:SDE_mvasd}). 	Then, for the solution 
	$(Y,\mathbb{F})$ of Eq. (\ref{eqn:SDE_transformed_eqn}) defined by $Y = G(X)$ it holds for all $t < S^Y_{G(\mathbb{R})}$ 
	$\mathbf{P}$-a.s.
	\[
		\int_0^t \ind_{G(F_-)} (Y_s) \, \myd Y_s = \sum_{a \in F_-} \left(L_m^X(t,a) - L_m^X(t,a-) \right) 
																						 = \frac{1}{2} \sum_{a \in G(F_-)} \left(L_+^Y(t,a) - L_-^Y(t,a) \right).
	\]
\end{proposition}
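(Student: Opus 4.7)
My plan is to establish both equalities by routing through the common right-hand side, which identifies the drift of $Y$ with a sum of space-jumps of its semimartingale local time.

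\emph{Middle equals right.} I would first observe that this falls straight out of Lemma \ref{lemma:relation_loc_times}: for every $a\in\mathbb{R}$ the lemma yields $L_m^X(t,a\pm) = \tfrac{1}{2}L_\pm^Y(t,G(a))$, and subtracting gives $L_m^X(t,a) - L_m^X(t,a-) = \tfrac{1}{2}\bigl(L_+^Y(t,G(a)) - L_-^Y(t,G(a))\bigr)$. Because $G$ is a strictly increasing homeomorphism of $\mathbb{R}$ onto $G(\mathbb{R})$, it restricts to a bijection $F_-\to G(F_-)$, so reindexing with $c=G(a)$ converts the middle sum into the right one.

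\emph{Left equals right.} For the harder identification, I would write the semimartingale decomposition $Y = Y_0 + M + V$ on $[0,S^Y_{G(\mathbb{R})})$. The occupation times formula gives $\int_0^t \ind_{\{c\}}(Y_s)\,d\langle M\rangle_s = 0$ for every $c\in\mathbb{R}$, so $\int_0^{\cdot} \ind_{G(F_-)}(Y_s)\,dM_s$ has vanishing quadratic variation and is identically zero, reducing the left-hand side to $\int_0^t \ind_{G(F_-)}(Y_s)\,dV_s$. Since $F$ is countable, I can write $\ind_{G(F_-)} = \sum_{c\in G(F_-)} \ind_{\{c\}}$ pointwise; the paths of $V$ are of locally bounded variation, so on $[0,t]$ the signed measure $dV$ has finite total variation, and countable additivity applied to the disjoint measurable sets $\{s\in[0,t]:Y_s=c\}$ will justify
\[
	\int_0^t \ind_{G(F_-)}(Y_s)\,\myd V_s = \sum_{c\in G(F_-)} \int_0^t \ind_{\{c\}}(Y_s)\,\myd V_s.
\]
Applying the standard semimartingale identity (\ref{eqn:loc_time_and_variation_process}) to $Y$ then delivers $\int_0^t \ind_{\{c\}}(Y_s)\,\myd V_s = \tfrac{1}{2}\bigl(L_+^Y(t,c) - L_-^Y(t,c)\bigr)$ for each $c$, and summing reproduces the third expression.

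\emph{Where I expect the main difficulty.} The only non-routine step is the interchange of sum and integral in the displayed equation: this is precisely the point where the countability hypothesis on $F$ is used in an essential way. Everything else is a bookkeeping consequence of Lemma \ref{lemma:relation_loc_times} (to move between $L_m^X$ and $L_\pm^Y$) together with the identity relating the space-jumps of the semimartingale local time to the drift of $Y$.
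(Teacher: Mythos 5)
Your proposal is correct and follows essentially the same route as the paper: the paper likewise reduces $\int_0^t \ind_{G(F_-)}(Y_s)\,\myd Y_s$ to $\int_0^t \ind_{G(F_-)}(Y_s)\,\myd V_s$ (this is exactly (\ref{eqn:drift_int_with_respect_to_process})), splits that integral into a countable sum over singletons, applies (\ref{eqn:loc_time_and_variation_process}), and then invokes Lemma \ref{lemma:relation_loc_times} to pass between $L_\pm^Y$ and $L_m^X$. Your identification of the sum--integral interchange as the place where countability of $F$ enters matches the paper's use of that hypothesis.
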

\begin{proof}
\indent By $V$ we denote the process of locally bounded variation in the semimartingale decomposition of $Y$. Using (\ref{eqn:drift_int_with_respect_to_process}) and property (\ref{eqn:loc_time_and_variation_process}), we obtain
\begin{equation}\label{eqn:drift_part_pointwise_sum}
	\begin{split}
		\int_0^t \ind_{G(F_-)}(Y_s) \, \myd Y_s &= \int_0^t \ind_{G(F_-)}(Y_s) \, \myd V_s \\
																						&= \sum_{a \in G(F_-)} \int_0^t \ind_{\{a\}}(Y_s)\,\myd V_s \\
																						&= \frac{1}{2} \sum_{a \in G(F_-)} \left( L_+^Y(t,a) - L_-^Y(t,a)\right), 
																							 \qquad t < S_{G(\mathbb{R})}^Y, \ \mathbf{P}\text{-a.s.},
	\end{split}
\end{equation}
which together with Lemma \ref{lemma:relation_loc_times} ends the proof. 
\end{proof}
\begin{remark}
	The assumption of Proposition \ref{proposition:drift_part_pointwise_sum} that $F$ or, equivalently, 
  $F_-$ is countable is essential in (\ref{eqn:drift_part_pointwise_sum}). Indeed, if 
	the set of Lebesgue measure zero $F_-$ is uncountable, the same holds for $G(F_-)$. Hence, besides a singular discrete part, $\myd V_s$ can 
	have a singular continuous part. \mydia
\end{remark}
We now come to a characterization of good solutions $(X,\mathbb{F})$ of Eq. (\ref{eqn:SDE_mvasd}) for countable sets $F$ for which its accumulation points
\[
	F^A := \{ x \in F: \forall \, \varepsilon > 0 \; \exists \, y \in F \cap (x-\varepsilon, x+\varepsilon), \ y \neq x \}
\]
are isolated.
\begin{theorem}\label{theorem:finitely_many_accumulation_points}
	Suppose that $|F^A \cap [-N,N]| < +\infty$, $N \in \mathbb{N}$. Then $(X,\mathbb{F})$ is a good solution of Eq. (\ref{eqn:SDE_mvasd})
	if and only if the following conditions are fulfilled: If $F_-$ is infinite then \medskip
	
	(i) $\displaystyle\sum_{a \in F_-} \left| L_m^X(t,a) - L_m^X(t,a-) \right| < +\infty, \qquad t < S_\infty^X, \ \mathbf{P}\text{-a.s.}$ \medskip
	
	(ii) For any enumeration $\{a_1, a_2, \ldots\}$ of $F_-$, the sequence
	$\left(\sum_{i=1}^n (L_m^X(\,.\,, a_i) - L_m^X(\,.\,, a_i-))\right)_{n\in \mathbb{N}}$ of processes converges $\mathbf{P}$-a.s. locally in 
	variation on $[0, S_\infty^X)$.
\end{theorem}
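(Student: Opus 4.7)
The plan is to use Theorem \ref{theorem:a_prioiri_semimart_alt_formulation} as the bridge: $(X,\mathbb{F})$ is good if and only if the process
\[
	W_t := \sum_{i=0}^{|F|} \bigl( L_m^X(t, a_i) - L_m^X(t, b_i-) \bigr), \qquad t < S^Y_{G(\mathbb{R})},
\]
supplied by Theorem \ref{theorem:general_structure_space_transformation} is of locally bounded variation on $[0, S^Y_{G(\mathbb{R})})$. The central preparatory step would be to establish the rearrangement identity
\[
	W_t = \sum_{a \in F_-} \bigl( L_m^X(t, a) - L_m^X(t, a-) \bigr), \qquad t < S^Y_{G(\mathbb{R})}, \ \mathbf{P}\text{-a.s.}
\]
I would localize via $T_N := \inf\{t \geq 0 : X_t \notin [-N, N]\}$. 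Inside $[-N, N]$ the set $F$ consists of finitely many accumulation points $c_1, \ldots, c_k \in F^A$ plus an at most countable discrete set of isolated points. Each isolated $x \in F \cap [-N, N] \setminus F^A$ appears simultaneously as $b_i$ (right endpoint of the interval left of $x$) and as $a_j$ (left endpoint of the interval right of $x$), so it contributes exactly $L_m^X(\cdot, x) - L_m^X(\cdot, x-)$ to $W$. Near each $c_j$, sequences of isolated points of $F$ converge to $c_j$ from one or both sides, and the right-continuity together with the existence of left limits of $y \mapsto L_m^X(t, y)$ lets the corresponding sub-sum telescope to $L_m^X(\cdot, c_j) - L_m^X(\cdot, c_j-)$. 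Since by Corollary \ref{corollary:left_continuity} one has $L_m^X(\cdot, x) = L_m^X(\cdot, x-)$ whenever $x \notin F_-$, only points in $F_-$ contribute non-trivially.

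For the forward direction, assume $(X, \mathbb{F})$ is good and let $Y = Y_0 + M + V$ be the semimartingale decomposition of $Y = G(X)$. By Proposition \ref{proposition:drift_part_pointwise_sum}, $V_t = \sum_{a \in F_-}(L_m^X(t,a) - L_m^X(t, a-))$. Combining (\ref{eqn:loc_time_and_variation_process}) applied to $Y$ with Lemma \ref{lemma:relation_loc_times}, I would derive
\[
	L_m^X(t, a) - L_m^X(t, a-) = \int_0^t \ind_{\{Y_s = G(a)\}} \, \myd V_s, \qquad a \in \mathbb{R}.
\]
Then condition (i) follows from the Jordan variation measure:
\[
	\sum_{a \in F_-} \bigl| L_m^X(t, a) - L_m^X(t, a-) \bigr| \leq \int_0^t \ind_{G(F_-)}(Y_s) \, \myd |V|_s \leq |V|_t < +\infty.
\]
For (ii), the partial sums along any enumeration $\{a_1, a_2, \ldots\}$ of $F_-$ equal $\int_0^\cdot \ind_{\{G(a_1), \ldots, G(a_n)\}}(Y_s) \, \myd V_s$, and their variation distance on $[0, T]$ from $V$ is dominated by $\int_0^T \ind_{G(F_-) \setminus \{G(a_1), \ldots, G(a_n)\}}(Y_s) \, \myd |V|_s$, which tends to $0$ as $n \to \infty$ by dominated convergence against the finite measure $\myd |V|_s$ on $[0,T]$.

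For the backward direction, assume (i) and (ii). Denote by $W'$ the limit process from (ii); since convergence locally in variation preserves local bounded variation, $W'$ is $\mathbf{P}$-a.s. of locally bounded variation on $[0, S_\infty^X)$. By (i), the sum $\sum_{a \in F_-}(L_m^X(t, a) - L_m^X(t, a-))$ converges absolutely at every $t$ and is independent of the enumeration, hence agrees pointwise with $W'_t$. The rearrangement identity of the first step then gives $W = W'$, so Theorem \ref{theorem:a_prioiri_semimart_alt_formulation} yields that $(X, \mathbb{F})$ is good.

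The main obstacle is the rearrangement identity. Organizing the sum over the intervals $(a_i, b_i)$ around each $c_j \in F^A$ requires careful bookkeeping of left- and right-accumulated subsequences, and the limiting passage is justified via the càdlàg properties of $y \mapsto L_m^X(t, y)$. The hypothesis $|F^A \cap [-N, N]| < +\infty$ is essential here: it reduces the analysis on every bounded interval to finitely many clusters, each of which telescopes cleanly, something that need not hold when $F^A$ has its own accumulation points.
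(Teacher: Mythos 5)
The paper does not actually print a proof of this theorem: it is declared ``rather technical'' and deferred to S.~Blei's thesis (Satz 2.3.62), so a line-by-line comparison is impossible. Judged on its own terms, your overall architecture is the natural one and matches the machinery the paper has set up: reduce goodness to the locally bounded variation of $W_t = \sum_{i=0}^{|F|}(L_m^X(t,a_i)-L_m^X(t,b_i-))$ via Theorem \ref{theorem:a_prioiri_semimart_alt_formulation}, and relate $W$ to the point-indexed jump sum over $F_-$. Your forward direction is complete and correct --- and note that it does not need your ``rearrangement identity'' at all: goodness gives $W_t=\int_0^t\ind_{G(F_-)}(Y_s)\,\myd V_s$ directly from Theorem \ref{theorem:a_prioiri_semimart_alt_formulation}, Proposition \ref{proposition:drift_part_pointwise_sum} identifies this with $\sum_{a\in F_-}(L_m^X(t,a)-L_m^X(t,a-))$ (applicable since the hypothesis forces $F$ to be countable), and your domination by $\int_0^t\ind_{G(F_-)}(Y_s)\,\myd|V|_s\le|V|_t$ and the dominated-convergence argument for (ii) are both sound.

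The genuine gap is in the backward direction, where everything rests on the rearrangement identity, and there it cannot be taken as an unconditional ``preparatory step'': without (i) the spatially ordered partial sums near an accumulation point need not converge at all. Concretely, if $y_1<y_2<\cdots\uparrow c$ are the isolated points accumulating at $c$ from the left, the sum of the interval contributions telescopes to $L_m^X(t,y_1)-L_m^X(t,y_{n+1}-)+\sum_{k=2}^{n}(L_m^X(t,y_k)-L_m^X(t,y_k-))$; the middle term tends to $L_m^X(t,c-)$ by the c\`adl\`ag property, but the trailing jump series converges only once (i) is assumed. So the identity must be proved \emph{under} (i)--(ii), and your phrase that the sub-sum near $c_j$ ``telescopes to $L_m^X(\cdot,c_j)-L_m^X(\cdot,c_j-)$'' is imprecise: it telescopes to that jump \emph{plus} the jumps at all the isolated points of the cluster and two boundary terms. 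A second point you gloss over: the process $W$ is defined in Theorem \ref{theorem:general_structure_space_transformation} only as a limit in probability of partial sums over an \emph{arbitrary} enumeration of the intervals, so to identify it with the pointwise limit of your spatially ordered partial sums you must invoke the fact that the in-probability limit is enumeration-independent (it equals $G(X_t)-G(X_0)-\int_0^t(b/f)(X_s)\,\myd B_s$), and then match the two limits $\mathbf{P}$-a.s. Both issues are repairable along the lines you indicate, and they are exactly the bookkeeping the authors chose to suppress; but as written the backward direction is a programme rather than a proof.
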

\begin{proof}
The proof of this theorem is rather technical and is therefore omitted. The interested reader is referred to S. Blei \cite{blei_thesis_2010}, Satz 2.3.62.
\end{proof}
\indent To finish this section, we remark that by our observations, particularly Theorem \ref{theorem:a_prioiri_semimart_alt_formulation}, it is reasonable to conjecture that a complete analysis of Eq. (\ref{eqn:SDE_mvasd}) goes beyond the class of semimartingales and, consequently, should be envisaged in the richer class of local Dirichlet processes.

\section{Symmetric Solutions \--- Existence and Uniqueness}\label{sec:symmetric_solutions}
\noindent In the next two sections we proceed with the systematic investigation of existence and uniqueness of good solutions of Eq. (\ref{eqn:SDE_mvasd}). We need the following preparatory lemma, which compares the sets $N_b$ and $N_\sigma$ as well as $E_{b/\sqrt{f}}$ and $E_\sigma$, as introduced before Lemma \ref{lemma:stopping}.
\begin{lemma}\label{lemma:connection_sets}
	We have $N_\sigma^c = G(N_b^c)$ and $E_\sigma^c = G(E_{b/\sqrt{f}}^c)$.
\end{lemma}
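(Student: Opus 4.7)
The plan is to transfer both identities through the strictly increasing homeomorphism $H \colon G(\mathbb{R}) \to \mathbb{R}$ inverse to the (locally) absolutely continuous function $G$, which satisfies $G'(z) = 1/f(z)$ Lebesgue-a.e. in $\mathbb{R}$.

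The first identity $N_\sigma^c = G(N_b^c)$ is essentially a direct definitional check. For $x \in G(\mathbb{R})$ and $z := H(x) \in \mathbb{R}$, the paper's conventions $0\cdot\infty = 0$ and $a\cdot\infty = +\infty$ for $a\neq 0$ give $\sigma(x) = b(z)/f(z) \neq 0$ if and only if $b(z) \neq 0$, i.e., if and only if $x \in G(N_b^c)$. For $x \in \mathbb{R}\setminus G(\mathbb{R})$ the extensions $H(x) = \pm\infty$, $b(\pm\infty) = 0$ and $1/f(\pm\infty) = 0$ force $\sigma(x) = 0$, so such $x$ belong to $N_\sigma$. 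Since $G(N_b^c) \subseteq G(\mathbb{R})$, the identity follows.

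For the second identity $E_\sigma^c = G(E_{b/\sqrt{f}}^c)$ the main step is the change of variables. For any open $U \subseteq G(\mathbb{R})$ the standard substitution formula for the absolutely continuous function $G$ gives
\[
	\int_U \sigma^{-2}(y)\, \myd y \;=\; \int_{H(U)} \sigma^{-2}(G(z))\,G'(z)\, \myd z.
\]
On $\mathbb{R}\setminus F$ the integrand on the right equals $(f^2/b^2)(z)\cdot(1/f(z)) = f(z)/b^2(z) = (b/\sqrt{f})^{-2}(z)$, and since $F$ has Lebesgue measure zero it can be removed from the integral, giving $\int_U \sigma^{-2}(y)\,\myd y = \int_{H(U)} (b/\sqrt{f})^{-2}(z)\,\myd z$. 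Because $H$ is a homeomorphism from $G(\mathbb{R})$ onto $\mathbb{R}$, the map $U \mapsto H(U)$ is a bijection between open neighborhoods of $x$ in $G(\mathbb{R})$ and open neighborhoods of $H(x)$ in $\mathbb{R}$, so this yields $x \in E_\sigma \Leftrightarrow H(x) \in E_{b/\sqrt{f}}$ for every $x \in G(\mathbb{R})$, i.e., $E_\sigma \cap G(\mathbb{R}) = G(E_{b/\sqrt{f}})$. For $x \in \mathbb{R}\setminus G(\mathbb{R})$, every open neighborhood of $x$ contains a nondegenerate interval on which $\sigma \equiv 0$, so $\sigma^{-2} \equiv +\infty$ on a set of positive Lebesgue measure and $x \in E_\sigma$. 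Combining the two cases gives $E_\sigma = G(E_{b/\sqrt{f}}) \cup (\mathbb{R}\setminus G(\mathbb{R}))$, and taking complements inside $\mathbb{R}$ produces $E_\sigma^c = G(E_{b/\sqrt{f}}^c)$.

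The only delicate point is the change of variables across $F$: the pointwise identity $\sigma^{-2}(G(z))\,G'(z) = (b/\sqrt{f})^{-2}(z)$ involves indeterminate expressions of type $0\cdot\infty$ on $F_+$ (where $f = 0$, so $\sigma$ may be $0$ or $+\infty$ depending on whether $b$ vanishes), but since $F$ has Lebesgue measure zero these ambiguous points contribute nothing and the formula is established by simply restricting to the complement $\mathbb{R}\setminus F$, on which everything is finite and standard.
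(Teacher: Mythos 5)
Your proposal is correct and follows essentially the same route as the paper: the first identity is a direct check of the definition of $\sigma=(b/f)\circ H$ together with the stated conventions, and the second rests on the change of variables $\int_U\sigma^{-2}(y)\,\myd y=\int_{H(U)}b^{-2}(z)f(z)\,\myd z$ (the paper performs it via $\myd H(y)=(f\circ H)(y)\,\myd y$ from (\ref{eqn:representation_H}), you via $G'=1/f$ a.e., which is the same computation) combined with the homeomorphism correspondence of neighborhoods. Your extra care about the points of $F$ and of $\mathbb{R}\setminus G(\mathbb{R})$ only makes explicit what the paper leaves implicit.
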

\begin{proof} The first equality is obvious. To show the second assertion, we observe	
\[
	E_\sigma^c \subseteq (G(-\infty), G(+\infty))
\]
and, using (\ref{eqn:representation_H}),
\[\begin{split}
		\int_U \sigma^{-2}(y) \, \myd y\ & = \int_U (b \circ H)^{-2}(y) \,(f\circ H)(y) \, \myd H(y) \\
																		 &= \int_{H(U)} b^{-2}(y)\,f(y)\, \myd y 
\end{split}\]
for every open subset $U \subseteq (G(-\infty), G(+\infty))$. Since $G$ and $H$ are continuous on $\mathbb{R}$ and \linebreak[4] $(G(-\infty),G(+\infty))$, respectively, the proof is completed. 
\end{proof}

We recall that Corollary \ref{corollary:left_continuity} reveals that the local time $L_m^X$ of a solution $(X,\mathbb{F})$ of Eq. (\ref{eqn:SDE_mvasd}) is continuous in the state variable except in the points of $F_-$. In this section, as a first step, we are interested in solutions of Eq. (\ref{eqn:SDE_mvasd}) which possess a continuous local time $L^X_m$. 
\begin{defi}
	A solution $(X,\mathbb{F})$ of Eq. (\ref{eqn:SDE_mvasd}) is called \emph{symmetric} if its local time $L^X_m$ is continuous in the state variable.
\end{defi}
\noindent This notion is motivated by the obvious fact that the local time $L_m^X$ of a solution $(X,\mathbb{F})$ of Eq. (\ref{eqn:SDE_mvasd}) is continuous in the state variable if and only if the local time $L_m^X$ is symmetric, i.e., $L_m^X$ coincides with the symmetric local time $\hat{L}_m^X(t,x) := (L_m^X(t,x) + L_m^X(t,x-))/2$, $t < S_\infty^X$, $x \in \mathbb{R}$.
\begin{proposition}\label{prop:symmetric_good_solution_and_eqn_without_drift}
	(i) Let $F$ be countable. If $(X,\mathbb{F})$ is a symmetric good solution of Eq. (\ref{eqn:SDE_mvasd}), then $(Y,\mathbb{F}) = (G(X),\mathbb{F})$ 
	is a solution of Eq. (\ref{eqn:SDE_without_drift}) with $\sigma =(b/f) \circ H$. \smallskip
	
	(ii) Conversely, for arbitrary $F$ it holds: If $(Y,\mathbb{F})$ is a solution of Eq. (\ref{eqn:SDE_without_drift}) with 
	$\sigma =(b/f) \circ H$, then $(X,\mathbb{F}) = (H(Y),\mathbb{F})$ is a symmetric good solution of Eq. (\ref{eqn:SDE_mvasd}). \smallskip
	
	(iii) Suppose $|F^A \cap [-N,N]| < +\infty$, $N \in \mathbb{N}$. Then any symmetric solution $(X,\mathbb{F})$ of Eq. (\ref{eqn:SDE_mvasd}) is also a 
	good solution, and hence $(Y,\mathbb{F}) = (G(X),\mathbb{F})$ is a solution of Eq. (\ref{eqn:SDE_without_drift}) with $\sigma =(b/f) 
	\circ H$.
\end{proposition}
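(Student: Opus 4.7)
The plan is to exploit the space-transformation machinery already established. Theorems \ref{theorem:a_prioiri_semimart} and \ref{theorem:spacetrans_H} connect solutions of Eq.\,(\ref{eqn:SDE_mvasd}) with those of Eq.\,(\ref{eqn:SDE_transformed_eqn}), while Lemma \ref{lemma:relation_loc_times} translates a symmetry statement about $L_m^X$ into an equality of $L_+^Y$ and $L_-^Y$ on $G(F_-)$. The guiding observation is that symmetry is precisely what makes the residual drift in Eq.\,(\ref{eqn:SDE_transformed_eqn}) vanish, and this is what links Eq.\,(\ref{eqn:SDE_transformed_eqn}) to the driftless Eq.\,(\ref{eqn:SDE_without_drift}).

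For part (i), I start from the fact that a symmetric good solution $X$ transforms, by Theorem \ref{theorem:a_prioiri_semimart}, to a process $Y=G(X)$ satisfying Eq.\,(\ref{eqn:SDE_transformed_eqn}). Since $F$ is countable, Proposition \ref{proposition:drift_part_pointwise_sum} identifies the drift term as
\[
 \int_0^t \ind_{G(F_-)}(Y_s)\,\myd Y_s = \sum_{a\in F_-} \bigl(L_m^X(t,a)-L_m^X(t,a-)\bigr).
\]
Each summand vanishes by the symmetry of $L_m^X$, so the drift disappears and $Y$ solves Eq.\,(\ref{eqn:SDE_without_drift}) with $\sigma=(b/f)\circ H$.

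For part (ii), I first check that any solution $Y$ of Eq.\,(\ref{eqn:SDE_without_drift}) is automatically a solution of Eq.\,(\ref{eqn:SDE_transformed_eqn}). Since $Y$ has no bounded variation part,
\[
 \int_0^t \ind_{G(F_-)}(Y_s)\,\myd Y_s = \int_0^t \ind_{G(F_-)}(Y_s)\,\sigma(Y_s)\,\myd B_s
\]
is a continuous local martingale whose quadratic variation, by the occupation times formula (\ref{eqn:occupationtime}), equals $\int_{G(F_-)} L_+^Y(t,y)\,\myd y$. This integral vanishes because $G$ is absolutely continuous and therefore sends the Lebesgue-null set $F_-$ to a Lebesgue-null set, so the stochastic integral is identically zero. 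Theorem \ref{theorem:spacetrans_H} then gives that $X=H(Y)$ solves Eq.\,(\ref{eqn:SDE_mvasd}), and $X$ is good since $G(X)=Y$ is a semimartingale by hypothesis. To verify symmetry, I use Lemma \ref{lemma:relation_loc_times} to reduce $L_m^X(t,a)=L_m^X(t,a-)$ for $a\in F_-$ to $L_+^Y(t,G(a))=L_-^Y(t,G(a))$; this equality holds at every point by property (\ref{eqn:loc_time_and_variation_process}), because the bounded variation part of $Y$ is identically zero. Outside of $F_-$ symmetry is automatic by Corollary \ref{corollary:left_continuity}.

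For part (iii), the assumption $|F^A\cap[-N,N]|<+\infty$ for each $N$ forces $F$ itself to be countable: $F^A$ is countable by hypothesis, and the points of $F\setminus F^A$ are isolated in $F$ and hence form a countable set as well. Now Theorem \ref{theorem:finitely_many_accumulation_points} characterizes goodness via the summability and local variation conditions on the jumps $L_m^X(\cdot,a)-L_m^X(\cdot,a-)$, $a\in F_-$; both are trivially satisfied when $X$ is symmetric because every such jump is zero. Hence $X$ is good, and part (i) applies to yield the conclusion. The main obstacle I anticipate is the step in part (ii) showing that $\int_0^\cdot\ind_{G(F_-)}(Y_s)\,\sigma(Y_s)\,\myd B_s$ actually vanishes; this hinges on the null-set transport property $|G(F_-)|=0$, which follows from the absolute continuity of $G$. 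Everything else reduces to bookkeeping with the lemmas and theorems already proved.
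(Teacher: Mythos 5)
Your proof is correct and follows essentially the same route as the paper: the paper packages your vanishing-drift arguments into Lemma \ref{lemma:eqn_without_drift_and_transformed_symmetric_sol} and otherwise combines Proposition \ref{proposition:drift_part_pointwise_sum}, Lemma \ref{lemma:relation_loc_times}, Theorem \ref{theorem:spacetrans_H} and Theorem \ref{theorem:finitely_many_accumulation_points} exactly as you do (your observation in (iii) that the hypothesis forces $F$ to be countable is a detail the paper leaves implicit). The one point you elide is that a solution of Eq.~(\ref{eqn:SDE_without_drift}) must satisfy the equation for all $t<S^Y_\infty$ and not merely for $t<S^Y_{G(\mathbb{R})}$, so in (i), when $G(\pm\infty)$ is finite, one still needs the short extension argument past the boundary of $G(\mathbb{R})$ (cf.\ \cite{engelbert_schmidt:1989_III}, Proposition (4.29)), and conversely in (ii) one should verify Definition \ref{def:sol_trans_eqn}(ii) via Lemma \ref{lemma:stopping} and $E_\sigma^c\subseteq G(\mathbb{R})$; these are bookkeeping steps the paper delegates to Lemma \ref{lemma:eqn_without_drift_and_transformed_symmetric_sol}.
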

Before we prove Proposition \ref{prop:symmetric_good_solution_and_eqn_without_drift}, we give the auxiliary
\begin{lemma}\label{lemma:eqn_without_drift_and_transformed_symmetric_sol}
	The process $(Y,\mathbb{F})$ is a solution of Eq. (\ref{eqn:SDE_without_drift}) with $\sigma =(b/f) \circ H$ if and only if $(Y,\mathbb{F})$ is a 
	solution of Eq. (\ref{eqn:SDE_transformed_eqn}) satisfying 
	\begin{equation}\label{eqn:aux_zero_drift}
		\int_0^t \ind_{G(F_-)} (Y_s) \, \myd Y_s = 0, \qquad t < S^Y_{G(\mathbb{R})}, \ \mathbf{P}\text{-a.s.}
	\end{equation}
\end{lemma}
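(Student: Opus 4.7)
The lemma essentially records that the drift term in Eq.~(\ref{eqn:SDE_transformed_eqn}) is automatically absent for driftless solutions. I plan to prove the two implications separately, the reverse direction being immediate and the forward direction carrying the content.

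For the direction ``$\Leftarrow$'', assume $(Y,\mathbb{F})$ satisfies Eq.~(\ref{eqn:SDE_transformed_eqn}) together with (\ref{eqn:aux_zero_drift}). Substituting the vanishing integral in (\ref{eqn:aux_zero_drift}) directly into Eq.~(\ref{eqn:SDE_transformed_eqn}) yields Eq.~(\ref{eqn:SDE_without_drift}) driven by the same Wiener process $B$. The remaining structural requirements (continuity, taking values in $(\overline{\mathbb{R}},\mathscr{B}(\overline{\mathbb{R}}))$, being stopped at $S^Y_{G(\mathbb{R})}$, and the existence of the stochastic integral) transfer verbatim from Definition~\ref{def:sol_trans_eqn}.

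For the direction ``$\Rightarrow$'', assume $(Y,\mathbb{F})$ is a solution of Eq.~(\ref{eqn:SDE_without_drift}). Then $Y-Y_0=\int_0^\cdot \sigma(Y_s)\,\myd B_s$ is a continuous local martingale up to $S^Y_{G(\mathbb{R})}$ with $\assPro{Y}_t=\int_0^t \sigma^2(Y_s)\,\myd s$. Hence the process defining the left-hand side of (\ref{eqn:aux_zero_drift}) is itself a continuous local martingale whose quadratic variation equals $\int_0^t \ind_{G(F_-)}(Y_s)\,\sigma^2(Y_s)\,\myd s$. By the occupation times formula (\ref{eqn:occupationtime}) this rewrites as $\int_{G(F_-)} L_+^Y(t,y)\,\myd y$, which vanishes once one knows that $G(F_-)$ is a Lebesgue null set. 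To obtain this last fact I would invoke the Luzin N-property: $F_-$ has Lebesgue measure zero, and $G$, being the primitive of the locally integrable function $1/f$, is locally absolutely continuous and therefore maps Lebesgue null sets to Lebesgue null sets. Consequently (\ref{eqn:aux_zero_drift}) holds, and the validity of Eq.~(\ref{eqn:SDE_transformed_eqn}) follows by adding the zero term to Eq.~(\ref{eqn:SDE_without_drift}).

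No serious obstacle is anticipated; the only slightly delicate ingredient is the identity $\lambda(G(F_-))=0$, which could alternatively be verified directly by covering $F_-$ by a union of small intervals and invoking the absolute continuity of the Lebesgue integral of $1/f$.
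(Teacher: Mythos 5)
Your argument for the identity (\ref{eqn:aux_zero_drift}) is sound and is essentially the reason the paper gives as well: for a solution of Eq.\ (\ref{eqn:SDE_without_drift}) the process $\int_0^{\cdot}\ind_{G(F_-)}(Y_s)\,\myd Y_s$ is a continuous local martingale with quadratic variation $\int_{G(F_-)}L_+^Y(t,y)\,\myd y=0$, because $G$, being locally absolutely continuous, maps the null set $F_-$ to a null set. However, both of your implications have a gap at the boundary of $G(\mathbb{R})$, and this is precisely where the lemma has content whenever $G(+\infty)<+\infty$ or $G(-\infty)>-\infty$, i.e.\ whenever $1/f$ is integrable at $+\infty$ or $-\infty$.

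In the direction ``$\Rightarrow$'' you never verify condition (ii) of Definition \ref{def:sol_trans_eqn}, namely $Y_t=Y_{t\wedge S^Y_{G(\mathbb{R})}}$. A solution of Eq.\ (\ref{eqn:SDE_without_drift}) is a priori only stopped at its explosion time, and nothing in your argument prevents it from crossing a finite boundary point $G(+\infty)$ and continuing to move afterwards; this property does not ``transfer verbatim''. The paper closes this by noting $E_\sigma^c\subseteq G(\mathbb{R})$ (see Lemma \ref{lemma:connection_sets} and its proof), so that $G(\mathbb{R})^c\subseteq E_\sigma$ and Lemma \ref{lemma:stopping} forces $Y$ to be absorbed upon leaving $G(\mathbb{R})$. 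In the direction ``$\Leftarrow$'' --- which you declare immediate --- substituting (\ref{eqn:aux_zero_drift}) into Eq.\ (\ref{eqn:SDE_transformed_eqn}) yields $Y_t=Y_0+\int_0^t\sigma(Y_s)\,\myd B_s$ only for $t<S^Y_{G(\mathbb{R})}$, whereas a solution of Eq.\ (\ref{eqn:SDE_without_drift}) must satisfy the equation for all $t<S^Y_\infty$, and on the event that $Y$ reaches a finite boundary point $G(\pm\infty)$ one has $S^Y_{G(\mathbb{R})}<S^Y_\infty=+\infty$. One must therefore still check that the path, frozen at $G(\pm\infty)$ after $S^Y_{G(\mathbb{R})}$, continues to satisfy the driftless equation; this works because $\sigma(G(\pm\infty))=b(\pm\infty)/f(\pm\infty)=0$ by the paper's conventions, and the paper delegates exactly this step to the argument of \cite{engelbert_schmidt:1989_III}, Proposition (4.29). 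Both gaps are vacuous when $G(\mathbb{R})=\mathbb{R}$, but the lemma is stated, and used, without that restriction.
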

\begin{proof}
Let $(Y,\mathbb{F})$ be a solution of Eq. (\ref{eqn:SDE_without_drift}) with $\sigma =(b/f) \circ H$. Since $E_\sigma^c \subseteq G(\mathbb{R})$, from Lemma \ref{lemma:stopping} it follows $Y_t =Y_{t \wedge S^Y_{G(\mathbb{R})}}$, $t \geq 0$, $\mathbf{P}$-a.s. Moreover, $G(F_-)$ is of Lebesgue measure zero, and hence (\ref{eqn:aux_zero_drift}) follows immediately. Therefore, $(Y,\mathbb{F})$ is also a solution of Eq. (\ref{eqn:SDE_transformed_eqn}).\\
\indent Conversely, if $(Y,\mathbb{F})$ is a solution of Eq. (\ref{eqn:SDE_transformed_eqn}) satisfying (\ref{eqn:aux_zero_drift}), then we have
\[
	Y_t = Y_0 + \int_0^t \sigma(Y_s) \, \myd B_s, \qquad t < S^Y_{G(\mathbb{R})}, \ \mathbf{P}\text{-a.s.},
\]
and by similar arguments as used in the proof of \cite{engelbert_schmidt:1989_III}, Proposition (4.29), it can be shown that for arbitrary $t \geq 0$ the last equality also holds on $\{S^Y_{G(\mathbb{R})} \leq t < S^Y_\infty\}$. Hence, $(Y,\mathbb{F})$ is a solution of Eq. (\ref{eqn:SDE_without_drift}).
\end{proof}
\begin{proof}[Proof of Proposition \ref{prop:symmetric_good_solution_and_eqn_without_drift}] Assertion (i) follows directly from Proposition \ref{proposition:drift_part_pointwise_sum} and Lemma \ref{lemma:eqn_without_drift_and_transformed_symmetric_sol}. \\
\indent If $(Y,\mathbb{F}) = (G(X),\mathbb{F})$ is a solution of Eq. (\ref{eqn:SDE_without_drift}) with $\sigma =(b/f) \circ H$, then Lemma \ref{lemma:eqn_without_drift_and_transformed_symmetric_sol} implies that $(Y,\mathbb{F})$ is also a solution of Eq. (\ref{eqn:SDE_transformed_eqn}). Hence, from Theorem \ref{theorem:spacetrans_H} we obtain that $(X,\mathbb{F}) = (H(Y),\mathbb{F})$ is a good solution of Eq. (\ref{eqn:SDE_mvasd}). Moreover, combining (\ref{eqn:loc_time_and_variation_process}) for $Y$ and Lemma \ref{lemma:relation_loc_times}, we see that $(X,\mathbb{F})$ is also symmetric and (ii) is proven. \\
\indent Statement (iii) can now be deduced with the help of Theorem \ref{theorem:finitely_many_accumulation_points}. Indeed, if $(X,\mathbb{F})$ is a symmetric solution of Eq. (\ref{eqn:SDE_mvasd}) then the sums appearing in the conditions (i) and (ii) of Theorem \ref{theorem:finitely_many_accumulation_points} are $\mathbf{P}$-a.s. equal to zero. Therefore, $(X,\mathbb{F})$ is good and we can apply (i).
\end{proof}
Concerning the existence of symmetric good solutions of Eq. (\ref{eqn:SDE_mvasd}), we can state the following
\begin{theorem}\label{theorem:ex_symmetric_solutions}
(i) Let $F$ be arbitrary. Suppose $E_{b/\sqrt{f}} \subseteq N_b$ is satisfied. Then for every initial distribution there exists a symmetric good solution of Eq. (\ref{eqn:SDE_mvasd}). \smallskip
	
(ii) Let $F$ be countable. Then for every initial distribution there exists a symmetric good solution of Eq. (\ref{eqn:SDE_mvasd}) if and only if the condition $E_{b/\sqrt{f}} \subseteq N_b$ is satisfied.
\end{theorem}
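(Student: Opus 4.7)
The plan is to reduce everything to the driftless SDE (\ref{eqn:SDE_without_drift}) with coefficient $\sigma = (b/f) \circ H$ through the space transformation $G$ of Section \ref{sec:space_transform}, and then invoke the Engelbert-Schmidt existence criterion for driftless SDEs (the analogue for (\ref{eqn:SDE_without_drift}) of Theorem \ref{theorem:e_u_gen_drift}(i), established in \cite{engelbert_schmidt:1989_III}, Theorem (4.17)): Eq. (\ref{eqn:SDE_without_drift}) admits a solution for every initial distribution if and only if $E_\sigma \subseteq N_\sigma$. The bridge between this and the stated hypothesis is Lemma \ref{lemma:connection_sets}, which gives $N_\sigma^c = G(N_b^c)$ and $E_\sigma^c = G(E_{b/\sqrt{f}}^c)$; since $G \colon \mathbb{R}\to G(\mathbb{R})$ is a strictly increasing bijection, passing to complements yields the equivalence $E_\sigma \subseteq N_\sigma \Longleftrightarrow E_{b/\sqrt{f}} \subseteq N_b$.

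For part (i), I would take an arbitrary initial distribution $\mu$ on $\mathbb{R}$, push it forward via $G$ to the distribution $G_*\mu$ supported in $G(\mathbb{R})$, and apply the Engelbert-Schmidt existence result to produce a solution $(Y,\mathbb{F})$ of (\ref{eqn:SDE_without_drift}) with initial distribution $G_*\mu$. Then Proposition \ref{prop:symmetric_good_solution_and_eqn_without_drift}(ii), which holds for arbitrary $F$, ensures that $(H(Y),\mathbb{F})$ is a symmetric good solution of (\ref{eqn:SDE_mvasd}) with initial distribution $\mu$, as required.

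For part (ii) the if-direction is just (i). For necessity, assume that a symmetric good solution of (\ref{eqn:SDE_mvasd}) exists for every initial distribution on $\mathbb{R}$. Because $F$ is countable, Proposition \ref{prop:symmetric_good_solution_and_eqn_without_drift}(i) transports each such solution $(X,\mathbb{F})$ into a solution $(G(X),\mathbb{F})$ of (\ref{eqn:SDE_without_drift}) with initial distribution $G_*\mu$. As $\mu$ ranges over all probability measures on $\mathbb{R}$, $G_*\mu$ ranges over all probability measures supported in $G(\mathbb{R})$, so (\ref{eqn:SDE_without_drift}) admits a solution from every Dirac measure $\delta_{x_0}$ with $x_0 \in G(\mathbb{R})$. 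The pointwise form of the Engelbert-Schmidt nonexistence argument then rules out $x_0 \in E_\sigma$ whenever $x_0 \in N_\sigma^c$; a final application of Lemma \ref{lemma:connection_sets} rewrites this as $E_{b/\sqrt{f}} \subseteq N_b$.

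The main obstacle I foresee is the last step of (ii): the Engelbert-Schmidt necessity is normally formulated for initial distributions supported on all of $\overline{\mathbb{R}}$, whereas here the feasible starting distributions for $Y$ are confined to the possibly proper subset $G(\mathbb{R})$. This loss is only apparent, however, because by Lemma \ref{lemma:connection_sets} both $N_\sigma^c$ and $E_\sigma^c$ are contained in $G(\mathbb{R})$, so the condition $E_\sigma \subseteq N_\sigma$ is entirely a statement about points of $G(\mathbb{R})$ and starting distributions supported in $G(\mathbb{R})$ already probe every location where it could fail. Once this local character of the Engelbert-Schmidt criterion is noted, the proof collapses to a direct composition of the transformation results from Section \ref{sec:space_transform} with the classical driftless criterion.
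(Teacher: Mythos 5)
Your proposal is correct and follows essentially the same route as the paper: existence via Lemma \ref{lemma:connection_sets}, Theorem \ref{theorem:e_u_gen_drift}(i) applied to the driftless equation with coefficient $\sigma=(b/f)\circ H$, and Proposition \ref{prop:symmetric_good_solution_and_eqn_without_drift}(ii); necessity via Proposition \ref{prop:symmetric_good_solution_and_eqn_without_drift}(i) and the pointwise argument that a solution started at a point of $E_\sigma$ must be constant (Lemma \ref{lemma:stopping}), forcing that point into $N_\sigma$. Your remark that $E_\sigma^c$ and $N_\sigma^c$ lie in $G(\mathbb{R})$, so that starting points confined to $G(\mathbb{R})$ suffice to probe the criterion, is exactly the observation implicit in the paper's use of Lemma \ref{lemma:connection_sets}.
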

\begin{proof} To prove (i) and the sufficiency of the condition $E_{b/\sqrt{f}} \subseteq N_b$ in (ii), we choose an arbitrary initial distribution $\mu$. Lemma \ref{lemma:connection_sets} implies that $E_\sigma \subseteq N_\sigma$ holds, too. Hence, by Theorem \ref{theorem:e_u_gen_drift}(i), Eq. (\ref{eqn:SDE_without_drift}) with diffusion coefficient $\sigma = (b/f) \circ H$ possesses a solution $(Y,\mathbb{F})$ with initial distribution $\mu \circ G^{-1}$. Now from Proposition \ref{prop:symmetric_good_solution_and_eqn_without_drift} it follows immediately that $(X,\mathbb{F}) = (H(Y),\mathbb{F})$ is a symmetric good solution of Eq. (\ref{eqn:SDE_mvasd}) with initial distribution $\mu$. \\
\indent To prove the necessity of $E_{b/\sqrt{f}}\subseteq N_b$ in (ii), we fix $x_0 \in E_{b/\sqrt{f}}$ and take a symmetric good solution $(X,\mathbb{F})$ of Eq. (\ref{eqn:SDE_mvasd}) started at $X_0 = x_0$. From Proposition \ref{prop:symmetric_good_solution_and_eqn_without_drift}(i) we obtain that $(Y,\mathbb{F}) = (G(X),\mathbb{F})$ is a solution of Eq. (\ref{eqn:SDE_without_drift}) with diffusion coefficient $\sigma =(b/f) \circ H$ and initial value $Y_0 = y_0 = G(x_0)$. Moreover, via Lemma \ref{lemma:connection_sets} it follows $y_0 \in E_\sigma$. Hence, Lemma \ref{lemma:stopping} implies $Y_t = y_0$, $t \geq 0$, $\mathbf{P}$-a.s. Therefore, we conclude
\begin{equation}\label{eqn:belonging_to_zeros_of_sigma}
	0 = \int_0^t \sigma^2(Y_s) \,\myd s = \sigma^2(y_0) \, t, \qquad t \geq 0,
\end{equation}
from which we see $y_0 \in N_\sigma$. By the arbitrariness of $x_0 \in E_{b/\sqrt{f}}$ we conclude $G(E_{b/\sqrt{f}}) \subseteq N_\sigma$ and via Lemma \ref{lemma:connection_sets} we obtain $E_{b/\sqrt{f}} \subseteq N_b$.
\end{proof}

Now we treat the question of uniqueness of symmetric good solutions of Eq. (\ref{eqn:SDE_mvasd}).
\begin{theorem}\label{theorem:ex_u_un_symmetric_sol}
(i) Let $F$ be arbitrary. Suppose $E_{b/\sqrt{f}} \subseteq N_b$. If for every initial distribution Eq. (\ref{eqn:SDE_mvasd}) possesses a unique symmetric good solution, then it holds $E_{b/\sqrt{f}} = N_b$. \smallskip

(ii) Let $F$ be countable. Then for every initial distribution Eq. (\ref{eqn:SDE_mvasd}) possesses a unique symmetric good solution if and only if the condition $E_{b/\sqrt{f}} = N_b$ is satisfied.
\end{theorem}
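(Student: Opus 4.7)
The plan is to reduce the problem to the existence and uniqueness of solutions of the driftless equation (\ref{eqn:SDE_without_drift}) with coefficient $\sigma = (b/f)\circ H$, exploiting the space transformation $G$ and its inverse $H$. The correspondence between symmetric good solutions of Eq. (\ref{eqn:SDE_mvasd}) and solutions of (\ref{eqn:SDE_without_drift}) is supplied by Proposition \ref{prop:symmetric_good_solution_and_eqn_without_drift}, while Lemma \ref{lemma:connection_sets} translates the criterion $E_{b/\sqrt{f}} = N_b$ into $E_\sigma = N_\sigma$. The classical Engelbert--Schmidt theorem for driftless equations (the analogue of Theorem \ref{theorem:e_u_gen_drift}(ii), referenced via \cite{engelbert_schmidt:1989_III}, Theorem (4.17) and (4.22)) then asserts that Eq. (\ref{eqn:SDE_without_drift}) admits a unique solution for every initial distribution iff $E_\sigma = N_\sigma$.

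For part (ii), with $F$ countable, I would dispose of sufficiency as follows: given any initial distribution $\mu$ on $\mathbb{R}$, push it forward via $G$, invoke the Engelbert--Schmidt theorem to obtain the unique solution $(Y,\mathbb{F})$ of (\ref{eqn:SDE_without_drift}) with initial law $\mu\circ G^{-1}$, and transport back to a symmetric good solution $X = H(Y)$ of Eq. (\ref{eqn:SDE_mvasd}) via Proposition \ref{prop:symmetric_good_solution_and_eqn_without_drift}(ii); uniqueness in law then follows from Proposition \ref{prop:symmetric_good_solution_and_eqn_without_drift}(i) (which needs $F$ countable) by forwarding any two candidate symmetric good solutions through $G$ to obtain equi-law driftless solutions, then back via the continuous $H$. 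For necessity, the assumed uniqueness entails in particular existence of symmetric good solutions for every initial distribution; Theorem \ref{theorem:ex_symmetric_solutions}(ii) then forces $E_{b/\sqrt{f}} \subseteq N_b$, and an appeal to part (i) provides the missing inclusion $N_b \subseteq E_{b/\sqrt{f}}$.

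The heart of the argument is therefore part (i), which I would prove by contradiction. Assuming $E_{b/\sqrt{f}} \subseteq N_b$ together with the claimed uniqueness, and supposing some $x_0 \in N_b \setminus E_{b/\sqrt{f}}$ exists, set $y_0 := G(x_0)$; by Lemma \ref{lemma:connection_sets}, $y_0 \in N_\sigma \setminus E_\sigma$. I would then exhibit two distinct symmetric good solutions of Eq. (\ref{eqn:SDE_mvasd}) with initial distribution $\delta_{x_0}$, contradicting uniqueness. The first is the constant process $X_t \equiv x_0$, which trivially fulfils all requirements: $L_m^X \equiv 0$ is continuous in $x$ (so $X$ is symmetric), $b(x_0)=0$ makes the diffusion integral vanish and the drift term vanishes because $L_m^X\equiv 0$, while $G(X)\equiv y_0$ is a constant semimartingale (so $X$ is good). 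The second is $H(Y)$, where $(Y,\mathbb{F})$ is a non-constant solution of (\ref{eqn:SDE_without_drift}) started at $y_0$; such a solution exists because $y_0 \notin E_\sigma$ renders $\sigma^{-2}$ locally integrable around $y_0$, so the standard Engelbert--Schmidt time-change-of-Wiener-process construction yields a non-constant $Y$. Proposition \ref{prop:symmetric_good_solution_and_eqn_without_drift}(ii) then delivers the symmetric good solution $H(Y)$, which is non-constant since $H$ is strictly monotone on $G(\mathbb{R})$.

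The main obstacle is having to handle part (i) for arbitrary $F$ without the luxury of Proposition \ref{prop:symmetric_good_solution_and_eqn_without_drift}(i), which is restricted to countable $F$. Pairing the constant solution against $H(Y)$ circumvents this, since only the reverse direction (ii) of that proposition is needed. All remaining steps are formal manipulations of the established machinery and the set identities of Lemma \ref{lemma:connection_sets}.
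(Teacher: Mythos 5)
Your proposal is correct and follows essentially the same route as the paper: necessity is obtained exactly as in the paper by playing the constant solution at a point of $N_b\setminus E_{b/\sqrt{f}}$ against $H(Y)$ for the non-trivial (fundamental) Engelbert--Schmidt solution $Y$ started at $G(x_0)\in N_\sigma\setminus E_\sigma$, using Proposition \ref{prop:symmetric_good_solution_and_eqn_without_drift}(ii) (valid for arbitrary $F$), while sufficiency in (ii) transports two symmetric good solutions through $G$ via Proposition \ref{prop:symmetric_good_solution_and_eqn_without_drift}(i) and invokes uniqueness for the driftless equation. The only (immaterial) difference is that the paper first localizes the transformed solutions with stopping times at intervals $[a_n,b_n]\uparrow G(\mathbb{R})$ before applying Theorem \ref{theorem:e_u_gen_drift}(ii) to $\sigma_n=\sigma\ind_{(a_n,b_n)}$, whereas you apply the uniqueness theorem to $\sigma$ directly.
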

\begin{proof}
First, we show (i) and the necessity of $E_{b/\sqrt{f}} = N_b$ in (ii). To this end, let us assume that, for every initial distribution, Eq. (\ref{eqn:SDE_mvasd}) possesses a unique symmetric good solution. In case of a countable set $F$ by means of Theorem \ref{theorem:ex_symmetric_solutions}(ii) this implies $E_{b/\sqrt{f}} \subseteq N_b$. For uncountable $F$ we suppose that $E_{b/\sqrt{f}} \subseteq N_b$ holds. Now we accomplish this part of the proof by contraposition. Let us assume that $E_{b/\sqrt{f}} = N_b$ does not hold, i.e., there exists an $x_0 \in E^c_{b/\sqrt{f}} \cap N_b$. Then, clearly, $\overline{X} \equiv x_0$ is a symmetric good solution of Eq. (\ref{eqn:SDE_mvasd}). On the other hand, we can also find a non-trivial symmetric good solution of Eq. (\ref{eqn:SDE_mvasd}) started at $x_0$. Indeed, because of Lemma \ref{lemma:connection_sets}, $E_{b/\sqrt{f}} \subseteq N_b$ implies $E_\sigma \subseteq N_\sigma$. Hence, there exists the so-called fundamental solution $(Y,\mathbb{F})$ of Eq. (\ref{eqn:SDE_without_drift}) with diffusion coefficient $\sigma = (b/f) \circ H$ started at $y_0=G(x_0) \in E^c_\sigma \cap N_\sigma$ and $Y$ is non-trivial (see \cite{engelbert_schmidt:1989_III}, Definition (4.16) and Theorem (4.17)). Finally, via Proposition \ref{prop:symmetric_good_solution_and_eqn_without_drift}(ii) we conclude that $(X,\mathbb{F}) = (H(Y),\mathbb{F})$ is a non-trivial symmetric good solution of Eq. (\ref{eqn:SDE_mvasd}) with initial point $x_0$. \\
\indent To show the sufficiency of $E_{b/\sqrt{f}} = N_b$ in (ii), we take two symmetric good solutions $(X^i,\mathbb{F}^i)$, $i =1,2$, of Eq. (\ref{eqn:SDE_mvasd}) defined on $(\Omega^i, \mathcal{F}^i, \mathbf{P}^i)$, $i =1,2$, which possess the same initial distribution. Then from Proposition \ref{prop:symmetric_good_solution_and_eqn_without_drift}(i) it follows that $(Y^i,\mathbb{F}^i) := (G(X^i), \mathbb{F}^i)$, $i = 1,2$, are two solutions of Eq. (\ref{eqn:SDE_without_drift}) with $\sigma = (b/f) \circ H$ and identical initial distributions. Now we take a sequence of intervals $[a_n,b_n]$, $n \in \mathbb{N}$, such that 
\begin{equation}\label{eqn:sequence_of_intervals}
	[a_n,b_n] \subseteq [a_{n+1},b_{n+1}] \subseteq G(\mathbb{R}),\ n \in \mathbb{N}, 
	\quad \text{ and } \quad 
	\bigcup_{n \in \mathbb{N}} [a_n,b_n] = G(\mathbb{R})
\end{equation}
and define the $\mathbb{F}$-stopping times $S^i_n := \inf\{t \geq 0: Y^i_t \notin (a_n, b_n)\}$, $n \in \mathbb{N}$, $i =1,2$. For arbitrary $n\in \mathbb{N}$ we set $Y^{i,n}_t := Y^i_{t \wedge S^i_n}$, $t \geq 0$. Then it holds
\begin{equation}\label{eqn:end_of_proof_uniqueness}
		Y^{i,n}_t = Y^i_0 + \int_0^t \sigma_n(Y^{i,n}_s) \, \myd B^i_s, \qquad t \geq 0, \ \mathbf{P}^i\text{-a.s.}, \ i=1,2,
\end{equation}
where $\sigma_n := \sigma \ind_{(a_n,b_n)}$ and $B^i$, $i = 1,2$, is the corresponding Wiener process. That means, $(Y^{i,n},\mathbb{F})$, $i =1,2$, are solutions to the same equation of type (\ref{eqn:SDE_without_drift}) with identical initial distribution. Moreover, via Lemma \ref{lemma:connection_sets} we obtain $E_\sigma = N_\sigma$, and hence we have $E_{\sigma_n} = N_{\sigma_n}$. Applying Theorem \ref{theorem:e_u_gen_drift}(ii), we conclude that $Y^{1,n}$ and $Y^{2,n}$ have the same distribution on $C_{\overline{\mathbb{R}}}([0,+\infty))$. This implies that the distributions of $Y^1$ and $Y^2$ coincide on $\mathcal{C}_{S_n-}$, where $\mathbb{C} = (\mathcal{C}_t)_{t \geq 0}$ is the filtration generated by the coordinate mappings $Z = (Z_t)_{t \geq 0}$,
\[
	Z_t(\omega) = \omega(t), \qquad \omega \in C_{\overline{\mathbb{R}}}([0,+\infty)), \ t \geq 0,
\]
and $S_n := \inf\{t \geq 0: Z_t \notin (a_n, b_n)\}$. Finally, it follows easily that the distributions of $Y^1$ and $Y^2$ coincide on $C_{\overline{\mathbb{R}}}([0,+\infty))$. Hence, we conclude that the distributions of $X^1$ and $X^2$ coincide, too. 
\end{proof}
If the set $F$ satisfies the condition that $F^A$ consists only of isolated points, then from Theorem \ref{theorem:ex_symmetric_solutions}(ii) and \ref{theorem:ex_u_un_symmetric_sol}(ii) and Proposition \ref{prop:symmetric_good_solution_and_eqn_without_drift}(ii) we get immediately
\begin{corollary}\label{cor:ex_u_un_isolated_points}
	 Suppose $|F^A \cap [-N,N]| < +\infty$, $N \in \mathbb{N}$. Then the following statements hold. \smallskip
	 
	(i) For every initial distribution there exists a symmetric solution of Eq. (\ref{eqn:SDE_mvasd}) if and only if the condition 
			$E_{b/\sqrt{f}} \subseteq N_b$ is satisfied. \smallskip
			 
	(ii) For every initial distribution there exists a unique symmetric solution of Eq. (\ref{eqn:SDE_mvasd}) if and only if the condition 
			 $E_{b/\sqrt{f}} = N_b$ is satisfied.
\end{corollary}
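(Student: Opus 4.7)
The plan is to reduce the corollary to Theorem \ref{theorem:ex_symmetric_solutions}(ii) and Theorem \ref{theorem:ex_u_un_symmetric_sol}(ii) by means of Proposition \ref{prop:symmetric_good_solution_and_eqn_without_drift}(iii). The key observation is that the hypothesis $|F^A \cap [-N,N]| < +\infty$ for every $N \in \mathbb{N}$ forces $F$ to be countable: the accumulation points $F^A$ form a locally finite, hence at most countable, subset of $\mathbb{R}$, and the remaining points $F\setminus F^A$ are isolated in $F$, so they also form an at most countable set. Consequently the countability assumption needed to invoke part (ii) of Theorems \ref{theorem:ex_symmetric_solutions} and \ref{theorem:ex_u_un_symmetric_sol} is automatically fulfilled.

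First I would note that, by Proposition \ref{prop:symmetric_good_solution_and_eqn_without_drift}(iii), under the present assumption on $F^A$ every symmetric solution of Eq.~(\ref{eqn:SDE_mvasd}) is automatically a symmetric good solution. Hence the classes of symmetric solutions and symmetric good solutions coincide, both with regard to existence and with regard to uniqueness in law. In particular, for every initial distribution there exists a symmetric solution of Eq.~(\ref{eqn:SDE_mvasd}) if and only if there exists a symmetric good solution, and uniqueness holds in the one class if and only if it holds in the other.

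Given this identification, assertion (i) of the corollary follows at once from Theorem \ref{theorem:ex_symmetric_solutions}(ii): since $F$ is countable, the existence of a symmetric good solution for every initial distribution is equivalent to $E_{b/\sqrt{f}} \subseteq N_b$. Assertion (ii) follows in the same way from Theorem \ref{theorem:ex_u_un_symmetric_sol}(ii): the existence and uniqueness of a symmetric good solution for every initial distribution is equivalent to $E_{b/\sqrt{f}} = N_b$.

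There is no genuine obstacle here; the content of the corollary is already packaged in the earlier results. The only slightly subtle point is the verification that the hypothesis $|F^A \cap [-N,N]| < +\infty$ implies countability of $F$, which is what allows us to strip the qualifier ``good'' from the conclusions of Theorems \ref{theorem:ex_symmetric_solutions} and \ref{theorem:ex_u_un_symmetric_sol} via Proposition \ref{prop:symmetric_good_solution_and_eqn_without_drift}(iii).
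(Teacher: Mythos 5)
Your proof is correct and follows essentially the same route as the paper, which obtains the corollary immediately from Theorem \ref{theorem:ex_symmetric_solutions}(ii) and Theorem \ref{theorem:ex_u_un_symmetric_sol}(ii) together with the identification of symmetric solutions with symmetric good solutions under the hypothesis on $F^A$. Your explicit check that $|F^A \cap [-N,N]| < +\infty$ forces $F$ to be countable, and your appeal to Proposition \ref{prop:symmetric_good_solution_and_eqn_without_drift}(iii) (the paper cites part (ii), but (iii) is indeed the part that lets one drop the qualifier ``good''), merely make explicit what the paper leaves implicit.
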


\section{Skew Solutions \--- Existence and Uniqueness}\label{sec:skew_solutions}
\noindent In this section, we want to study good solutions $(X,\mathbb{F})$ of Eq. (\ref{eqn:SDE_mvasd}) which, in general, do not possess a continuous local time $L_m^X$. We start with an important example of an equation of type (\ref{eqn:SDE_mvasd}) and illustrate a new feature of equations of type (\ref{eqn:SDE_mvasd}) compared with equations of type (\ref{eqn:SDE_mvd}). The example shows that, even in the case $E_{b/\sqrt{f}} = N_b$, or stronger $E_b = N_b$, there are, in general, solutions of Eq. (\ref{eqn:SDE_mvasd}) which are different in law. Note that the uniqueness result of Theorem \ref{theorem:ex_u_un_symmetric_sol} was achieved by considering only symmetric solutions of Eq. (\ref{eqn:SDE_mvasd}). The example will provide us with a whole variety of (good) solutions and will give us an idea for an approach to Eq. (\ref{eqn:SDE_mvasd}) more general than in Section \ref{sec:symmetric_solutions}. \\
\indent Fixing $\delta \in (1,2)$ and $x_0 \in \mathbb{R}$, a typical example of an equation of type (\ref{eqn:SDE_mvasd}) with generalized and singular drift is the Bessel equation which is satisfied for $x_0 \geq 0$ by the $\delta$-dimensional Bessel process. Classically, this is an equation with ordinary drift (see D. Revuz and M. Yor \cite{revuzyor}, Ch. XI, \S 1):
\begin{equation}\label{eqn:SDE_BES}
	X_t = x_0 + B_t + \int_0^t \frac{\delta - 1}{2\, X_s} \, \myd s\,,
\end{equation}
where the notion of a solution of Eq. (\ref{eqn:SDE_BES}) is introduced analogously to Definition (\ref{def:solution}), but condition (iv) is omitted. Using the drift function $f_\delta(x) = |x|^{\delta -1}$, $x \in \mathbb{R}$, Eq. (\ref{eqn:SDE_BES}) coincides with
\begin{equation}\label{eqn:SDE_BES_mvasd}
	X_t = x_0 + B_t + \int_\mathbb{R} L_m^X(t,y)\,\myd f_\delta(y)\,.
\end{equation}
Indeed, for any solution $(X,\mathbb{F})$ of Eq. (\ref{eqn:SDE_BES_mvasd}), taking $\myd f_\delta(y) = (\delta-1) \, y^{-1} \, f_\delta(y)\, \myd y$ and the occupation times formula of Definition \ref{def:solution}(iv) into account, with $m$ given by $m(\myd y) = 2 \, f_\delta(y) \, \myd y$, we obtain
\begin{equation}\label{eqn:reformulation_drift_p}
	\begin{split}
		\int_{(0,+\infty)} L_m^X(t,y) \, \myd f_\delta(y) 
							&= \int_{(0,+\infty)}  \frac{\delta - 1}{2 \, y} \, L_m^X(t,y) \, m(\myd y) \\
							&= \int_0^t \frac{\delta - 1}{2\, X_s} \, \ind_{(0,+\infty)} (X_s) \, \myd \assPro{X}_s \\
							& = \int_0^t \frac{\delta - 1}{2\, X_s} \, \ind_{(0,+\infty)} (X_s) \, \myd s\,, \qquad t < S_\infty^X, \ \mathbf{P}\text{-a.s.}
	\end{split}
\end{equation}
Analogously, we have
\begin{equation}\label{eqn:reformulation_drift_n}
	\int_{(-\infty,0)} L_m^X(t,y) \, \myd f_\delta(y) = \int_0^t \frac{\delta - 1}{2\, X_s} \, \ind_{(-\infty,0)} (X_s) \, \myd s\,,
	\qquad t < S_\infty^X, \ \mathbf{P}\text{-a.s.}
\end{equation}
On the other hand, it was shown in S. Blei \cite{blei_bessel_2011}, Proposition 2.19, that for any solution $(X,\mathbb{F})$ of Eq. (\ref{eqn:SDE_BES}) there exists a local time $L_m^X$ that satisfies the requirements of Definition \ref{def:solution}(iv) with respect to the drift function $f_\delta$. Therefore, for any solution $(X,\mathbb{F})$ of Eq. (\ref{eqn:SDE_BES}) the equalities (\ref{eqn:reformulation_drift_p}) and (\ref{eqn:reformulation_drift_n}) hold as well. In addition, if $(X,\mathbb{F})$ is a solution of Eq. (\ref{eqn:SDE_BES}) or Eq. (\ref{eqn:SDE_BES_mvasd}), then $X$ has no occupation time in zero, which is an immediate consequence of the occupation times formula (\ref{eqn:occupationtime}):
\[
	\int_0^t \ind_{\{0\}} (X_s) \, \myd s = \int_0^t \ind_{\{0\}} (X_s) \, \myd \assPro{X}_s 
																				= \int_{\{0\}} L_+^X(t,y) \, \myd y
																				= 0\,, \qquad t < S_\infty^X, \ \mathbf{P}\text{-a.s.} 
\]
Hence, from (\ref{eqn:reformulation_drift_p}) and (\ref{eqn:reformulation_drift_n}) we obtain that any solution $(X,\mathbb{F})$ of Eq. (\ref{eqn:SDE_BES}) or Eq. (\ref{eqn:SDE_BES_mvasd}) satisfies
\[
	\int_0^t \frac{\delta - 1}{2\, X_s} \, \myd s = \int_{\mathbb{R}} L_m^X(t,y) \, \myd f_\delta(y)\,, \qquad t < S_\infty^X, \ \mathbf{P}\text{-a.s.}
\]
Consequently,  Eq. (\ref{eqn:SDE_BES}) coincides with Eq. (\ref{eqn:SDE_BES_mvasd}). Additionally, explosion does not occur. Indeed, for any solution $(X,\mathbb{F})$ of Eq. (\ref{eqn:SDE_BES}) we see that the process $X^2$ satisfies
\[
	X_t^2 = x_0^2 + 2 \int_0^t \sqrt{X^2_s} \, \myd B_s + \delta t\,, \qquad t < S_\infty^{X^2}, \ \mathbf{P}\text{-a.s.},
\]
which is an immediate consequence of the It\^o formula and the relation $S_\infty^X = S_\infty^{X^2}$. Since the coefficients of this equation possess at most linear growth, explosion does not occur, i.e., $S_\infty^X = S_\infty^{X^2} = +\infty$ $\mathbf{P}$-a.s. \\
\indent Using the primitive $G_\delta$ of $1/f_\delta$ and the fact that we have $F = F_- = \{0\}$ for $f_\delta$, Theorem \ref{theorem:isolated_points} states that for any solution $(X,\mathbb{F})$ the transformed process $(Y,\mathbb{F}) = (G_\delta(X),\mathbb{F})$ is a solution to
\begin{equation}\label{eqn:SDE_BES_transformed}
	Y_t = y_0 + \int_0^t \sigma_\delta(Y_s) \, \myd B_s + \int_0^t \ind_{\{0\}}(Y_s) \, \myd Y_s\,,
\end{equation}
where $y_0 = G(x_0)$, $\sigma_\delta = 1/f_\delta \circ H_\delta$ and $H_\delta$ denotes the inverse of $G_\delta$. Additionally, applying Proposition \ref{proposition:drift_part_pointwise_sum}, the drift can be rewritten as
\begin{equation}\label{eqn:jumps_loc_time_BES}
	\int_0^t \ind_{\{0\}}(Y_s) \, \myd Y_s  = \frac{1}{2} \left( L_+^Y(t,0) - L_-^Y(t,0) \right)
																				  = L_m^X(t,0) - L_m^X(t,0-)\,, \qquad t \geq 0, \ \mathbf{P}\text{-a.s.}
\end{equation}
These jumps of the local time $L_m^X$ are a degree of freedom and are responsible for the non-uniqueness of solutions of Eq. (\ref{eqn:SDE_BES_mvasd}). Indeed, in \cite{blei_bessel_2011}, Theorem 2.22, (in connection with \cite{blei_bessel_2011}, Remark 2.26(ii)) it is shown that, for every $\alpha \in (-\infty,\frac{1}{2})$, the equation
\begin{equation}\label{eqn:SDE_BES_controlled}
	\left\{
		\begin{gathered}
			X_t = x_0 + B_t + \int_\mathbb{R} L_m^X(t,y) \, \myd f_{\delta}(y)\,, \medskip \\
			L_m^X(t,0) - L_m^X(t,0-) = 2\, \alpha \, L_m^X(t,0)\,,
		\end{gathered}
	\right.
\end{equation}
possesses a unique solution, the so-called \emph{skew} $\delta$-dimensional Bessel process with skewness parameter $\alpha$ started at $x_0$. For $\alpha = 0$ the solution is, in correspondence with Section \ref{sec:symmetric_solutions}, also called the \emph{symmetric} $\delta$-dimensional Bessel process started at $x_0$. Clearly, for every $\alpha \in (-\infty,\frac{1}{2})$ we obtain solutions of Eq. (\ref{eqn:SDE_BES_mvasd}) different in law although we have $E_b = N_b = \emptyset$ and (\ref{eqn:jumps_loc_time_BES}) becomes
\begin{equation}\label{eqn:BES_part_bounded_var}
	\begin{split}
		\int_0^t \ind_{\{0\}}(Y_s) \, \myd Y_s  = \alpha \, L_+^Y(t,0)
																					  = 2\, \alpha \, L_m^X(t,0), \qquad t \geq 0, \ \mathbf{P}\text{-a.s.}
	\end{split}
\end{equation}

\noindent In addition, we remark that Eq. (\ref{eqn:SDE_BES_controlled}) can be also considered for $\alpha = 1/2$. In that case, for a positive starting point $x_0 \geq 0$ we obtain the $\delta$-dimensional Bessel process as the unique solution of Eq. (\ref{eqn:SDE_BES_controlled}), which stays positive and which is reflected to the positive half line at zero. For a negative starting point $x_0 < 0$ we obtain as well a unique solution that behaves like the Bessel process after it has reached zero, which happens with probability one. But in the following, we exclude the case of reflection. Moreover, as pointed out in \cite{blei_bessel_2011}, Lemma 2.25 and the remarks before, for a parameter $\alpha > 1/2$ there is no solution of Eq. (\ref{eqn:SDE_BES_controlled}). \\
\indent Now we come back to the general equation (\ref{eqn:SDE_mvasd}). Taking a good solution $(X,\mathbb{F})$, the process $(Y,\mathbb{F}) = (G(X),\mathbb{F})$ is a solution of Eq. (\ref{eqn:SDE_transformed_eqn}). If $F$ is countable, via Proposition \ref{proposition:drift_part_pointwise_sum} we obtain additionally
\[
	\int_0^t \ind_{G(F_-)} (Y_s) \, \myd Y_s = \frac{1}{2} \sum_{a \in G(F_-)} \left( L_+^Y(t,a) - L_-^Y(t,a)\right)
																					 = \sum_{a \in F_-} \left( L_m^X(t,a) - L_m^X(t,a-)\right).
\]
As in the example of the Bessel equation (\ref{eqn:SDE_BES_mvasd}), the jumps $L_m^X(t,a) - L_m^X(t,a-)$ of the local time $L_m^X$ in the points of the set $F_-$ are not determined by Eq. (\ref{eqn:SDE_mvasd}). For this reason, we adopt the concept of fixing these jumps similar to (\ref{eqn:SDE_BES_controlled}). In this way, we put more information on the structure of the part $\int_0^t \ind_{G(F_-)}(Y_s) \,\myd Y_s$ of locally bounded variation appearing in Eq. (\ref{eqn:SDE_transformed_eqn}). To realize this idea, we consider a set function $\nu$ defined on the bounded Borel sets such that $\nu$ is a finite signed measure on $\mathscr{B}([-N,N])$ for every $N\in \mathbb{N}$. Moreover, we assume that $\nu$ has no mass on $F_-^c$, i.e.,
\[
	|\nu|(F_-^c \cap [-N,N]) = 0, \qquad N \in \mathbb{N}\,,
\]
where $|\nu|$ denotes the total variation of $\nu$. By means of $\nu$, we control the jump sizes of $L_m^X$ in the points of $F_-$. For this purpose, we consider the equation
\begin{equation}\label{eqn:SDE_mvasd_controlled}
	\left\{
		\begin{aligned}
			\text{(i) \ \ }  & X_t = X_0 + \int_0^t b(X_s) \, \myd B_s + \int_{\mathbb{R}} L_m^X(t,y) \, \myd f(y)\,, \\[1ex]
			\text{(ii) \ \ } & L_m^X(t,a) - L_m^X(t,a-) = 2 \, L_m^X(t,a)\, \nu(\{a\})\,, \qquad a \in F_- \,.
		\end{aligned}
	\right.
\end{equation}
The notion of a solution of Eq. (\ref{eqn:SDE_mvasd_controlled}) is introduced as in Definition \ref{def:solution}, but in addition we require that Eq. (\ref{eqn:SDE_mvasd_controlled})(ii) holds for all $t < S_\infty^X$ $\mathbf{P}$-a.s. Motivated by the observations in the example of the Bessel equation (\ref{eqn:SDE_BES_controlled}), we come to the following 
\begin{defi}
	A solution of Eq. (\ref{eqn:SDE_mvasd_controlled}) is called a \emph{skew solution of Eq. (\ref{eqn:SDE_mvasd}) with skewness parameter $\nu$.}
\end{defi}
\noindent In particular, using $\nu \equiv 0$ in Eq. (\ref{eqn:SDE_mvasd_controlled}), we describe symmetric solutions of Eq. (\ref{eqn:SDE_mvasd}). \\
\indent Denoting by $\nu^G := \nu \circ G^{-1}$ the image of $\nu$ under $G:\mathbb{R}\rightarrow \mathbb{R}$, we concretise our results concerning the space transformations $G$ and $H$ in the new situation of Eq. (\ref{eqn:SDE_mvasd_controlled}).
\begin{proposition}\label{proposition:points_controlled}
	(i) Let $F$ be countable. If $(X,\mathbb{F})$ is a skew good solution of Eq. (\ref{eqn:SDE_mvasd}) with skewness parameter $\nu$, then 
	$(Y,\mathbb{F}) = (G(X),\mathbb{F})$ is a solution of Eq. (\ref{eqn:SDE_transformed_eqn}), which satisfies
	\begin{equation}\label{eqn:drift_as_gen_drift}
		\int_0^t \ind_{G(F_-)}(Y_s) \, \myd Y_s = \int_\mathbb{R} L_+^Y(t,y) \, \nu^G(\myd y), 	\qquad t < S^Y_{G(\mathbb{R})}, \ \mathbf{P}\text{-a.s.},
	\end{equation}
	or, equivalently, 
	\begin{equation}\label{eqn:SDE_transformed_controlled}
		Y_t = Y_0 + \int_0^t \sigma(Y_s)\, \myd B_s + \int_\mathbb{R} L_+^Y(t,y) \, \nu^G(\myd y), 	
		\qquad t < S^Y_{G(\mathbb{R})}, \ \mathbf{P}\text{-a.s.}
	\end{equation}

	(ii) Conversely, for arbitrary $F$ it holds: If $(Y,\mathbb{F})$ is a solution of Eq. (\ref{eqn:SDE_transformed_eqn}) which satisfies 
	(\ref{eqn:drift_as_gen_drift}) or, equivalently, (\ref{eqn:SDE_transformed_controlled}), then $(X,\mathbb{F}) = (H(Y),\mathbb{F})$ is a skew good 
	solution of Eq. (\ref{eqn:SDE_mvasd}) with skewness parameter $\nu$.\smallskip

	(iii) If $F$ is countable, in both statements (i) and (ii), (\ref{eqn:drift_as_gen_drift}) is  as well equivalent to
	\begin{equation}\label{eqn:drift_as_sum}
		L_+^Y(t,a) - L_-^Y(t,a) = 2 \, L_+^Y(t,a) \, \nu^G(\{a\}),  \qquad t < S^Y_{G(\mathbb{R})}, \ a \in G(F_-), \ \mathbf{P}\text{-a.s.}
	\end{equation}

	(iv) Suppose $|F^A \cap [-N,N]| < +\infty$, $N \in \mathbb{N}$. Then any skew solution $(X,\mathbb{F})$ of Eq. (\ref{eqn:SDE_mvasd}) with skewness 
	parameter $\nu$ is also a good solution, and hence $(Y,\mathbb{F}) = (G(X),\mathbb{F})$ is a solution of Eq. 
	(\ref{eqn:SDE_transformed_eqn}) satisfying (\ref{eqn:drift_as_gen_drift}).
\end{proposition}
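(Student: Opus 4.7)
The plan is to reduce each of the four statements to earlier machinery: Theorems \ref{theorem:a_prioiri_semimart}, \ref{theorem:spacetrans_H}, and \ref{theorem:finitely_many_accumulation_points}, together with Proposition \ref{proposition:drift_part_pointwise_sum} and Lemma \ref{lemma:relation_loc_times}. The central translation used throughout is $L_\pm^Y(t,G(a)) = 2\,L_m^X(t,a\pm)$, which converts jumps of $L_m^X$ at points of $F_-$ into jumps of $L_+^Y$ at $G(F_-)$ and the skewness measure $\nu$ on $F_-$ into its image $\nu^G$ on $G(F_-)$. For assertion (i), $(X,\mathbb{F})$ being good already forces $(Y,\mathbb{F}) = (G(X),\mathbb{F})$ to solve (\ref{eqn:SDE_transformed_eqn}); Proposition \ref{proposition:drift_part_pointwise_sum} then rewrites the drift as $\sum_{a \in F_-}(L_m^X(t,a)-L_m^X(t,a-))$; substituting the skewness condition (\ref{eqn:SDE_mvasd_controlled})(ii) and applying Lemma \ref{lemma:relation_loc_times} recasts each summand as $L_+^Y(t,G(a))\,\nu(\{a\})$, and summing over the countable set $F_-$ yields $\int_\mathbb{R} L_+^Y(t,y)\,\nu^G(\myd y)$, since $\nu^G$ is then purely atomic on $G(F_-)$. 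The forward direction of (iii) is exactly the same argument at the level of individual atoms; the converse is obtained by summing (\ref{eqn:drift_as_sum}) over the countable set $G(F_-)$ and reapplying Proposition \ref{proposition:drift_part_pointwise_sum}.

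For (ii), Theorem \ref{theorem:spacetrans_H} directly yields that $(X,\mathbb{F}) = (H(Y),\mathbb{F})$ solves (\ref{eqn:SDE_mvasd}), and it is good since $Y$ is a semimartingale by assumption. To extract the skewness identity, I denote by $V$ the finite-variation part of $Y$; combining (\ref{eqn:drift_int_with_respect_to_process}) with (\ref{eqn:drift_as_gen_drift}) yields $V_t = \int_\mathbb{R} L_+^Y(t,y)\,\nu^G(\myd y)$. I then compute $\int_0^t \ind_{\{a\}}(Y_s)\,\myd V_s$ in two ways: property (\ref{eqn:loc_time_and_variation_process}) gives $\tfrac{1}{2}(L_+^Y(t,a) - L_-^Y(t,a))$, while a Fubini interchange combined with the support property (\ref{eqn:int_wrt_loc_time}) of the local time measure $L_+^Y(\myd s,y)$ gives $L_+^Y(t,a)\,\nu^G(\{a\})$. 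Equating the two expressions and translating via Lemma \ref{lemma:relation_loc_times} produces precisely the skewness identity at every point of $F_-$.

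For (iv), I verify the two hypotheses of Theorem \ref{theorem:finitely_many_accumulation_points}. The skewness condition makes $|L_m^X(t,a)-L_m^X(t,a-)| = 2\,L_m^X(t,a)\,|\nu(\{a\})|$, and Corollary \ref{corr:comp_interval} confines non-trivial contributions to the compact random interval $I_t := [\min_{s\leq t}X_s,\,\max_{s\leq t}X_s]$, on which the c\`adl\`ag map $L_m^X(t,\,\cdot\,)$ is bounded by some $K_t < +\infty$; hence the absolute sum is dominated by $2\,K_t\,|\nu|(I_t) < +\infty$. Each summand $2\,\nu(\{a_n\})\,L_m^X(\,\cdot\,,a_n)$ is monotone in $t$, so its total variation on $[0,T]$ equals $2\,|\nu(\{a_n\})|\,L_m^X(T,a_n)$, and the partial sums therefore converge locally in variation by the same bound. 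I expect the main obstacle to lie in (ii), where the jump identity has to be extracted at every point of $F_-$ even when $F$ is uncountable, relying on the careful Fubini interchange and the application of (\ref{eqn:int_wrt_loc_time}) to isolate the atom at $a$; items (iii) and (iv) are then essentially bookkeeping built on top of (i) and (ii).
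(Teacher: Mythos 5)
Your proposal is correct and follows essentially the same route as the paper's proof: the same translation $L_\pm^Y(t,G(a)) = 2L_m^X(t,a\pm)$ from Lemma \ref{lemma:relation_loc_times}, the same use of Proposition \ref{proposition:drift_part_pointwise_sum} and atomicity of $\nu^G$ for the countable case, the same extraction of the atom-level identity via (\ref{eqn:loc_time_and_variation_process}) and (\ref{eqn:int_wrt_loc_time}), and the same verification of the two hypotheses of Theorem \ref{theorem:finitely_many_accumulation_points} for (iv). The only difference is organizational — the paper establishes the equivalence (iii) first and then reads (i) and (ii) off from it, whereas you prove (i) directly and embed the atom-extraction computation inside (ii) — with no change in mathematical content.
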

\begin{proof} For any solution $(Y,\mathbb{F})$ of Eq. (\ref{eqn:SDE_transformed_eqn}), the relations (\ref{eqn:drift_as_gen_drift}) and (\ref{eqn:SDE_transformed_controlled}) are of course equivalent. Moreover, using (\ref{eqn:loc_time_and_variation_process}) and (\ref{eqn:int_wrt_loc_time}), (\ref{eqn:drift_as_gen_drift}) implies
\[
	\begin{split}
		L_+^Y(t,a) - L_-^Y(t,a) &= 2 \int_0^t \ind_{\{a\}}(Y_s) \int_\mathbb{R} L_+^Y(\myd s,y) \, \nu^G(\myd y) \\
														&= 2 \, L_+^Y(t,a) \, \nu^G(\{a\}), \qquad t < S^Y_{G(\mathbb{R})}, \ a \in G(F_-), \ \mathbf{P}\text{-a.s.}
	\end{split}
\]
Conversely, if $F$ is countable, by the same arguments as in (\ref{eqn:drift_part_pointwise_sum}) it is
\[
	\int_0^t \ind_{G(F_-)}(Y_s) \, \myd Y_s = \frac{1}{2} \sum_{a \in G(F_-)} \left(L_+^Y(t,a) - L_-^Y(t,a) \right), 
																						\qquad t < S^Y_{G(\mathbb{R})}, \ \mathbf{P}\text{-a.s.}
\]																			
and (\ref{eqn:drift_as_sum}) implies
\begin{equation}\label{eqn:no_fulfilled_by_singular_part}
	\begin{split}
		\int_0^t \ind_{G(F_-)}(Y_s) \, \myd Y_s	&= \sum_{a \in G(F_-)} L_+^Y(t,a) \, \nu^G(\{a\}) \\
																					  &= \int_\mathbb{R} L_+^Y(t,y) \, \nu^G(\myd y), \qquad t < S^Y_{G(\mathbb{R})}, \ \mathbf{P}\text{-a.s.}
	\end{split}
\end{equation}
which is (\ref{eqn:drift_as_gen_drift}). Hence, we have proven (iii). To show (i), it remains to use Lemma \ref{lemma:relation_loc_times} and (\ref{eqn:SDE_mvasd_controlled})(ii) implies immediately that (\ref{eqn:drift_as_sum}) is fulfilled for $(Y,\mathbb{F}) = (G(X),\mathbb{F})$. To verify (ii), from Theorem \ref{theorem:spacetrans_H} we see at once that Eq. (\ref{eqn:SDE_mvasd_controlled})(i) is satisfied for $X = H(Y)$. Moreover, via Lemma \ref{lemma:relation_loc_times} we conclude
\[
	\begin{split}
		L_m^X(t,a) - L_m^X(t,a-) &= \frac{1}{2} \left( L_+^Y(t,G(a)) - L_-^Y(t,G(a)) \right) \\
														 &= L_+^Y(t,G(a)) \, \nu^G(\{G(a)\})	\\
														 &= 2 \, L_m^X(t,a) \, \nu(\{a\}), \qquad t < S_\infty^X, \ a \in F_-, \ \mathbf{P}\text{-a.s.}
	\end{split}
\]
Finally, to deduce (iv) it just remains to show that the conditions (i) and (ii) of Theorem \ref{theorem:finitely_many_accumulation_points} are fulfilled. If $F_-$ is infinite, then, using  Eq. (\ref{eqn:SDE_mvasd_controlled})(ii), Corollary \ref{corr:comp_interval} and the assumption on $\nu$, we obtain
\[
	\sum_{a \in F_-} \left| L_m^X(t,a) - L_m^X(t,a-) \right| \leq 2 \sum_{a \in F_-} L_m^X(t,a) \, |\nu|(\{a\})	< +\infty, 
	\qquad t < S_\infty^X, \ \mathbf{P}\text{-a.s.}, 
\]
i.e., condition (i) of Theorem \ref{theorem:finitely_many_accumulation_points} is satisfied. Furthermore, let $\{a_1, a_2, \ldots\}$ be an arbitrary enumeration of $F_-$. Then, again by Eq. (\ref{eqn:SDE_mvasd_controlled})(ii), we have
\[
	\left(\sum_{i=1}^n (L_m^X(\,.\,, a_i) - L_m^X(\,.\,, a_i-))\right)_{n\in \mathbb{N}}
		= \int_{\{a_1, \ldots, a_n\}} 2 \, L_m^X(t,y) \, \nu(\myd y)
\] 
which for $n \rightarrow +\infty$ clearly converges $\mathbf{P}$-a.s. locally in variation on $[0, S_\infty^X)$ to 
\[
	\int_{F_-} 2 \, L_m^X(t,y) \, \nu(\myd y) < +\infty\,.
\]
Hence, condition \ref{theorem:finitely_many_accumulation_points}(ii) is also fulfilled, which ends the proof. 
\end{proof}
\indent For uncountable $F$ or, equivalently, for uncountable $F_-$, the equivalence of (\ref{eqn:drift_as_gen_drift}) and (\ref{eqn:drift_as_sum}) does not hold in general. Indeed, in case of an uncountable set $F_-$ it is possible that $\nu$, and hence $\nu^G$, which are concentrated on the Lebesgue null sets $F_-$ and $G(F_-)$, respectively, have a singular continuous part besides a singular discrete part. Thus we cannot justify the last equality in (\ref{eqn:no_fulfilled_by_singular_part}). \\
\indent Proposition \ref{proposition:points_controlled} reveals the relation between solutions of Eq. (\ref{eqn:SDE_mvasd_controlled}), in which we control the jumps of $L_m^X$ in the points of $F_-$ by $\nu$, and solutions of Eq. (\ref{eqn:SDE_transformed_eqn}), which additionally possess a drift of type of Eq. (\ref{eqn:SDE_mvd}). This is the point where the results of H.J. Engelbert and W. Schmidt \cite{engelbert_schmidt:1985} and \cite{engelbert_schmidt:1989_III} concerning equations of type (\ref{eqn:SDE_mvd}) come into play. As in (\ref{eqn:cond_atoms}) and according to \cite{blei_engelbert_2012}, Theorem 2.2 and Corollary 2.5 and 2.6, we always suppose
\[
	\nu(\{a\}) < 1/2, \qquad a \in F_-\,,
\]
which implies $\nu^G(\{a\}) < 1/2$, $a \in G(F_-)$. On the one hand, we want to exclude the phenomenon of reflection in the points of $F_-$, which is described by $\nu(\{a\}) = 1/2$. On the other hand, if we have $\nu(\{a\}) > 1/2$, and therefore $\nu^G(\{a\}) > 1/2$, for an $a \in F_-$, then, in general, the existence of a solution to an equation which has a drift of type (\ref{eqn:SDE_mvd}) fails.\\
\indent Concerning the existence of skew good solutions of Eq. (\ref{eqn:SDE_mvasd}) we can state the following theorem.
\begin{theorem}\label{theorem:ex_skew_sol}
(i) Let $F$ be arbitrary. Suppose $E_{b/\sqrt{f}} \subseteq N_b$ is satisfied. Then for every initial distribution there exists a skew good solution of Eq. (\ref{eqn:SDE_mvasd}) with skewness parameter $\nu$. \smallskip

(ii) Let $F$ be countable. Then for every initial distribution there exists a skew good solution of Eq. (\ref{eqn:SDE_mvasd}) with skewness parameter $\nu$ if and only if the condition $E_{b/\sqrt{f}} \subseteq N_b$ is satisfied.
\end{theorem}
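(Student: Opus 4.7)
The strategy is the same as for Theorem \ref{theorem:ex_symmetric_solutions}, only now the transformed equation retains a drift of type (\ref{eqn:SDE_mvd}) governed by the image measure $\nu^{G} := \nu \circ G^{-1}$, so the existence criterion is applied to a genuine SDE with generalized drift rather than to an equation without drift.

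\textbf{Sufficiency in (i) and (ii).} Fix an arbitrary initial distribution $\mu$ on $\mathbb{R}$ and set $\mu^G := \mu \circ G^{-1}$. Consider the auxiliary SDE (\ref{eqn:SDE_transformed_controlled}), viewed as an equation of type (\ref{eqn:SDE_mvd}) with diffusion coefficient $\sigma = (b/f)\circ H$ and drift measure $\nu^G$. By Lemma \ref{lemma:connection_sets}, the hypothesis $E_{b/\sqrt{f}} \subseteq N_b$ is equivalent to $E_\sigma \subseteq N_\sigma$, which is exactly the condition required by Theorem \ref{theorem:e_u_gen_drift}(i). This produces a solution $(Y,\mathbb{F})$ with initial distribution $\mu^G$. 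Proposition \ref{proposition:points_controlled}(ii) — which does not require $F$ to be countable — then yields that $(X,\mathbb{F}) := (H(Y),\mathbb{F})$ is a skew good solution of Eq. (\ref{eqn:SDE_mvasd}) with skewness parameter $\nu$ and initial distribution $\mu$.

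\textbf{Necessity in (ii).} Suppose $F$ is countable and that for every initial distribution a skew good solution of Eq. (\ref{eqn:SDE_mvasd}) with skewness parameter $\nu$ exists. Fix $x_0 \in E_{b/\sqrt{f}}$ and pick such a solution $(X,\mathbb{F})$ with $X_0 = x_0$. Since $F$ is countable, Proposition \ref{proposition:points_controlled}(i) applies and $(Y,\mathbb{F}) := (G(X),\mathbb{F})$ solves Eq. (\ref{eqn:SDE_transformed_controlled}), which is of type (\ref{eqn:SDE_mvd}). By Lemma \ref{lemma:connection_sets} the point $y_0 := G(x_0)$ belongs to $E_\sigma$, so Lemma \ref{lemma:stopping} forces $Y \equiv y_0$. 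Consequently $\langle Y\rangle_t = \int_0^t \sigma^2(Y_s)\,\myd s = \sigma^2(y_0)\,t = 0$ for every $t \geq 0$, giving $y_0 \in N_\sigma$; a second application of Lemma \ref{lemma:connection_sets} yields $x_0 \in N_b$. Arbitrariness of $x_0$ concludes $E_{b/\sqrt{f}} \subseteq N_b$.

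\textbf{Main obstacle.} The argument reduces everything to Theorem \ref{theorem:e_u_gen_drift}, Lemma \ref{lemma:stopping} and Proposition \ref{proposition:points_controlled}, so the only delicate point is to check that $\nu^G$ qualifies as an admissible drift measure for the auxiliary SDE of type (\ref{eqn:SDE_mvd}). The atomic bound $\nu^G(\{y\}) < 1/2$ follows at once from the bijectivity of $G$ on $\mathbb{R}$ together with the standing assumption $\nu(\{a\}) < 1/2$ for $a \in F_-$. The local finiteness of $\nu^G$ on bounded Borel sets needs a small verification, since $G(\mathbb{R})$ may be a bounded interval; here one uses that $\nu^G$ is concentrated on $G(F_-) \subset G(\mathbb{R})$ and localizes via compact subintervals $[a_n,b_n] \subset G(\mathbb{R})$ exhausting $G(\mathbb{R})$ — exactly the device already employed in the proof of Theorem \ref{theorem:ex_u_un_symmetric_sol} — before invoking Theorem \ref{theorem:e_u_gen_drift}(i) on each localized equation and pasting.
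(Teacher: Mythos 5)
Your proposal follows the paper's proof essentially verbatim: transform by $G$, solve the generalized-drift equation with drift measure $\nu^G$ via Theorem \ref{theorem:e_u_gen_drift}(i) and Lemma \ref{lemma:connection_sets}, pull back by $H$ through Proposition \ref{proposition:points_controlled}, and for necessity localize to a compact subinterval of $G(\mathbb{R})$ before invoking Lemma \ref{lemma:stopping}. The only step you gloss over (and should write out) is the short verification that the solution $Y$ of the auxiliary equation really is a solution of Eq. (\ref{eqn:SDE_transformed_eqn}) satisfying (\ref{eqn:drift_as_gen_drift}) --- namely that $Y_t = Y_{t\wedge S^Y_{G(\mathbb{R})}}$ because $E_\sigma^c \subseteq G(\mathbb{R})$, and that $\int_0^t \ind_{G(F_-)}(Y_s)\,\myd Y_s = \int_{\mathbb{R}} L_+^Y(t,y)\,\nu^G(\myd y)$ via (\ref{eqn:int_wrt_loc_time}) and the fact that $\nu^G$ is carried by $G(F_-)$ --- since this is precisely the hypothesis under which Proposition \ref{proposition:points_controlled}(ii) may be applied.
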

\begin{proof} 
To prove (i) and the sufficiency of the condition $E_{b/\sqrt{f}} \subseteq N_b$ in (ii), we choose an arbitrary initial distribution $\mu$. Lemma \ref{lemma:connection_sets} implies $E_\sigma \subseteq N_\sigma$. Hence, by Theorem \ref{theorem:e_u_gen_drift}(i) the equation 
\[
	Y_t = Y_0 + \int_0^t \sigma(Y_s) \, \myd B_s + \int_\mathbb{R} L_+^Y(t,y) \, \nu^G(\myd y)
\]
possesses a solution $(Y,\mathbb{F})$ for the initial distribution $\mu \circ G^{-1}$. Moreover, since we have $E_\sigma^c \subseteq G(\mathbb{R})$, from Lemma \ref{lemma:stopping} we obtain $Y_t = Y_{t \wedge S^Y_{G(\mathbb{R})}}$, $t \geq 0$, $\mathbf{P}$-a.s. Analogously to (\ref{eqn:drift_int_with_respect_to_process}), we deduce
\[
	\int_0^t \ind_{G(F_-)}(Y_s) \, \myd Y_s = \int_0^t \ind_{G(F_-)}(Y_s) \int_{\mathbb{R}} L_+^Y(\myd s,y) \, \nu^G(\myd y), 
	\qquad t < S_{G(\mathbb{R})}^Y\,, \ \mathbf{P}\text{-a.s.}
\]
Using (\ref{eqn:int_wrt_loc_time}) and the fact that $\nu^G$ is concentrated on $G(F_-)$, it follows
\[
	\int_0^t \ind_{G(F_-)}(Y_s) \, \myd Y_s = \int_\mathbb{R} L_+^Y(t,y) \, \nu^G(\myd y), \qquad t < S_{G(\mathbb{R})}^Y\,, \ \mathbf{P}\text{-a.s.}
\]
Thus, $(Y,\mathbb{F})$ is also a solution of Eq. (\ref{eqn:SDE_transformed_eqn}). Finally, Proposition \ref{proposition:points_controlled}(ii) implies that $(X,\mathbb{F}) =(H(Y),\mathbb{F})$ is a skew good solution of Eq. (\ref{eqn:SDE_mvasd}) with skewness parameter $\nu$ and initial distribution $\mu$. \\
\indent To prove the necessity of $E_{b/\sqrt{f}}\subseteq N_b$ in (ii), we fix $x_0 \in E_{b/\sqrt{f}}$ and take a skew good solution $(X,\mathbb{F})$ of Eq. (\ref{eqn:SDE_mvasd}) with skewness parameter $\nu$ started at $X_0 = x_0$. Setting $Y = G(X)$, by Proposition \ref{proposition:points_controlled}(i) we conclude that $(Y,\mathbb{F})$ is a solution of Eq. (\ref{eqn:SDE_transformed_eqn}) satisfying
\[
	Y_t = Y_0 + \int_0^t \sigma(Y_s)\, \myd B_s + \int_\mathbb{R} L_+^Y(t,y) \, \nu^G(\myd y), 	\qquad t < S^Y_{G(\mathbb{R})}, \ \mathbf{P}\text{-a.s.}
\]
Lemma \ref{lemma:connection_sets} implies $Y_0 = y_0 = G(x_0) \in E_\sigma$. Choosing an interval $(a,b)$ which satisfies $y_0 \in (a,b)$ and $[a,b] \subseteq G(\mathbb{R})$, we define the $\mathbb{F}$-stopping time $S := \inf \{t \geq 0: Y_t \notin (a,b) \}$. For the stopped process $Y^S_t := Y_{S \wedge t}$, $t \geq 0$, we obtain
\begin{equation}\label{eqn:solution_gen_drift_local}
	\begin{split}
		Y_t^S & =	y_0 + \int_0^{S \wedge t} \sigma(Y_s) \, \myd B_s + \int_\mathbb{R} L_+^Y(S \wedge t, y) \, \nu(\myd y) \\
					& =	y_0 + \int_0^t \sigma(Y^S_s) \, \myd B_s + \int_\mathbb{R} L_+^{Y^S}(t, y) \, \nu^G(\myd y), 
									\qquad t \geq 0, \ \mathbf{P}\text{-a.s.},
	\end{split}
\end{equation}
where we used $S \leq S_{G(\mathbb{R})}^Y$ and, in particular, $S < S_{G(\mathbb{R})}^Y$ on $\{S_{G(\mathbb{R})}^Y < +\infty\}$ to write $t \geq 0$ instead of $t < S^Y_{G(\mathbb{R})}$. Moreover, the relation $L_+^Y(S \wedge t, y) = L_+^{Y^S}(t, y)$, $t \geq 0$, $y \in \mathbb{R}$, $\mathbf{P}$-a.s. can be easily deduced from (\ref{eqn:gen_ito_formula}). Introducing $\nu_{(a,b)}^G := \nu^G(\,.\, \cap (a,b))$, due to (\ref{eqn:loc_time_zero_outside_compact_interval}) the drift part of $Y^S$ can be rewritten as
\begin{equation}\label{eqn:solution_gen_drift_local_drift_part}
	\int_\mathbb{R} L_+^{Y^S}(t,y) \, \nu^G(\myd y) = \int_\mathbb{R} L_+^{Y^S}(t,y) \, \nu_{(a,b)}^G(\myd y), \qquad t \geq 0, \ \mathbf{P}\text{-a.s.}
\end{equation}
Summarizing, $(Y^S,\mathbb{F})$ is a solution to an equation of type (\ref{eqn:SDE_mvd}) started at $y_0 \in E_\sigma$. Hence, Lemma \ref{lemma:stopping} implies $Y^S_t = y_0$, $t \geq 0$, $\mathbf{P}$-a.s. and similar to (\ref{eqn:belonging_to_zeros_of_sigma}) and the lines thereafter we conclude $E_{b/\sqrt{f}} \subseteq N_b$.
\end{proof}

For skew good solutions of Eq. (\ref{eqn:SDE_mvasd}) the following uniqueness result holds.
\begin{theorem}\label{theorem:ex_and_un_unique_skew_sol}
(i) Let $F$ be arbitrary. Suppose $E_{b/\sqrt{f}} \subseteq N_b$. If for every initial distribution Eq. (\ref{eqn:SDE_mvasd}) possesses a unique skew good solution with skewness parameter $\nu$, then it holds $E_{b/\sqrt{f}} = N_b$. \smallskip

(ii) Let $F$ be countable. Then for every initial distribution Eq. (\ref{eqn:SDE_mvasd_controlled}) possesses	a unique skew good solution with skewness parameter $\nu$ if and only if the condition $E_{b/\sqrt{f}} = N_b$ is satisfied.
\end{theorem}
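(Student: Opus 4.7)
My plan is to mirror the proof of Theorem~\ref{theorem:ex_u_un_symmetric_sol}, replacing the reduction to an equation without drift by a reduction to an equation of type~(\ref{eqn:SDE_mvd}) with generalized drift. Concretely, by Proposition~\ref{proposition:points_controlled}, a skew good solution $(X,\mathbb{F})$ of Eq.~(\ref{eqn:SDE_mvasd}) with skewness parameter $\nu$ is the same thing as a solution $(Y,\mathbb{F})=(G(X),\mathbb{F})$ of Eq.~(\ref{eqn:SDE_transformed_eqn}) satisfying~(\ref{eqn:drift_as_gen_drift}), and by that identity such a $Y$ is a solution of the equation
\[
  Y_t=Y_0+\int_0^t\sigma(Y_s)\,\myd B_s+\int_{\mathbb{R}}L_+^Y(t,y)\,\nu^G(\myd y)
\]
of type~(\ref{eqn:SDE_mvd}) with coefficients $\sigma=(b/f)\circ H$ and $\nu^G=\nu\circ G^{-1}$, stopped when it leaves $G(\mathbb{R})$. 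Lemma~\ref{lemma:connection_sets} transports the conditions $E_{b/\sqrt f}\subseteq N_b$ and $E_{b/\sqrt f}=N_b$ into $E_\sigma\subseteq N_\sigma$ and $E_\sigma=N_\sigma$, which are exactly those governing existence and uniqueness in the Engelbert--Schmidt result Theorem~\ref{theorem:e_u_gen_drift} for the transformed equation.

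For the sufficiency in~(ii), let $(X^i,\mathbb{F}^i)$, $i=1,2$, be two skew good solutions of Eq.~(\ref{eqn:SDE_mvasd}) with skewness parameter $\nu$ and the same initial distribution, and put $Y^i=G(X^i)$. I would localize exactly as in the proof of Theorem~\ref{theorem:ex_u_un_symmetric_sol}(ii): choose intervals $[a_n,b_n]$ exhausting $G(\mathbb{R})$ as in~(\ref{eqn:sequence_of_intervals}), stop $Y^i$ at $S_n^i:=\inf\{t\geq 0:Y^i_t\notin(a_n,b_n)\}$, and truncate the drift measure to $\nu_n^G:=\nu^G(\,\cdot\,\cap(a_n,b_n))$, which is permitted by~(\ref{eqn:loc_time_zero_outside_compact_interval}) exactly as in~(\ref{eqn:solution_gen_drift_local_drift_part}). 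The stopped processes then solve the same equation of type~(\ref{eqn:SDE_mvd}) with coefficients $\sigma_n:=\sigma\,\ind_{(a_n,b_n)}$ and drift measure $\nu_n^G$, which satisfies $E_{\sigma_n}=N_{\sigma_n}$. Hence Theorem~\ref{theorem:e_u_gen_drift}(ii) yields equality in law of $Y^{1,n}$ and $Y^{2,n}$ on $C_{\overline{\mathbb{R}}}([0,+\infty))$, and the coordinate-process argument from the proof of Theorem~\ref{theorem:ex_u_un_symmetric_sol}(ii) lets me pass to the limit $n\to+\infty$, first obtaining equality in law of $Y^1,Y^2$ and then, via $H$, of $X^1,X^2$.

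For~(i), I argue by contraposition: assume $E_{b/\sqrt f}\subseteq N_b$ together with uniqueness of skew good solutions and pick, for contradiction, some $x_0\in E_{b/\sqrt f}^c\cap N_b$. Setting $y_0=G(x_0)$, Lemma~\ref{lemma:connection_sets} gives $y_0\in E_\sigma^c\cap N_\sigma$. The constant process $\overline X\equiv x_0$ is a skew good solution of Eq.~(\ref{eqn:SDE_mvasd}) starting at $x_0$, so it suffices to produce a non-trivial one with the same initial point. I obtain it by taking a non-constant (``fundamental'') solution $(Y,\mathbb{F})$ of the equation of type~(\ref{eqn:SDE_mvd}) with coefficients $\sigma,\nu^G$ started at $y_0$ -- available since $y_0\in E_\sigma^c\cap N_\sigma$ by the Engelbert--Schmidt fundamental-solution construction (\cite{engelbert_schmidt:1989_III}, Theorem~(4.17)) applied after the Zvonkin reduction of Eq.~(\ref{eqn:SDE_mvd}) to Eq.~(\ref{eqn:SDE_without_drift}) -- and setting $X:=H(Y)$. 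As already verified in the proof of Theorem~\ref{theorem:ex_skew_sol}, such a $Y$ automatically satisfies Eq.~(\ref{eqn:SDE_transformed_eqn}) and~(\ref{eqn:drift_as_gen_drift}), so Proposition~\ref{proposition:points_controlled}(ii) turns $X$ into a skew good solution of Eq.~(\ref{eqn:SDE_mvasd}) with skewness parameter $\nu$ starting at $x_0$ whose law differs from that of $\overline X$, contradicting uniqueness. The necessity part of~(ii) follows because uniqueness for every initial distribution implies existence for every initial distribution and hence $E_{b/\sqrt f}\subseteq N_b$ by Theorem~\ref{theorem:ex_skew_sol}(ii); (i) then upgrades this inclusion to equality.

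The main obstacle is securing the non-triviality of the fundamental solution used in the contradiction step: one needs to check that the Engelbert--Schmidt construction at a point of $E^c\cap N$ survives the presence of the generalized drift $\nu^G$. Thanks to the atomicity hypothesis $\nu(\{a\})<1/2$ for $a\in F_-$, which translates into $\nu^G(\{a\})<1/2$ on $G(F_-)$, the Zvonkin change of space associated with $\nu^G$ is a continuous, strictly increasing local bijection near $y_0$, so the classical non-trivial driftless fundamental solution pulls back to a non-constant process at $y_0$; the remaining steps are bookkeeping with the $G$-, $H$-transforms and the local-time identities of Section~\ref{sec:prop_loc_time}.
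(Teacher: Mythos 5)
Your proposal is correct and follows essentially the same route as the paper's proof: contraposition via the constant solution versus the non-trivial fundamental solution of the transformed equation with generalized drift $\nu^G$ (the paper simply cites \cite{engelbert_schmidt:1989_III}, Theorem (4.35) for the latter, where you reconstruct it through the Zvonkin reduction), pulled back through $H$ and Proposition \ref{proposition:points_controlled}(ii) for part (i), and the localization to intervals $[a_n,b_n]$ with truncated coefficients $\sigma_n$, $\nu_n^G$ followed by Theorem \ref{theorem:e_u_gen_drift}(ii) and the coordinate-process limit for the sufficiency in (ii).
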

\begin{proof} The proof is accomplished similarly to the proof of Theorem \ref{theorem:ex_u_un_symmetric_sol}. To begin with we show (i) and the necessity of $E_{b/\sqrt{f}} = N_b$ in (ii). For this purpose, let us assume that, for every initial distribution, Eq. (\ref{eqn:SDE_mvasd}) possesses a unique skew good solution with skewness parameter $\nu$. In case of a countable set $F$ by means of Theorem \ref{theorem:ex_skew_sol}(ii), this implies $E_{b/\sqrt{f}} \subseteq N_b$. For uncountable $F$ we suppose that $E_{b/\sqrt{f}} \subseteq N_b$ holds. Now we accomplish this part of the proof by contraposition. Let us assume that $E_{b/\sqrt{f}} = N_b$ does not hold, i.e., there exists an $x_0 \in E^c_{b/\sqrt{f}} \cap N_b$. Then, clearly, $\overline{X} \equiv x_0$ is a skew good solution of Eq. (\ref{eqn:SDE_mvasd_controlled}) with skewness parameter $\nu$. On the other hand, we can also find a non-trivial skew good solution of Eq. (\ref{eqn:SDE_mvasd}) with skewness parameter $\nu$ started at $x_0$. Indeed, because of Lemma \ref{lemma:connection_sets}, $E_{b/\sqrt{f}} \subseteq N_b$ implies $E_\sigma \subseteq N_\sigma$. Hence, there exists the so-called fundamental solution $(Y,\mathbb{F})$ to 
\[
	Y_t = Y_0 + \int_0^t \sigma(Y_s) \, \myd B_s + \int_\mathbb{R} L_+^Y(t,y) \, \nu^G(\myd y)
\]
started at $y_0 = G(x_0) \in E_\sigma^c \cap N_\sigma$ and $Y$ is non-trivial (see \cite{engelbert_schmidt:1989_III}, Definition (4.16) and Theorem (4.35)). Moreover, using the same arguments as in the first part of the proof of Theorem \ref{theorem:ex_skew_sol}, it follows that $(Y,\mathbb{F})$ also solves Eq. (\ref{eqn:SDE_transformed_eqn}). Via Proposition \ref{proposition:points_controlled}(ii) we conclude that $(X,\mathbb{F})$ given by $X = H(Y)$ is a non-trivial skew good solution of Eq. (\ref{eqn:SDE_mvasd}) with skewness parameter $\nu$ and initial point $x_0$. \\
\indent To show that the condition $E_{b/\sqrt{f}} = N_b$ is sufficient in (ii), we take two skew good solutions $(X^i,\mathbb{F}^i)$, $i =1,2$, of Eq. (\ref{eqn:SDE_mvasd_controlled}) with skewness parameter $\nu$ defined on $(\Omega^i, \mathcal{F}^i, \mathbf{P}^i)$, $i =1,2$, which possess the same initial distribution. Via Proposition \ref{proposition:points_controlled}(i) we conclude that $(Y^i,\mathbb{F}^i) := (G(X^i), \mathbb{F}^i)$, $i = 1,2$, are two solutions of Eq. (\ref{eqn:SDE_transformed_eqn}) with identical initial distributions which satisfy (\ref{eqn:SDE_transformed_controlled}). As in the proof of Theorem \ref{theorem:ex_u_un_symmetric_sol} we take a sequence $[a_n,b_n]$, $n \in \mathbb{N}$, which satisfies (\ref{eqn:sequence_of_intervals}). Defining $S^i_n := \inf\{t \geq 0: Y^i_t \notin (a_n, b_n)\}$, $n \in \mathbb{N}$, $i =1,2$, and setting $Y^{i,n}_t := Y^i_{t \wedge S^i_n}$, $t \geq 0$, $n \in \mathbb{N}$, similar to (\ref{eqn:solution_gen_drift_local}) and (\ref{eqn:solution_gen_drift_local_drift_part}), it holds
\[
		Y^{i,n}_t = Y^i_0 + \int_0^t \sigma_n(Y^{i,n}_s) \, \myd B^i_s + \int_\mathbb{R} L_+^{Y^{i,n}}(t,y) \, \nu_n^G(\myd y),
				         \qquad t \geq 0, \ \mathbf{P}^i\text{-a.s.}, \ i=1,2,
\]
where $\sigma_n := \sigma \ind_{(a_n,b_n)}$, $\nu^G_n := \nu^G(\,.\, \cap (a_n,b_n))$ and $B^i$, $i =1,2$, is the corresponding Wiener process. That means, $(Y^{i,n},\mathbb{F})$, $i =1,2$, are solutions to the same equation of type (\ref{eqn:SDE_mvd}) with identical initial distribution. The remaining part of the proof is now accomplished as in the proof of Theorem \ref{theorem:ex_u_un_symmetric_sol} after (\ref{eqn:end_of_proof_uniqueness})
\end{proof}

Similarly to Corollary \ref{cor:ex_u_un_isolated_points}, we can give the following Corollary to Theorem \ref{theorem:ex_skew_sol} and \ref{theorem:ex_and_un_unique_skew_sol} and Proposition \ref{proposition:points_controlled}.
\begin{corollary}
	 Suppose $|F^A \cap [-N,N]| < +\infty$, $N \in \mathbb{N}$. Then the following statements hold. \smallskip
	 
	(i) For every initial distribution there exists a skew solution of Eq. (\ref{eqn:SDE_mvasd}) with skewness parameter $\nu$ if and only if the 
	condition $E_{b/\sqrt{f}} \subseteq N_b$ is satisfied. \smallskip
			 
	(ii) For every initial distribution there exists a unique skew solution of Eq. (\ref{eqn:SDE_mvasd}) with skewness parameter $\nu$ if and only if 
	the condition $E_{b/\sqrt{f}} = N_b$ is satisfied. 
\end{corollary}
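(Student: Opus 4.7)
The plan is to reduce the corollary to the previously proven Theorems \ref{theorem:ex_skew_sol}(ii) and \ref{theorem:ex_and_un_unique_skew_sol}(ii), both of which already treat the case of countable $F$ but require a priori that the solutions considered be \emph{good}. Two observations do all the work.

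First, I would check that the standing hypothesis $|F^A \cap [-N,N]| < +\infty$ for every $N \in \mathbb{N}$ forces $F$ itself to be countable, so that the cited theorems apply. Indeed, under this assumption $F^A$ is locally finite and hence countable, while $F \setminus F^A$ consists entirely of points that are isolated in $F$; any such set of isolated points in a separable metric space is at most countable (assign to each isolated point a rational-endpoint neighbourhood meeting $F$ only in that point). Thus $F = F^A \cup (F \setminus F^A)$ is countable, and the prerequisite of Theorems \ref{theorem:ex_skew_sol}(ii) and \ref{theorem:ex_and_un_unique_skew_sol}(ii) is met.

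Second, Proposition \ref{proposition:points_controlled}(iv) states precisely that, under the same hypothesis $|F^A \cap [-N,N]| < +\infty$, every skew solution of Eq. (\ref{eqn:SDE_mvasd}) with skewness parameter $\nu$ is automatically good. Consequently, in the present setting the classes of skew solutions and of skew good solutions coincide, and likewise existence or uniqueness in law within one class is equivalent to existence or uniqueness in law within the other.

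Combining these two reductions, assertion (i) follows at once from Theorem \ref{theorem:ex_skew_sol}(ii) and assertion (ii) from Theorem \ref{theorem:ex_and_un_unique_skew_sol}(ii). There is no genuine obstacle beyond verifying the countability of $F$; all the substantive work is already contained in Proposition \ref{proposition:points_controlled}(iv), which allowed us to drop the adjective ``good'' from the statement.
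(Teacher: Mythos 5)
Your proposal is correct and follows exactly the route the paper intends: the paper gives no explicit proof, merely stating that the corollary follows from Theorems \ref{theorem:ex_skew_sol}, \ref{theorem:ex_and_un_unique_skew_sol} and Proposition \ref{proposition:points_controlled}, and your argument via Proposition \ref{proposition:points_controlled}(iv) plus the two cited theorems is precisely that reduction. Your explicit verification that $|F^A \cap [-N,N]| < +\infty$ for all $N$ forces $F$ to be countable (so that part (ii) of each theorem applies) is a worthwhile detail the paper leaves implicit.
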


\section*{Appendix}
\setcounter{equation}{0}
\addtocounter{section}{1}
\renewcommand{\theequation}{A.\arabic{equation}}
\noindent Let $(X,\mathbb{F})$ be a stochastic process with values in $(\overline{\mathbb{R}}, \mathscr{B}(\overline{\mathbb{R}}))$ defined on a probability space $(\Omega, \mathcal{F}, \mathbf{P})$ and let $S$ be an $\mathbb{F}$-stopping time. We call $(X,\mathbb{F})$ a semimartingale up to $S$ if there exists an increasing sequence $(S_n)_{n \in \mathbb{N}}$ of $\mathbb{F}$-stopping times such that $S = \lim_{n\rightarrow +\infty} S_n$ and the process $(X^n,\mathbb{F})$ obtained by stopping $(X,\mathbb{F})$ in $S_n$ is a real-valued semimartingale. Analogously, we introduce the notion of a local martingale up to $S$. We notice that if $S=+\infty$ $\mathbf{P}$-a.s., then any semimartingale up to $S$ is a semimartingale and any local martingale up to $S$ is a local martingale.\\
\indent If $(X,\mathbb{F})$ is a semimartingale up to $S$, then we can find a decomposition
\begin{equation}\label{eqn:semi_decomposition}
	X_t = X_0 + M_t + V_t\,, \qquad t < S, \ \mathbf{P}\text{-a.s.},
\end{equation}
where $(M,\mathbb{F})$ is a local martingale up to $S$ with $M_0 = 0$ and $(V, \mathbb{F})$ is a right-continuous process whose paths are of bounded variation on $[0,t]$ for every $t < S$ and with $V_0=0$. If $X$ is continuous on $[0,S)$, then there exists a decomposition such that $M$ and $V$ are continuous on $[0,S)$ and this decomposition is unique on $[0,S)$. For any continuous local martingale $(M, \mathbb{F})$ up to $S$ by $\assPro{M}$ we denote the continuous increasing process, which is uniquely determined on $[0,S)$, such that $(M^2 - \assPro{M}, \mathbb{F})$ is a continuous local martingale up to $S$ and $\assPro{M}_0 = 0$. For a continuous semimartingale $(X,\mathbb{F})$ up to $S$ we set $\assPro{X} = \assPro{M}$, where $M$ is the continuous local martingale up to $S$ in the decomposition (\ref{eqn:semi_decomposition}) of $X$. \\
\indent We recall some facts which are well-known for continuous semimartingales. See for example \cite{revuzyor}, Ch. VI, \S{}1. Their extension to semimartingales up to a stopping time $S$ is obvious. Let $(X,\mathbb{F})$ be a continuous semimartingale up to the $\mathbb{F}$-stopping time $S$. Then there exists the right local time $L_+^X$ which is a function on $[0,S) \times \mathbb{R}$ into $[0,+\infty)$ such that for every real function $f$ which is the difference of convex functions the generalized It\^o formula holds:
\begin{equation}\label{eqn:gen_ito_formula}
	f(X_t) = f(X_0) + \int_0^t f^\prime_- (X_s) \, \myd X_s + \frac{1}{2} \int_0^t L_+^X(t,y) \, \myd f^\prime_+(y)\,, \qquad t < S, \ \mathbf{P}\text{-a.s.} 
\end{equation}
Thereby, $f^\prime_+$ (resp. $f^\prime_-$) denotes the right (resp. left) derivative of $f$. Moreover, there exists a modification of $L_+^X$ which is increasing and continuous in $t$ as well as in $y$ right-continuous with limits from the left and we always use this modification. \\
\indent By $L^X_-$ we denote the left local time given by
\[
	L_-^X(t,y) = L_+^X(t,y-)\,, \qquad t < S, \, y \in \mathbb{R}\,.
\]
For the local times the so-called \emph{occupation times formula}
\begin{equation}\label{eqn:occupationtime}
	\int_0^t g(X_s) \; \myd \assPro{X}_s = \int_\mathbb{R} L_\pm^X(t,y)\, g(y) \; \myd y\,, \qquad t < S, \ \mathbf{P}\mbox{-a.s.}
\end{equation}
holds true for every locally integrable or non-negative measurable function $g$. Moreover, the local times satisfy
\begin{equation}\label{eqn:int_wrt_loc_time}
	\int_0^t \ind_{\{y\}}(X_s) \, L_\pm^X(\myd s,y) = L_\pm^X(t,y)\,, \qquad t < S, \ y \in \mathbb{R}, \ \mathbf{P}\text{-a.s.},
\end{equation}
\begin{equation}\label{eqn:loc_time_and_variation_process}
	L_+^X(t,y) - L_-^X(t,y) = 2 \int_0^t \ind_{\{y\}}(X_s) \, \myd V_s\,, \qquad t < S, \, y \in \mathbb{R}, \ \mathbf{P}\text{-a.s.} 
\end{equation}
and
\begin{equation}\label{eqn:loc_time_zero_outside_compact_interval}
	L_\pm^X(t,y) = 0, \qquad t < S, \, y \notin \left[\min_{0 \leq s \leq t} X_s, \max_{0 \leq s \leq t} X_s\right], \ \mathbf{P}\text{-a.s.}
\end{equation}
\indent The following lemma describes the relation between the local times of a continuous semimartingale $(X,\mathbb{F})$ and of the transformed semimartingale $(f(X),\mathbb{F})$ in case that $f$ is a semimartingale function with certain properties. This lemma is a slight modification of \cite{assingschmidt}, Lemma I.1.18, and the proof of \cite{assingschmidt} can be easily adapted to the situation of the following
\begin{lemma}\label{lemma:conversion_loc_time}
	Let $(X,\mathbb{F})$ be a continuous semimartingale up to $S$. Furthermore, let $f: \overline{\mathbb{R}}	\rightarrow \overline{\mathbb{R}}$ be a 
	function such that its restriction to the interval $I = (r_1,r_2)$, $-\infty \leq r_1 < r_2 \leq +\infty$, is absolutely continuous and strictly 
	increasing:
	\[
		f(x) = f(\text{$x_0$}) + \int_{x_0}^x f^\prime(y) \, \myd y\,, \qquad x \in I,
	\]
	where $x_0 \in I$ is a fixed point. We assume that in every point $y \in I$ the function $f^\prime$ admits  a limit from the right $f^\prime(y+)$ 
	and from the left $f^\prime(y-)$ in $[0,+\infty]$. Moreover, we suppose that $(f(X), \mathbb{F})$ is a continuous semimartingale up 
	to\footnote{$\inf \emptyset = +\infty$}	$\widetilde{S} := S \wedge \inf\{t < S : X_t \notin I \}$ with the property
	\[
		\assPro{f(X)}_t = \int_0^t (f^\prime(X_s\pm))^2 \, \myd \assPro{X}_s, \qquad t<\widetilde{S},\ \mathbf{P}\text{-a.s.}
	\]
	Denoting $N_\pm := \{x \in I : f^\prime(x\pm) = +\infty\}$, then it holds
	\[
		L_\pm^{f(X)}(t,f(y)) = L_\pm^X(t,y) \, f^\prime(y\pm), 
		\qquad t <\widetilde{S},\ y \in I\setminus N_{\pm}, \ \mathbf{P}\mbox{-a.s.}
	\]
\end{lemma}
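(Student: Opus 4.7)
The plan is to combine the occupation times formula (\ref{eqn:occupationtime}) with a change-of-variables argument to first obtain the identity Lebesgue-a.e.\ in $y$, and then use one-sided continuity to upgrade it to every $y \in I \setminus N_\pm$. For any non-negative Borel function $g$ on $\mathbb{R}$, I would chain the occupation times formula for $f(X)$, the hypothesized relation $\myd\assPro{f(X)}_s = (f'(X_s\pm))^2 \, \myd\assPro{X}_s$, and the occupation times formula for $X$ to deduce
\[
  \int_\mathbb{R} g(z) \, L^{f(X)}_\pm(t,z) \, \myd z = \int_I g(f(y)) \, (f'(y\pm))^2 \, L^X_\pm(t,y) \, \myd y,
\]
the last integral being restricted to $I$ because $X_s \in I$ for all $s < \widetilde{S}$, so (\ref{eqn:loc_time_zero_outside_compact_interval}) forces $L^X_\pm(t,\cdot)$ to vanish outside $I$.

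Next I would invoke the change-of-variables formula for the strictly increasing absolutely continuous bijection $f: I \to f(I)$, namely $\int_I \psi(y) f'(y) \, \myd y = \int_{f(I)} \psi(f^{-1}(z)) \, \myd z$ for every non-negative measurable $\psi$. Applied with $\psi(y) := g(f(y))\, f'(y\pm)\, L^X_\pm(t,y)$, and using that $f'(y\pm) = f'(y)$ Lebesgue-a.e.\ in $I$, the right-hand side above rewrites as $\int_{f(I)} g(z) \, f'(f^{-1}(z)\pm) \, L^X_\pm(t,f^{-1}(z)) \, \myd z$. On the other hand, (\ref{eqn:loc_time_zero_outside_compact_interval}) together with $f(X_s) \in f(I)$ for $s < \widetilde{S}$ forces the leftmost expression to be supported in $f(I)$ as well. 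Since $g$ was arbitrary, this gives
\[
  L^{f(X)}_\pm(t, f(y)) = f'(y\pm) \, L^X_\pm(t, y) \qquad \text{for Lebesgue-a.e.\ } y \in I.
\]

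The remaining task is to upgrade this Lebesgue-a.e.\ identity to every $y \in I \setminus N_\pm$. For the ``$+$'' version, both sides are right-continuous in $y$: $L^{f(X)}_+(t,\cdot)$ and $L^X_+(t,\cdot)$ are right-continuous by the standard regularization of semimartingale local times, the composition $y \mapsto L^{f(X)}_+(t,f(y))$ inherits right-continuity from the continuity and strict monotonicity of $f$, and $y \mapsto f'(y+)$ is right-continuous because the hypothesized existence of $\lim_{x \downarrow y} f'(x)$ at every $y \in I$ forces $f'(y_n+) \to f'(y+)$ whenever $y_n \downarrow y$. On $I \setminus N_+$ both sides of the identity are finite, so the a.e.\ identity extends pointwise; the ``$-$'' case is symmetric with left-continuity. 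I expect the most delicate point to be the change-of-variables step, where one must verify that the substitution produces $f'(f^{-1}(z)\pm)$ rather than the a.e.\ density $f'(f^{-1}(z))$ --- this is legitimate precisely because $f'$ coincides with its one-sided limits Lebesgue-a.e., so the exceptional null set is invisible in the integral; once past this point the remaining work is bookkeeping combined with the standard continuity properties of local times.
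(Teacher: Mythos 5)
Your argument is correct, and it is worth pointing out that the paper does not actually prove this lemma: it only remarks that the proof of Lemma I.1.18 in Assing--Schmidt can be adapted. Your chain --- the occupation times formula (\ref{eqn:occupationtime}) for $f(X)$, the hypothesis identifying $\myd\assPro{f(X)}_s$ with $(f^\prime(X_s\pm))^2\,\myd\assPro{X}_s$, the occupation times formula for $X$ restricted to $I$ via (\ref{eqn:loc_time_zero_outside_compact_interval}), and the substitution $z=f(y)$ using that the pushforward of $f^\prime(y)\,\myd y$ under $f$ is Lebesgue measure on $f(I)$ --- is exactly the standard route, and the final upgrade through one-sided limits is precisely where the hypotheses ($f^\prime$ regulated, $y\notin N_\pm$) enter, as you identify. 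One point deserves more care than you give it: the identity obtained from the arbitrariness of $g$ naturally holds for Lebesgue-a.e.\ $z\in f(I)$, and transporting this to ``a.e.\ $y\in I$'' would require $f^{-1}$ to map null sets to null sets, which is not guaranteed here ($f$ is absolutely continuous, but $f^\prime$ may vanish on a set of positive measure, so $f^{-1}$ need not be). This does not damage the proof: a set of full measure in $f(I)$ accumulates at $f(y)$ from the right for every $y$ that is not the right endpoint, so its preimage under the homeomorphism $f^{-1}:f(I)\to I$ accumulates at $y$ from the right, and your right-continuity argument (including $f^\prime(y_n+)\to f^\prime(y+)$ for $y_n\downarrow y$, which is legitimate because a function with one-sided limits everywhere has at most countably many discontinuities) goes through verbatim --- just phrase the approximating sequence in the $z$-variable rather than asserting a co-null set of good $y$'s. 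Finally, to obtain a single $\mathbf{P}$-null set valid for all $t<\widetilde{S}$ simultaneously, establish the a.e.\ identity for rational $t$ first and then use continuity and monotonicity of the local times in $t$; this is routine bookkeeping but should be said.
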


The next theorem is a useful convergence result.
\begin{theorem}\label{theorem:convergence}
	Let $(X,\mathbb{F})$ be a continuous semimartingale up to $S$ with decomposition
	\[
		X_t = X_0 + M_t + V_t, \qquad t < S,\ \mathbf{P}\text{-a.s.}
	\]
	Furthermore, let $f_n$, $n \in \mathbb{N}$, $f$ and $h$ be measurable real-valued functions with
	\[
		\lim\limits_{n \rightarrow \infty} f_n = f \qquad \lambda\text{-a.e.}\footnote{$\lambda$ denotes the Lebesgue measure on $\mathbb{R}$.}
	\]
	and
	\[
		|f_n| \leq h \qquad \lambda\text{-a.e.}, \ n \in \mathbb{N}.
	\]
	Moreover, we assume
	\[
		\int_0^t h^2(X_s) \, \myd \assPro{M}_s < +\infty, \qquad t < S, \ \mathbf{P}\text{-a.s.}
	\]
	Then the stochastic integrals of $f_n(X)$, $n \in \mathbb{N}$, and $f(X)$ with respect to $M$ on $[0,S)$ are well-defined and for every $t \geq 0$ 
	we have  
	\[
		\lim\limits_{n \rightarrow \infty}\sup\limits_{0 \leq s \leq t} \left| \int_0^s f_n(X_u) \, \myd M_u - \int_0^s f(X_u) \, \myd M_u \right| = 0 
		\qquad \text{on } \{t < S\}
	\]
	in probability.
\end{theorem}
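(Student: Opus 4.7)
The plan is to reduce the problem, via the occupation times formula~(\ref{eqn:occupationtime}), to a pointwise Lebesgue convergence question handled by the dominated convergence theorem, and then to upgrade the resulting $L^2$ convergence of integrands to uniform-in-$s$ convergence of the stochastic integrals through the standard combination of It\^o's isometry, Doob's maximal inequality and localization along stopping times announcing $S$.

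I would first observe that, since $\assPro{M}=\assPro{X}$ on $[0,S)$, the occupation times formula gives, for every $t<S$,
\[
	\int_0^t f_n^2(X_s)\,\myd\assPro{M}_s = \int_{\mathbb{R}} f_n^2(y)\,L_+^X(t,y)\,\myd y \leq \int_{\mathbb{R}} h^2(y)\,L_+^X(t,y)\,\myd y = \int_0^t h^2(X_s)\,\myd\assPro{M}_s < +\infty
\]
$\mathbf{P}$-a.s., where the inequality uses $|f_n|\leq h$ $\lambda$-a.e.\ and the absolute continuity of the measure $L_+^X(t,\cdot)\,\myd y$ with respect to Lebesgue measure. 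Passing to the limit shows $|f|\leq h$ $\lambda$-a.e.\ and yields the analogous bound for $f$; hence both stochastic integrals $\int_0^{\cdot} f_n(X_s)\,\myd M_s$ and $\int_0^{\cdot} f(X_s)\,\myd M_s$ are well-defined on $[0,S)$. The same manipulation gives
\[
	\int_0^t (f_n-f)^2(X_s)\,\myd\assPro{M}_s = \int_{\mathbb{R}} (f_n-f)^2(y)\,L_+^X(t,y)\,\myd y,
\]
and since $(f_n-f)^2\to 0$ $\lambda$-a.e., dominated by the uniform majorant $4h^2$ which is integrable against $L_+^X(t,\cdot)\,\myd y$ $\mathbf{P}$-a.s.\ on $\{t<S\}$, Lebesgue's dominated convergence theorem applied $\omega$-wise yields $\int_0^t (f_n-f)^2(X_s)\,\myd\assPro{M}_s \to 0$ $\mathbf{P}$-a.s.\ on $\{t<S\}$.

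To pass from this $L^2$ convergence of integrands to uniform convergence of the stochastic integrals, I would localize along an increasing sequence of $\mathbb{F}$-stopping times $(S_k)_{k\in\mathbb{N}}$ with $S_k\uparrow S$ chosen so that $M^{S_k}$ is a bounded continuous martingale and $\int_0^{S_k}h^2(X_s)\,\myd\assPro{M}_s\leq k$. On $\{t< S_k\}$ the stopped supremum equals $\sup_{0\leq s\leq t\wedge S_k}|\,\cdot\,|$, and Doob's maximal inequality together with It\^o's isometry give
\[
	\mathbf{P}\left(\sup_{0\leq s\leq t\wedge S_k}\bigl|\int_0^s (f_n-f)(X_u)\,\myd M_u\bigr|>\varepsilon\right) \leq \frac{1}{\varepsilon^2}\,\mathbf{E}\int_0^{t\wedge S_k}(f_n-f)^2(X_u)\,\myd\assPro{M}_u,
\]
whose right-hand side tends to zero as $n\to+\infty$ by the bounded convergence theorem, the integrand being uniformly bounded by $4k$ and converging pointwise a.s.\ to zero by the previous paragraph. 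Since $\{t<S_k\}\uparrow\{t<S\}$ $\mathbf{P}$-a.s., an $\varepsilon/\delta$ argument then transfers this to the desired convergence in probability on $\{t<S\}$. The main obstacle is this last step: verifying that the joint localization (for both the bounded-martingale property of $M$ and the integrability of $h^2(X)\,\myd\assPro{M}$) is admissible and that the convergence on the nested events $\{t<S_k\}$ combines cleanly into the conclusion on the random horizon $\{t<S\}$.
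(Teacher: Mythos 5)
Your proposal is correct and follows essentially the same route as the paper: occupation times formula plus dominated convergence to get $\int_0^t (f_n-f)^2(X_s)\,\myd\assPro{M}_s\to 0$ a.s.\ on $\{t<S\}$, then localization along $S_k\uparrow S$ and the estimate $\mathbf{P}(\sup\cdots\cap\{t<S\})\le \mathbf{P}(\sup\cdots\cap\{t<S_k\})+\mathbf{P}(S_k\le t<S)$. The only difference is that the paper delegates the passage from convergence of the quadratic variations to uniform convergence in probability of the stochastic integrals to a citation (Karatzas--Shreve, Prop.\ 3.2.26), whereas you inline that step via Doob's inequality and the It\^o isometry with the additional (admissible) localization $\int_0^{S_k}h^2(X_s)\,\myd\assPro{M}_s\le k$.
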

\begin{proof} We remind of $\assPro{X} = \assPro{M}$. Using the occupation times formula (\ref{eqn:occupationtime}), we get
\[
	\int_0^t f_n^2(X_s) \, \myd \assPro{M}_s \leq \int_0^t h^2(M_s) \, \myd \assPro{X}_s 	< + \infty, 
\]
$t < S, \ \mathbf{P}\text{-a.s.}$ and by observing $|f| \leq h$
\[
	\int_0^t f^2(X_s) \, \myd \assPro{M}_s < + \infty, \qquad t < S, \ \mathbf{P}\text{-a.s.}
\]
This means, as claimed, that the stochastic integrals are well-defined. From our assumptions we now deduce
\[
	\lim\limits_{n \rightarrow \infty} |f_n(y)-f(y)|^2 \, L_+^X(t,y) = 0 \qquad \lambda\text{-a.e.}, \ t < S,
\]
and
\[
	|f_n(y)-f(y)|^2 \, L_+^X(t,y) \leq 4 h^2(y) \, L_+^X(t,y) \qquad \lambda\text{-a.e.}, \ t < S, \ n \in \mathbb{N}.
\]
Furthermore, it holds
\[
	\int_\mathbb{R} 4 h^2(y) \, L_+^X(t,y)\, \myd y = \int_0^t 4 h^2(X_s) \, \myd \assPro{M}_s < +\infty, \qquad t < S, \ \mathbf{P}\text{-a.s.},
\]
where we again applied the occupation times formula (\ref{eqn:occupationtime}). By means of Lebesgue's dominated convergence theorem these observations justify for every $t < S$  $\mathbf{P}\text{-a.s.}$
\begin{equation}\label{eqn:app_1}
	\lim\limits_{n \rightarrow +\infty} \int_0^t \left| f_n(X_s) - f(X_s) \right|^2 \, \myd \assPro{M}_s
			= \lim\limits_{n \rightarrow +\infty}  \int_\mathbb{R} \left| f_n(y) - f(y) \right|^2 \, L_+^X(t,y) \, \myd y 
			= 0\,.
\end{equation}
Let $(S_k)_{k\in \mathbb{N}}$ be an increasing sequence of $\mathbb{F}$-stopping times such that $\lim_{k \rightarrow +\infty} S_k = S$ $\mathbf{P}$-f.s. and for every $k\in \mathbb{N}$ the process $M^k$ obtained by stopping $M$ in $S_k$ is a continuous local martingale. Fixing $t\geq 0$, (\ref{eqn:app_1}) implies for every $k \in \mathbb{N}$
\[
	\lim\limits_{n \rightarrow +\infty} \int_0^t \left| f_n(X_s) - f(X_s) \right|^2 \, \myd \assPro{M^{S_k}}_s 
		=	\lim\limits_{n \rightarrow +\infty} \int_0^{t \wedge S_k} \left| f_n(X_s) - f(X_s) \right|^2 \, \myd \assPro{M}_s = 0
\]
in probability on $\{t < S\}$. Therefore, for arbitrary $\varepsilon > 0$ we conclude
\[\begin{split}
	& \lim_{n \rightarrow +\infty} \mathbf{P} \left( \left\{ \sup\limits_{0 \leq s \leq t} 
																								 \left| \int_0^s f_n(X_u) \, \myd M_u - \int_0^s f(X_u) \, \myd M_u \right| \geq \varepsilon \right\} 
																								 \cap \{t<S\} \right)  \\
	& \phantom{=====} \leq \lim_{n \rightarrow +\infty} \mathbf{P} \left( \left\{ \sup\limits_{0 \leq s \leq t} 
																		\left| \int_0^s f_n(X_u) \, \myd M^{S_k}_u - \int_0^s f(X_u) \, \myd M^{S_k}_u \right| \geq \varepsilon \right\} 
																								 \cap \{t < S_k\} \right) \\
	& \phantom{========================================} + \mathbf{P} \left( \left\{ S_k \leq t < S\right\}\right)\\
	& \phantom{=====} = \mathbf{P} \left( \left\{ S_k \leq t < S\right\}\right)
\end{split}\] 
for all $k \in \mathbb{N}$, where the last equality is justified by \cite{karatzasshreve}, Prop. 3.2.26. Finally, the assertion follows since $\lim_{k \rightarrow +\infty} S_k = S$ $\mathbf{P}$-f.s. 
\end{proof}
 
\bibliographystyle{plain}
\bibliography{references}

\end{document}